\theoremstyle{definition}
\newtheorem{lemma}[equation]{Lemma}
\newtheorem{definition}[equation]{Definition}
\newtheorem{conjecture}[equation]{Conjecture}
\newtheorem{cor}[equation]{Corollary}
\newtheorem{prop}[equation]{Proposition}
\newtheorem{lem}[equation]{Lemma}
\renewcommand{\O}{\mathcal{O}}
\newcommand{\Fb}{\overline{F}}
\newcommand{\ff}[4]{\overset{\alpha_1}{#1}----\overset{\alpha_2}{#2} ==>==\overset{\alpha_3}{#3}----\overset{\alpha_4}{#4}}
\newcommand{\x}[2]{x_{#2}({#1}_{#2})}
\newcommand{\il}{\int\limits}
\newcommand{\ul}{\underline}
\newcommand{\gm}{\gamma}
\newcommand{\wt}{\widetilde}
\newcommand{\on}{\operatorname}
\newcommand{\FC}[1]{^{(#1,\psi_{#1})}}
\newcommand{\bpm}{\begin{pmatrix}}
\newcommand{\epm}{\end{pmatrix}}
\newcommand{\bsm}{\begin{smallmatrix}}
\newcommand{\esm}{\end{smallmatrix}}
\newcommand{\bspm}{\left(\begin{smallmatrix}}
\newcommand{\espm}{\end{smallmatrix}\right)}
\newcommand{\bm}{\begin{matrix}}
\renewcommand{\em}{\end{matrix}}
\newcommand{\bd}{\left|\begin{matrix}}
\newcommand{\ed}{\end{matrix}\right|}
\newcommand{\bbm}{\begin{bmatrix}}
\newcommand{\ebm}{\end{bmatrix}}
\newcommand{\fh}{\mathfrak{h}}
\newcommand{\fsl}{\mathfrak{sl}}
\newcommand{\cH}{\mathcal{H}}
\newcommand{\iFA}{\int\limits_{F\bs \A}}
\newcommand{\iiFA}[1]{\int\limits_{(F\bs\A)^{#1}}}
\newcommand{\bs}{\backslash}
\newcommand{\quo}[1]{#1(F)\bs #1(\A)}
\newcommand{\iq}[1]{\int\limits_{\quo{#1}}}
\newcommand{\C}{\mathbb{C}}
\newcommand{\A}{\mathbb{A}}
\newcommand{\G}{\mathbb{G}}
\newcommand{\Q}{\mathbb{Q}}
\newcommand{\R}{\mathbb{R}}
\newcommand{\Tr}{\operatorname{Tr}}
\newcommand{\rank}{\operatorname{rank}}
\newcommand{\Stab}{\operatorname{Stab}}
\newcommand{\Gal}{\operatorname{Gal}}
\newcommand{\diag}{\operatorname{diag}}
\newcommand{\Spin}{\operatorname{Spin}}
\newcommand{\Lie}{\operatorname{Lie}}
\newcommand{\GSpin}{\operatorname{GSpin}}
\newcommand{\Sp}{\widetilde{Sp}}
\newcommand{\pr}{\operatorname{pr}}
\newcommand{\Ind}{\operatorname{Ind}}
\newcommand{\Mat}{\operatorname{Mat}}
\newcommand{\sym}{\operatorname{sym}}
\newcommand{\Res}{\operatorname{Res}}
\newcommand{\st}{\text{ s.t. }}
\newcommand{\bC}{\wt{C}(\A)}
\newcommand{\bG}{\wt F_4(\A)}
\newcommand{\vph}{\varphi}
\newcommand{\la}{\langle}
\newcommand{\ra}{\rangle}
\newcommand{\ad}{{\on{ad}}}
\begin{document}
\date{}
\title[ Global Constructions]
{\bf Constructions of Global Integrals in the Exceptional Groups }
\author{ David Ginzburg and Joseph Hundley}
\thanks{The first named author is partly supported by a grant from the Israel
Science Foundation no. 2010/10
The second named author was supported  by NSA
Grant MSPF-08Y-172
and NSF Grant DMS-1001792 during the period this work was completed.
}

\address{ School of Mathematical Sciences\\
Sackler Faculty of Exact Sciences\\ Tel-Aviv University, Israel
69978 } \maketitle \baselineskip=18pt
\begin{abstract}
Motivated by known examples of global integrals 
which represent automorphic $L$-functions, this paper
initiates the study of a certain two-dimensional array
of global integrals attached to any reductive 
algebraic group, indexed by maximal parabolic subgroups 
in one direction and by unipotent conjugacy classes 
in the other.  Fourier coefficients attached to unipotent classes, Gelfand-Kirillov dimension of automorphic representations, and an identity which, empirically, appears
to constrain the unfolding process are presented in detail
with examples selected from the exceptional groups.  Two 
new Eulerian integrals are included among these examples.\\Keywords:  Integral representations,  Rankin-Selberg, exceptional groups.\\
{2010 Mathematics Subject Classification.} Primary: 32N10, Secondary: 11F70, 11F30, 
\end{abstract}
\tableofcontents
\section{\bf  Introduction }

One of the important tools in the study of analytic properties of
automorphic $L$ functions is the  method of integral
representations. In this method one seeks to write down a global
integral which depends on a complex variable $s$, show that this
integral converges for $\text{Re}(s)$ large, and has a continuation
to the whole complex plane. Then the goal is to show that the
integral is Eulerian, i.e. that it has an Euler product. In the
literature, when the global integral contains an integration over a
reductive group, this method is known as the Rankin-Selberg method.
In the case when the integral is the Whittaker coefficient of an
Eisenstein series, this method is known as the Langlands-Shahidi
method. From the perspective which is adopted in this work, this division 
is not particularly natural, and hence we prefer to refer simply to 
``the  method of integral representations.''

It is an easy task to write down an integral which has an analytic
continuation. It is also easy to write global integrals which are
Eulerian.  However, to find integrals which satisfy both conditions
is quite hard. 
We are still unaware of a general method of obtaining
all integrals which satisfy both conditions.

In this paper we use ideas which were introduced in \cite{G1} and
\cite{G3} in order to start a certain systematic approach for
constructing examples of global integrals. We do it in the
exceptional groups. There are two main reasons for that. First, the
exceptional groups are not a part of an infinite family of
groups. This enables us to check a finite list of cases. Second, the
exceptional groups are less studied from these aspects than the
classical groups. This gives the hope of possibly finding some
interesting constructions. However, the methods introduced here are
also applicable for the classical groups, and we do hope to study
this issue in the future.

There are two main ingredients in our approach. The first is to use
the classification of unipotent orbits
(i.e., conjugacy classes consisting of unipotent elements)
in order to write down global
integrals. The main idea is as follows. Given a unipotent orbit
$\mathcal{O}$ in an exceptional group $H$, we attach to
it a  parabolic subgroup $P_\O = M_\O U_{\mathcal{O}}$ and a character
$\psi_{U_{\mathcal{O}}}$ defined on $U_{\mathcal{O}}(F)\backslash
U_{\mathcal{O}}({\A })$ (or, in some cases, a subgroup).
The stabilizer of $\psi_{U_\O}$ in $M_\O$ makes sense as an
algebraic group defined over $F,$ and it follows from the results
in  \cite{C}, that the identity component of this
stabilizer  is a reductive group $C$. Let
$E_\tau(h,s)$ denote an Eisenstein series defined on $H({\A}),$
associated with the induced representation $\Ind_{P({\A })}^{H({\A})}\tau\delta_P^s$. See section 4 for details. Let $\pi$ denote an
irreducible cuspidal representation of $C({\A })$. Then, under
certain assumptions on the unipotent orbit $\mathcal{O}$, we can
form the global integral
\begin{equation}\label{intro1}
\int\limits_{C(F)\backslash C(\A )} \int\limits_{U_{\mathcal
{O}}(F)\backslash U_{\mathcal {O}}(\A )}
\varphi_\pi(g)E_\tau(ug,s)\psi_{U_{\mathcal {O}}}(u)\,du\,dg.
\end{equation}
See sections 3 and 4 for details. Because of the cuspidality of
$\pi$, this integral converges for $\text{Re}(s)$ large and has at
least a meromorphic continuation to the whole complex plane. Thus it
clearly satisfies the first requirement we imposed above on our
integrals. Notice that  for  certain unipotent
orbits, the group $C$ is trivial. In this case the above
integral is just the Fourier coefficient of the Eisenstein series.
When $U_{\mathcal {O}}$ is the maximal unipotent subgroup of $H$,
then $\psi_{U_{\mathcal {O}}}$ is a generic character, and our
construction specializes to the Langlands-Shahidi construction.

The next step is to address the problem of when an integral of the
form given in equation \eqref{intro1} is Eulerian. This clearly
depends on the representations $\pi$ and $\tau$. Assume that $\pi$
is a generic representation.
A sufficient condition for the integral \eqref{intro1} to be Eulerian
is that it is equal to an integral which involves the
Whittaker function attached to $\varphi_\pi,$ and some function
in a model of $\tau$ with similar factorization properties, integrated
over a factorizable domain.  We refer to such an
integral as a ``Whittaker integral.''
Experience shows that if
an integral of the type \eqref{intro1} unfolds to Whittaker integral, and is Eulerian, then a certain dimension equality holds. See
\cite{G3} for some more details. Therefore, we restrict to those
integrals which satisfy this equation.  This is all discussed in
detail in section 4. We sketch the main ideas. To each of the
representations $\pi$ and $E_\tau(\cdot,s)$ we attach a number which
we refer to as the Gelfand-Kirillov dimension of the representation.
The identity we require is
$$\text{dim}\ C+\text{dim}\ U_{\mathcal{O}}=\text{dim}\ \pi+\text{dim}\
E_\tau(\cdot,s)$$ This gives us an equation for $\text{dim}\ \tau,$
which then gives us certain information of which representation
$\tau$ to choose. To be clear, as far as we know, every global
Eulerian integral of the type \eqref{intro1},  which unfolds to a
Whittaker coefficient of $\pi$ satisfies the above dimension
identity. However, there are global integrals, which satisfy this
identity but do not unfold to a Whittaker coefficient of $\pi$.
There are many such examples; we obtain some in section \ref{An Example}.
Further, the
question of whether an integral of the type \eqref{intro1} is Eulerian,
and the question of whether it unfolds to a Whittaker coefficient 
of $\pi$ are two related, but separate questions.  An answer to one does
not necessarily determine an answer to the other either way.  

The content of the paper is as follows. After fixing some notations
we show how to attach a set of Fourier coefficients
to a unipotent orbit of an exceptional group. This is done in section 3. In section
4 we construct the global integrals we intend to study and give a
precise description of the basic dimension identity we use. There
are several cases, and we discuss each of them. At the end of section
4 we write down a table of the condition which must be
placed on the  representation $\tau$ in each case, in order for the dimension formula to be  satisfied. We do this for the
exceptional group $F_4$. This condition gives information about the Fourier
coefficients  supported by $\tau.$ It is worthwhile to mention that in four
of the rows in the table, the group $C$ is trivial.  In these cases, the integral
\eqref{intro1} is just a Fourier coefficient of an Eisenstein series.
The integrals
obtained by considering the last row, are the Langlands-Shahidi
integrals for the exceptional group $F_4$.

Section 5 is devoted to the general process of unfolding a global
integral. We carry out this process in detail and as generally as we
can. In this way we also fix some notation which will be of help to us
later on. According to the general outcome, we partition the set of
integrals into various types. Maybe the most interesting
integrals are the ones we refer to as open orbit type
integrals.  See definition \ref{def4}. These are those integrals, such
that after some unfolding, we obtain as inner integration, an
integral of a similar type to the one we started with, and which
satisfies a similar dimension identity. This shows that in some
sense the process is inductive. The study of the global integral is
reduced to the study of a global integral defined on smaller groups.

Starting from section 6 we concentrate on two examples in the group
$F_4$. These examples are typical and represent the general process.
Therefore we study them in detail. In the notations of the table at
the end of section 4, we consider in section 8 the global integral
attached to the unipotent orbit $\mathcal{O}=\widetilde{A}_2$ and in
section 10 the integral attached to the unipotent integral
$\mathcal{O}=A_1$.

As mentioned above, this paper is a starting point towards a certain
systematic study of constructing global integrals, first in the
exceptional groups, and later in classical groups. Clearly there
are many details which need to be addressed. First, in this project we
assume that $\pi$ is generic. However, one can study also the cases
where the representation $\pi$ is cuspidal and not generic.
Secondly, the integrals which we construct here are natural in a
certain sense. But one can also study global integrals, which
satisfy the dimension identity given in section 4, but not of the
type which we construct in this paper. Finally, there is the
interesting question of determining which $L$ functions we obtain
for those integrals which are Eulerian. This is, of course, a local
question, and needs to be addressed by means of local computations.
In the two examples we consider in this paper, we obtain two
examples of such integrals which are new. We hope to study these
topics in the  future.

\section{\bf  Basic notations}
\label{section:   Basic notations}

Let $H$ denote one of the  exceptional groups $F_4$ or $E_i$ for
$i=6,7,8$.
By this we mean the unique split connected simply connected simple algebraic group
of the given type defined over our number field $F.$
We shall label the simple roots of $H$ as follows.
First, for the group $H=F_4$, the Dynkin is given by
$$\overset{\alpha_1}{0}----\overset{\alpha_2}{0} ==>==
\overset{\alpha_3}{0}----\overset{\alpha_4}{0}$$ As for the group
$H=E_8$, we label the simple roots as
\begin{equation}
\setcounter{MaxMatrixCols}{13}
\begin{matrix} \overset{\alpha_1}0&--&\overset{\alpha_3}0&--
&\overset{\alpha_4}0&--&\overset{\alpha_5}0&--&\overset{\alpha_6}0&
--&\overset{\alpha_7}0&--&\overset{\alpha_8}0\\
&&&&|&&&&&&&\\ &&&&\underset{\alpha_2}0&&&&&&&\end{matrix}\notag
\end{equation}
The Dynkin diagram for the groups $H=E_6$ and $H=E_7$, are the
ones derived from the diagram for $E_8$ omitting the relevant
roots.

For construction of several of our integrals, we will need to use
the similitude groups for $E_6$ and $E_7$. We shall denote these
groups by $GE_6$ and $GE_7$, and use the notations defined in
\cite{G2}.

We assume $H$ to be equipped with a
choice of maximal torus $T$ and  Borel subgroup, $B=TU_{\max}^H.$
Here $U_{\max}^H$ is the unipotent radical of $B,$ which is a maximal
unipotent subgroup of $H.$
If $G$ is any $T$-stable subgroup of $H$ we denote the set of roots
of $T$ in $G$ by $\Phi(G,T).$  We also assume $H$ to be equipped with
 a
realization, in the sense of \cite{Springer}, i.e., a choice of isomorphism $x_{\alpha}: \G_a \to U_{\alpha}$
for each root $\alpha\in \Phi(H,T),$ subject to certain compatibility conditions.
Here $U_\alpha$ is the one-dimensional subgroup of
$H$ attached to $\alpha.$
We further assume that the structure constants for this realization
are given as in \cite{Gilkey-Seitz}.

For each root $\alpha,$ the groups $U_\alpha$ and $U_{-\alpha}$
generate a subgroup of $H$ which is isomorphic to $SL_2.$  We
denote this subgroup
by $SL_2^\alpha.$  Likewise, given two roots $\alpha$ and $\beta,$
the
subgroup of $H$ generated by $U_{\pm \alpha}$ and $U_{\pm \beta},$
is denoted by  $SL_3^{\{\alpha, \beta\}}$
if it is isomorphic to $SL_3,$ by $Sp_4^{\{\alpha, \beta\}}$
if it is isomorphic to $Sp_4,$ etc.

We shall denote the maximal parabolic subgroups of $H$ as follows.
Let $P$ be a standard maximal parabolic subgroup of $H$. Let $\alpha_i$
denote the unique simple root of $H$ such that the one parameter
unipotent subgroup $x_{\alpha_i}(r)$ is in the unipotent radical
of $P$. We shall then denote $P$ by $P_i$.  We also denote
the unipotent radical by $U_i$ and the standard Levi subgroup
by $M_i.$

The choice of realization allows us to work with elements of $H$ via
explicit generators and relations.  In this approach we shall refer to
specific roots of $H$ very often.  If $\alpha_1, \dots, \alpha_r$
are the simple roots of $H,$ we identify the root $\sum_{i=1}^r n_i \alpha_i$
with the tuple $(n_1, \dots, n_r).$  Since none of the coefficients $n_i$
ever exceeds $9,$ we shall normally simply run the digits together.
For example, the root $2\alpha_1+3\alpha_2 + 4\alpha_3 + 2\alpha_4$
of $F_4$ will be denoted $2342$ or $(2342).$

\subsubsection{Some conventions for defining characters}  \label{sss:conventions for defining characters}  Suppose that $U$ is a unipotent group and
we need to define a character of $U.$  This will occur very frequently in this work.  We introduce two convenient notational conventions for this, at least in the
case when $U$ is $T$-stable, in which case it is
 a product of groups $U_\alpha.$
Assume that this is so.
 Fix a total ordering
on $\Phi(U,T).$  Then we may define an isomorphism of varieties $\G_a^{\dim U} \to U$ by
$$
(u_\alpha)_{\alpha \in \Phi(U,T)}
\mapsto \prod_{\alpha \in \Phi(U,T)} x_\alpha(u_\alpha),
$$
with the product defined using our fixed total ordering.
This in effect defines ``coordinates'' $(u_\alpha)_{\alpha \in \Phi(U,T)}$ on $U.$
Further, the $\alpha$-coordinate $u_\alpha$ of $u \in U$
depends on the choice of total ordering only if $U_\alpha$ lies in the commutator subgroup of $U.$  In particular, characters may be written using these coordinates without ambiguity, even when the total ordering has not been specified.  For example, one can define a character of the maximal unipotent subgroup of $F_4$ by
$\psi_{U_{\max}}(u) = \psi( u_{0100}+u_{0010}+u_{0001}).$
Note that $u_{1000}, u_{0100},u_{0010}$ and $u_{0001}$
are the only coordinates of $u \in U_{\max}$ which are well defined, independently of the total ordering chosen.  On the other hand, they are also the only coordinates on which a character of $U_{\max}$ may depend.
Finally, note that while these coordinates are independent of the choice of total order, they are very much dependent on the choice of realization $\{ x_\alpha: \alpha \in \Phi(H,T)\}.$  But the realization has been fixed once and for all, so this is no cause for concern.

Sometimes a different convention is more convenient. If we write
$$
\psi_U( x_{\beta_1}(r_1) x_{\beta_2}(r_2) u') = \psi(r_1+r_2),
$$
or the like, the convention is that the restriction of $U$ to $U_\alpha$ is trivial for all $\alpha \in \Phi(U,T)$ except for those listed (here $\beta_1,$ and $\beta_2$).  Also $u'$ is an element of the product of the groups $U_\alpha, \alpha \in \Phi(U,T)$ with the listed roots excluded.

\section{\bf  Construction of Fourier Coefficients}
\label{section: construction of fourier coefficients}

In this section we shall explain how to construct a Fourier coefficient
from a weighted diagram corresponding to a unipotent orbit.
Our construction is essentially a slight refinement of the one given in
\cite{G-R-S1}, and applies to any split connected reductive
algebraic group.
Our main
interest are constructions in the exceptional groups, however this
setup holds for classical groups as well.
Let $H$ denote a split connected reductive
algebraic group.

We shall work in characteristic zero.  Consequently, we may pass freely back and forth between unipotent
orbits in a reductive group and nilpotent orbits in its Lie algebra.  Indeed, the exponential map gives an equivariant isomorphism between the nilpotent and unipotent subvarieties.  We may also pass back and forth between adjoint orbits in the Lie algebra and coadjoint orbits in its dual, as described in \cite{C-M}, section 1.3.

We use the classification results
described in \cite{C}, particularly the tables of orbits which begin on page 401, as well as the Bala-Carter labels used therein.
For the list of dimensions of these
orbits, we use \cite{C-M} page 128.   A well known result of Dynkin which is described in section 5.6 of \cite{C} associates  with any unipotent orbit a Dynkin diagram
whose roots are labeled with zeros, ones and twos,
which determines the orbit completely.
For exceptional groups, the Bala-Carter label or the weighted Dynkin diagram is the most common method of specifying an orbit.  For classical
groups it is more common to use partitions as in \cite{G-R-S1}.  However,
the parametrization via weighted Dynkin diagrams is valid in general and,
indeed, the first part of the construction given in \cite{G-R-S1} amounts
to recovering the weighted Dynkin diagram attached to an orbit
from the partition.

Let  ${\mathcal O}$ denote a unipotent orbit for the group $H$.  (We shall identify each algebraic group which we consider with its $\Fb$ points where $\Fb$ is a fixed algebraic closure of $F.$)
First, we shall associate to ${\mathcal O}$ a parabolic subgroup of
$H$ which we shall denote by $P_{{\mathcal O}}$. We shall write its
Levi decomposition $P_{{\mathcal O}}=M_{{\mathcal O}}U_{{\mathcal O}}$.
Here $M_{{\mathcal O}}$ is the standard Levi subgroup
of $P_{{\mathcal O}}$ and
$U_{{\mathcal O}}$ is its unipotent radical.

The  parabolic subgroup $P_{{\mathcal O}}$ is defined to be the standard
parabolic subgroup of $H$ whose Levi part is generated by the
maximal torus of $H$, and all copies of $SL_2$ corresponding to
those simple roots which are labeled by zero in the diagram. For
example, in the group $H=E_6$ the diagram corresponding to the
unipotent class ${\mathcal O}=2A_2$ is given by
\begin{equation}\label{2a2}
\begin{matrix} \overset{2}0&--&\overset{}0&--
&\overset{}0&--&\overset{}0&--&\overset{2}0\\
&&&&|&&&&\\ &&&&\underset{}0&&&& \end{matrix}
\end{equation}
Here and throughout, roots without a number are considered as labeled with
the number zero. In other words, the roots
$\alpha_2,\alpha_3,\alpha_4$ and $\alpha_5$ are labeled with the
number zero. Thus, in this case $M_{{\mathcal O}}=\Spin_8\cdot
GL_1^2$.

As a second example, consider in $H=F_4$ the unipotent class
${\mathcal O}=B_2$. Its diagram is given by
$$\overset{2}{0}----\overset{}{0} ==>==
\overset{}{0}----\overset{1}{0}$$ Therefore, the Levi part of
$P_{{\mathcal O}}$ is given by $M_{{\mathcal O}}=Sp_4\cdot GL_1^2$.

Since $P_\O$ is standard,  $U_{{\mathcal O}}$  is generated by one dimensional
unipotent subgroups $x_\alpha(r)$,  where $\alpha$ is positive.
For example, in the case of diagram \eqref{2a2}, the unipotent
group $U_{{\mathcal O}}$ is generated by all $x_\alpha(r)$ such that
if we write $\alpha=\sum_{i=1}^6n_i\alpha_i$, then either $n_1>0$ or
$n_6>0$. Thus $\dim U_{{\mathcal O}}=48$.

Let $\psi$ denote a nontrivial character of $F\backslash \A$.
To the unipotent orbit ${\mathcal O}$ we shall associate  a
character $\psi_{U_{{\mathcal O}}}$ defined on a subgroup of
$U_{{\mathcal O}}$.   In the case of the classical group this
procedure is explained in detail in \cite{G1} section two. For
the exceptional groups this is done in a similar way. To describe the
construction, we fix some notations.

We will say that the root $\alpha$ is in $U_{{\mathcal O}}$ if the
one parameter unipotent subgroup $x_\alpha(r)$ is in $U_\O$. Assume that the simple roots
$\alpha_{i_1},\ldots,\alpha_{i_r}$ are the simple roots which are
labeled zero in the diagram corresponding to the unipotent orbit
${\mathcal O}$. By definition, a positive root
$\alpha=\sum_in_i\alpha_i$ is in  $U_{{\mathcal O}}$ if and only if
$n_{i_1}+\ldots +n_{i_r}>0$. For each $k>0$ we set
$U^{(k)}_{\mathcal O}$ equal to the product of the groups $U_\alpha$ for $\alpha$ in the set $\{ \alpha=\sum_in_i\alpha_i\ \ :  \ n_{i_1}+\ldots
+n_{i_r}\ge k\}.$ It is easy to see that $U^{(k)}_{\mathcal O}$ is a
group and that $L_\O^{(k)}=U^{(k)}_{\mathcal O}/U^{(k+1)}_{\mathcal O}$
is an abelian group.
We also define $V_\O^{(k)}$ as the product of the groups
$U_\alpha$ corresponding to those roots
$\alpha= \sum_i n_i \alpha_i$ with $\sum_i c_i n_i \ge k.$
Here,  $c_i \in \{0,1,2\}$ is the
weight attached to the simple root $\alpha_i$ in the weighted Dynkin
diagram of $\O.$
We consider the group $V_\O^{(2)}/ V_\O^{(3)}.$  It may equal $L_\O^{(1)},$
or $L_\O^{(2)},$ or some combination of the two.
The group $M_\O$ acts on this group with an open orbit.
We say that an element of $L_\O^{(2)}$ is ``in general position'' if it is in this orbit.
 See proposition 5.7.3 of
\cite{C} for details. It follows from proposition  5.5.9, p. 156 of \cite{C}
that the identity component of
the stabilizer inside $M_{{\mathcal O}}$ of a representative of the
open orbit is a reductive group.

To define the character $\psi_{U_{{\mathcal O}}}$, assume first that
the diagram attached to ${\mathcal O}$ has only zeros and twos. In
this case, $V_\O^{(2)}/V_\O^{(3)}= L_\O^{(1)},$ and
$\psi_{U_{{\mathcal O}}}$ will be a character of $U_\O$ itself.
We consider the action of $M_{{\mathcal O}}$ on the group
$L_\O^{(1)}$.
This action is essentially a rational representation of $M_\O$ and the $F$-points of the dual rational representation are identified with  the set of all characters
of $U_\O$ by our choice of $\psi.$
As explained above, over an algebraically closed
field, the action of $M_\O$ on $L^{(1)}_\O$ has an open orbit, and the identity component of
the stabilizer of any element of this orbit is reductive.  The same is true of the action of $M_\O$ on the dual rational
representation, which we denote $L^{(1),*}_\O.$  However, the action of $M_\O(F)$ on $L^{(1),*}_\O(F)$ need not have an open orbit.  Indeed, the intersection of $L^{(1),*}_\O(F)$ with the open orbit in $L^{(1)}_\O$ may be a union of several $M_\O(F)$ orbits, with stabilizers which are isomorphic to one another only over the closure and not over $F.$
This issue is discussed in \cite{G1} in
example 2 page 328.
For $\psi_{U_\O}$ we choose a character which is
in the open orbit.
Following the tables given in \cite{C} page 401,
we shall denote the connected component of the stabilizer  by  $C$.

Let $\varphi$ denote an automorphic form defined on the group $H(\A),$ It
follows from the above discussion that the following Fourier coefficient
\begin{equation}\label{int1}
\varphi^{(U_\O, \psi_{U_\O})}(g):=
\int\limits_{U_{{\mathcal O}}(F)\backslash U_{{\mathcal O}}(\A)}
\varphi(ug)\psi_{U_{{\mathcal O}}}(u)du
\qquad( g \in C(\A))
\end{equation}
defines a  left-$C(F)$-invariant function $C(\A) \to \C.$

\begin{lem}
The function $\varphi\FC{U_\O}$ is smooth, and $\varphi\FC{U_\O}$
and all of its derivatives have moderate growth.
\end{lem}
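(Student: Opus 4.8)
The plan is to deduce smoothness and moderate growth of the Fourier coefficient $\varphi\FC{U_\O}$ directly from the corresponding properties of the automorphic form $\varphi$ on $H(\A)$, using that $U_\O(F)\bs U_\O(\A)$ is compact. Recall that by definition an automorphic form $\varphi$ on $H(\A)$ is smooth (in particular $K$-finite and $\mathcal Z$-finite at the archimedean places, and right-invariant under an open compact subgroup at the finite places), and that $\varphi$ together with all of its derivatives $X\varphi$ (for $X$ in the universal enveloping algebra of the Lie algebra of $H(F_\infty)$) is of moderate growth: there are constants $c, N$ with $|X\varphi(g)| \le c\,\|g\|^N$ for a fixed height function $\|\cdot\|$ on $H(\A)$.

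First I would address smoothness and differentiation under the integral sign. Since $U_\O(F)\bs U_\O(\A)$ is compact, the integral defining $\varphi\FC{U_\O}(g)$ is taken over a compact domain, and the integrand $u \mapsto \varphi(ug)\psi_{U_\O}(u)$ is continuous; moreover $\varphi$ is right-smooth, so for fixed $u$ the map $g \mapsto \varphi(ug)$ is smooth, with derivatives $X\mapsto (X\varphi)(ug)$ depending continuously on $(u,g)$. Standard results on differentiation of parameter integrals over a compact set (dominated convergence, using that the derivatives are jointly continuous hence locally bounded on the compact $u$-domain) then show $\varphi\FC{U_\O}$ is smooth on $C(\A)$ and that
$$
X\,\varphi\FC{U_\O}(g) = \int\limits_{U_\O(F)\bs U_\O(\A)} (X\varphi)(ug)\,\psi_{U_\O}(u)\,du
$$
for $X$ in the enveloping algebra at the archimedean places; at the finite places right-invariance under a fixed open compact subgroup is inherited immediately. (One should note that differentiating in the variable $g \in C(\A)$ only involves $X$ in the Lie algebra of $C$, a subalgebra of that of $H$, so the formula above makes sense.)

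Next I would bound the growth. Fix $X$; applying the height estimate for $X\varphi$ inside the integral gives
$$
\left| X\,\varphi\FC{U_\O}(g) \right| \le \int\limits_{U_\O(F)\bs U_\O(\A)} |(X\varphi)(ug)|\,du \le c \int\limits_{U_\O(F)\bs U_\O(\A)} \|ug\|^{N}\,du.
$$
Using the submultiplicativity of the height, $\|ug\| \le \|u\|\,\|g\|$, and the fact that $u$ ranges over the compact set $U_\O(F)\bs U_\O(\A)$ — so that $\|u\|$ is bounded by some constant $c'$ depending only on a chosen set of representatives — one obtains $|X\,\varphi\FC{U_\O}(g)| \le c\,(c')^N\,\Vol(U_\O(F)\bs U_\O(\A))\,\|g\|^N$. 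Since $C(\A) \subseteq H(\A)$, restricting the height function $\|\cdot\|$ to $C(\A)$ gives a height function on $C(\A)$ (comparable to any intrinsic one by general comparison of heights on an algebraic subgroup), so this is exactly the moderate growth bound on $C(\A)$. The left $C(F)$-invariance was already established in the text preceding the statement.

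The main obstacle, such as it is, is purely a matter of care rather than depth: one must justify interchanging differentiation and integration (which is where joint continuity of the derivatives of $\varphi$ and compactness of $U_\O(F)\bs U_\O(\A)$ are used) and one must be slightly careful that the height comparisons are applied to the subgroup $C(\A)$ rather than all of $H(\A)$. Both points are standard; the compactness of the unipotent quotient is what makes everything routine, in contrast to the analogous statement for integrals over non-compact quotients.
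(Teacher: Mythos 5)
Your proof is correct and follows essentially the same approach as the paper's: compactness of $U_\O(F)\bs U_\O(\A)$, differentiation under the integral sign, and moderate growth inherited via the embedding $C\hookrightarrow H \hookrightarrow GL_N$ together with submultiplicativity of the height. The paper's proof is just a terser version of the same argument, leaving these routine verifications to the reader.
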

\begin{proof}
We define ``smooth''
and ``moderate growth'' as in \cite{MW}, section I.2.5.
We are given that $\varphi$ is smooth.  Since $\varphi\FC{U_\O}$
is obtained by integrating $\varphi$ over a compact set
against a smooth, bounded function, it is clear that $\varphi\FC{U_\O}$
inherits this property from $\varphi.$

We are also given that $\varphi$ is of moderate growth.  This is defined
using a choice of embedding $H \hookrightarrow GL_N$
for some $N$ (though the condition obtained is independent
of the choice of embedding).  If we then define
``moderate growth for $C$ using the embedding $C \hookrightarrow H \hookrightarrow GL_N,$ then it is clear that $\varphi\FC{U_\O}$ inherits this
property as well.
\end{proof}
We remark that $\vph\FC{U_\O}$ also inherits the $K$-finiteness property
of the automorphic form $\vph,$ provided that the maximal compact
subgroups of $H(F_\infty)$ and $C(F_\infty)$ are chosen
compatibly, but that it will \ul{not}, in general be $\frak z$-finite, and therefore
that it will not, in general, be an automorphic form.
(Here, $\frak z$ indicates the center of the universal enveloping algebra
of $\Lie( C( F_\infty)).$)  We elaborate on this remark slightly in an appendix.

We consider a few examples. First, let $H=E_6$ and ${\mathcal
O}=2A_2$. The roots in $L^{(1)}_\O$ are all those roots
$\alpha=\sum_{i=1}^6n_i\alpha_i$ such that $n_1+n_6=1$. There are 16
such roots. The group $M_{{\mathcal O}}=\Spin_8\cdot GL_1^2$ acts on
this group. The representation is reducible, and up to the action of
$GL_1^2$ it is equal to two of the three eight dimensional
representations of $\Spin_8$. It follows from \cite{C} that the
stabilizer of the open orbit is the exceptional group $G_2$. For
$u\in U_{{\mathcal O}}$ define
$$\psi_{U_{{\mathcal O}}}(u)=\psi_{U_{{\mathcal O}}}(x_{111100}(r_1)x_{101110}(r_2)
x_{010111}(r_3)x_{001111}(r_4)u')=\psi(r_1+r_2+r_3+r_4)$$ where
$u'\in U_{{\mathcal O}}$ is any  product of unipotent elements
$x_\alpha(r)$
 in $U_{{\mathcal O}}$ such that $\alpha$ is not one of the above four roots. Then the stabilizer inside
$M_{{\mathcal O}}$ is the exceptional group $G_2$.

As an another example, consider in $E_8$, the unipotent class ${\mathcal O}=D_4$. Its
diagram is given by
\begin{equation}
\setcounter{MaxMatrixCols}{13}
\begin{matrix} \overset{}0&--&\overset{}0&--
&\overset{}0&--&\overset{}0&--&\overset{}0&--&\overset{2}0&--&\overset{2}0\\
&&&&|&&&&&&&\\ &&&&\underset{}0&&&&&&&\end{matrix}\notag
\end{equation}
In this case $L^{(1)}_\O$ consists of all roots
$\alpha=\sum_{i=1}^8n_i\alpha_i$ such that $n_7+n_8=1$. There are 28
such roots. The action of  $M_{{\mathcal O}}=E_6\cdot GL_1^2$ on the
roots in $L^{(1)}_\O$ gives a sum of two irreducible
representations. First, on $x_{\alpha_8}(r)$ we obtain a one
dimensional representation of $M_{{\mathcal O}}$. The second
representation corresponds to the 27 roots
$\alpha=\sum_{i=1}^8n_i\alpha_i$ such that $n_7=1$ and $n_8=0$, is
the 27 dimensional representation of $E_6$.  If we define the
character
$$\psi_{U_{{\mathcal O}}}(u)=\psi_{U_{{\mathcal O}}}(x_{00000001}(r_1)x_{11221110}(r_2)
x_{11122110}(r_3)x_{01122210}(r_4)u')=\psi(r_1+r_2+r_3+r_4)$$
then the stabilizer in $M_{{\mathcal O}}$ is the exceptional group $F_4$.

As a final example of this type, suppose that the diagram attached
to ${\mathcal O}$ contains only twos and no zeros. Then
$M_{{\mathcal O}}$ is the torus of the group $H$ and $U_\O$ is the maximal unipotent subgroup of $H$. Therefore, in this
case we may take $\psi_{U_{{\mathcal O}}}$ the standard generic
 character corresponding to our chosen realization. In other words, if $\alpha_i$ are the simple roots of
$H$, then
$$\psi_{U_{{\mathcal O}}}(u)=\psi_{U_{{\mathcal O}}}
(x_{\alpha_1}(r_1)\ldots x_{\alpha_m}(r_m)u')=\psi(r_1+\cdots
+r_m).$$

Next, we consider the case when the diagram attached to the
unipotent orbit contains only zeros and ones. In  this case we
define the character on the unipotent subgroup $U^{(2)}_\O$.
More precisely, we consider the action of $M_{{\mathcal O}}$ on the
group $L^{(2)}_\O$, and we define the character
$\psi_{U^{(2)}_\O}$  in a similar way as we defined the
character $\psi_{U_{{\mathcal O}}}$ in the previous case.

As a first example we consider in the group $H=E_6$ the unipotent
orbit ${\mathcal O}=A_1$. The diagram which corresponds to this
unipotent class is
\begin{equation}\label{a1}
\begin{matrix} \overset{}0&--&\overset{}0&--
&\overset{}0&--&\overset{}0&--&\overset{}0.\\
&&&&|&&&&\\ &&&&\underset{1}0&&&& \end{matrix}
\end{equation}
In this case, the group $U_{{\mathcal O}}$ is a Heisenberg group, and
the group $L^{(2)}_\O$ has only one root, which is
$(122321)$. Therefore, the character we attach to this unipotent
class is given by $\psi_{U^{(2)}_\O}(x_{122321}(r))=\psi(r)$. In this case, the connected component
of the stabilizer $C$ is a group of type $A_5$. As in integral
\eqref{int1}, given an automorphic  representation $\sigma$ of
$E_6(\A)$, we can define the Fourier coefficient which
corresponds to this orbit by
\begin{equation}\label{int2}
\int\limits_{U^{(2)}_\O(F)\backslash U^{(2)}_\O(\A)}
\varphi_\sigma(ug)\psi_{U^{(2)}_\O}(u)du.
\end{equation}
This Fourier coefficient defines an automorphic function defined
on the group $C(\A)$.

As an another example, consider the unipotent class  ${\mathcal O}=\widetilde{A}_1$
in $H=F_4$. Its diagram is
\begin{equation}\label{tila1}
\overset{}{0}----\overset{}{0} ==>== \overset{}{0}----\overset{1}{0}.
\end{equation}
In this case the group  $U^{(2)}_\O$ consists of the seven
roots $\alpha=\sum_{i=1}^4n_i\alpha_i$ in $H$ such $n_4=2$. If we
define the character
$$\psi_{U^{(2)}_\O}(u)=\psi_{U^{(2)}_\O}(x_{1232}(r)u')=\psi(r),$$
then the connected component of the stabilizer inside $M_\O$ is the group $\Spin_6$.

Returning to the general case of diagrams which contains zeros and
ones only, it will be convenient to extend the unipotent group we
integrate over. This extension, which involves the theta
representation defined on the double cover of a suitable symplectic
group, is discussed in \cite{G-R-S1} section 1 in detail for
unipotent orbits of the symplectic group. In the exceptional groups
it is exactly the same. The group $U_\O/U^{(3)}_\O$ has the structure of a generalized Heisenberg
group.
(By this we mean a group which has a projection onto a Heisenberg group
such that the kernel is contained in the center.  See section \ref{section: dealing with theta functions} for a more detailed definition.)
This is a general phenomenon, which we explain briefly in section \ref{section: dealing with theta functions}. For example, in the case of the unipotent
class $A_1$ in $E_6$, the group $U^{(3)}_\O$ is trivial, and
$U_{{\mathcal O}}$ is the Heisenberg group with 21 variables. The action of  the group $C$ on $U_\O$ by conjugation
therefore induces a homomorphism
into
certain symplectic group. In the above example, it is the group
$Sp_{20}$.

In general there is a projection map $l : U_\O/U^{(3)}_\O\to {\mathcal H}_m$ where ${\mathcal H}_m$ is a
Heisenberg group with $m$ variables. Here
$m = \dim L_\O^{(1)} +1.$
As an example to this phenomena, consider in $H=F_4$, the unipotent
class ${\mathcal O}=A_1+\widetilde{A}_1$. Its diagram is given by
\begin{equation}
\overset{}{0}----\overset{1}{0}
==>==\overset{}{0}----\overset{}{0}\notag
\end{equation}
In this case, the roots in $L^{(1)}_\O$ are all twelve roots
$\alpha=\sum_{i=1}^4n_i\alpha_i$ such that $n_2=1$, and the roots in
$L^{(2)}_\O$ are the six roots with $n_2=2$. The group
$U_{{\mathcal O}}/U^{(3)}_\O$ is a  generalized
Heisenberg group, and in this case we define $l : U_\O/U^{(3)}_\O\to {\mathcal H}_{13}$ as follows.
Recall that $\cH_{13}$ is $\G_a^6 \times \G_a^6 \times \G_a$
equipped with the operation
$$(x_1|y_1|z_1)\cdot (x_2|y_2|z_2)
 = \left(x_1+x_2\;\left|\;y_1+y_2\;\left|\;z_1+z_2 + \frac
 12 (x_1 \cdot\,_t y_2 - y_1 \cdot \, _tx_2) \right.\right.\right).$$
 Here $x_1, y_, x_2, y_2 \in \G_a^6,$ realized as row vectors, and $_t$ denotes the lower transpose:
 $$
 _t(x_1 \dots x_n) = \bpm x_n \\\vdots \\x_1 \epm.
 $$
 Also $z_1, z_2 \in \G_a.$  (We separate the components of $\cH_{13}$
 with vertical bars to aid legibility in computations where individual
 entries in the row vectors are written out.)

 The mapping of
$L^{(2)}_\O$ onto the center of ${\mathcal H}_{13}$
should be an element of $L_\O^{(2),*}$ (the $M_\O$-module
dual to $L_\O^{(2)}$) in general position.
One option is as follows. For any $u'\in U^{(3)}_\O$ we have
$$l(x_{1220}(r_1)x_{1221}(r_2)x_{1222}(r_3)x_{1231}(r_4)x_{1232}(r_5)
x_{1242}(r_6)u')=(0|0|r_3+r_4)$$ It is not hard to check that the
stabilizer inside $M_{{\mathcal O}}$ is a group of type $A_1+A_1$ as
indicated in the table in \cite{C}.

In conjunction with the commutator map $L_\O^{(1)} \times L_\O^{(1)}
\to L_\O^{(2)},$ the linear form on $L_\O^{(2)}$ which has been
chosen determines a skew-symmetric form on $L_\O^{(1)} \times L_\O^{(1)}.$  To extend it to a projection $U_\O \to \cH_{13},$ one needs an isomorphism $L_\O^{(1)} \to \G_a^6 \times \G_a^6$ such that the
preimage of $\{(x|0|0): x \in \G_a^6\}$ in $L_\O^{(1)}$ is isotropic with
respect to this skew-symmetric form, and so is the preimage of $\{(0|y|0): y \in \G_a^6\}.$

 In the case at hand,  one might take
$$l(x_{0100}(m_1)x_{1100}(m_2)x_{0110}(m_3)x_{1110}(m_4)x_{0111}(m_5)x_{0120}(m_6))=(X|0|0)$$
where $X=(m_1,m_2,\ldots,m_6)\in \G_a^6.$
Then $l$ will map
$$
\{x_{1120}(m_1)x_{1111}(m_2)x_{0121}(m_3)x_{1121}(m_4)x_{0122}(m_5)x_{1122}(m_6): m_1, \dots, m_6\}
$$
isomorphically onto $\{(0|Y|0): Y \in \G_a^6\}\subset \cH_{13}.$
The precise isomorphism will depend on the
structure constants of $F_4.$  This issue is discussed in more detail
later on in section \ref{section: dealing with theta functions}.

Returning to the general case, having fixed a projection map $l : U_\O/U^{(3)}_\O\to {\mathcal H}_{2k+1}$ we get  a homomorphism of the
group $C$ into $Sp_{2k}$.
This is because $Sp_{2k}$ may be identified with the group of automorphisms of $\cH_{2k+1}$ which restrict to the identity on the center,
and because the action of $C$ on $U_\O$ will be given by
such automorphisms.

Let $\Theta^\psi_{k}$ denote the theta
representation of $\cH_{2k+1}\rtimes\widetilde{Sp}_{2k}(\A)$. Then the above
discussion indicates that the following integral
\begin{equation}\label{int3}
\int\limits_{U_{{\mathcal O}}(F)\backslash U_{{\mathcal O}}({\A})}\theta_\phi^\psi(l(u)g) \varphi_\sigma(ug)du
\end{equation}
is well defined and is left invariant under $g\in C(F)$. Here
$\theta_\phi^\psi$ is a vector in the space of $\Theta^\psi_{k}$.
Depending on whether the group $C(\A)$ splits inside
$\widetilde{Sp}_{2k}(\A)$, integral \eqref{int3} is a well
defined function on either $C(\A)$ or the metaplectic double cover $\widetilde{C}(\A)$. Notice
the difference between integrals \eqref{int2} and \eqref{int3}. Clearly
 the domain of integration is different.  But more importantly,
integral \eqref{int3} will sometimes define an automorphic function on the group  $C({\A}),$
and other times will define a genuine automorphic function on the
metaplectic
  double cover of $C(\A),$ whereas integral
\eqref{int2} always defines an automorphic function of $C(\A).$ As we will work only with integrals of the
type \eqref{int3}, this will be important for our construction.

The final case to consider is the case when the diagram which
corresponds to the given unipotent orbit ${\mathcal O}$ consists of
zeros, ones and twos. In this case we combine the above two cases.
In order to explain this, recall the group $V_\O^{(2)}/V_\O^{(3)}.$
 In the case when the weighted diagram has only zeros and twos, it is $L^{(1)}_\O.$  In the case when
there are only zeros and ones, it is $L^{(2)}_\O.$  If both ones and twos are
present, it contains a part of each of these groups.  The group $U_\O/V^{(3)}_\O$ is the direct product of the abelian group $(V^{(2)}_\O/V^{(3)}_\O) \cap L^{(1)}_\O,$ and a generalized Heisenberg group with center $(V^{(2)}_\O/V^{(3)}_\O)\cap L^{(2)}_\O.$
Define projections $l: U_\O \to \cH_{2n+1}$
and $l': U_\O \to (V^{(2)}_\O/V^{(3)}_\O) \cap L^{(1)}_\O.$
Here $2n$ is the dimension of $V^{(1)}_\O/V^{(2)}_\O.$

 As in \cite{G-R-S1} equation (1.3), the
corresponding Fourier coefficient is then given by
\begin{equation}\label{int31}
\int\limits_{U_{{\mathcal O}}(F)\backslash U_{{\mathcal O}}({\A})}\theta_\phi^\psi(l(u)g) \varphi_\sigma(ug) \psi_{U_\O}(l'(u))du
\end{equation}
Here  $\theta_\phi^\psi$ is a vector in the theta representation
defined on the suitable symplectic group.
As before we have a character of $\quo{V_\O^{(2)}},$ which, in this
case is a combination of $\psi_{U_\O}$ and
the central character of $\Theta^\psi_n,$ defined on the group
 $(V^{(2)}_\O/V^{(3)}_\O)\cap L^{(2)}_\O$ using the projection $l.$
 This character corresponds to an
 element of the $M_\O$-module dual to
 $V_\O^{(2)}/ V_\O^{(3)},$ and must be in the open orbit for the $M_\O$-action.

As an example, consider in $H=F_4$ the unipotent class ${\mathcal
O}=B_2$. Its diagram is given by
$$\overset{2}{0}----\overset{}{0} ==>==
\overset{}{0}----\overset{1}{0}$$ As was mentioned above, the Levi
part of $P_{{\mathcal O}}$ is given by $M_{{\mathcal O}}=Sp_4\cdot
GL_1^2$. In this case, the action of $M_\O$ on  $L^{(1)}_\O$ decomposes as a direct sum of two irreducible subrepresentations.
One subrepresentation corresponds to the subgroup of $M_\O$ on which $T$ acts with the roots
 $(1000), (1100), (1110), (1120), (1220),$ and
 the other corresponds to the subgroup of $M_\O$ on which $T$ acts with the roots $(0001),
(0011), (0111), (0121)$.

Similarly the action of $M_\O$ on
  $L^{(2)}_\O$
  decomposes as a direct sum of two irreducible
  subrepresentations.
  One corresponds to a four-dimensional subgroup with roots,
$(1111), (1121), (1221),$ $(1231),$ and the other
corresponds to the subgroup $U_{0122}$.
The function
$
\sum_i n_i \alpha_i \mapsto \sum_i n_i c_i
$
in this case is $\sum_{i=1}^4 n_i \alpha_i \mapsto 2n_1+n_4.$
The group $V^{(2)}_\O/V^{(3)}_\O$ consists of the first component
of $L^{(1)}_\O$ and the second component of $L^{(2)}_\O.$  In other words,
it corresponds to the six roots $(1000), (1100), (1110), (1120), (1220)$
and $(0122).$
The group $U_\O/ V^{(3)}_\O$
is the product of a five dimensional abelian group corresponding
to the roots $(1000), (1100), (1110), (1120), (1220)$
and a five dimensional Heisenberg group corresponding to
the roots $(0001),
(0011), (0111), (0121),$ and $(0122).$

  In order to define a Fourier coefficient in this case, we need to take a character in general position on the five dimensional abelian group, as well as a nontrivial character of the center of the Heisenberg group.  We may then extend the latter to a theta representation.

Our five dimensional representation of $M_\O$ can be thought of as the standard representation of $SO_5$
(with $Sp_4$ then appearing in its guise as $\Spin_5$).
A point in this representation is in general position if  the $M_\O$-invariant quadratic form does not vanish at that point.   The character $\psi_{U_\O}(u) = \psi(u_{1110})$
is easily seen to be in general position.
 The identity component
of its stabilizer in $\Spin_5$ is
 $\Spin_4\cong SL_2 \times SL_2.$
 The stabilizer in $M_\O$ contains an additional one-dimensional torus,
 however, if we restrict to elements of $M_\O$ which
 also act trivially on $U_{0122},$
 the resulting group is $SL_2^{\alpha_2} \cdot SL_2^{0120},$
and indeed,
  the tables in  \cite{C} reflect that for
this unipotent class, the group $C$ is of type $A_1\times A_1$.
 As mentioned earlier, the root subgroups  $U_\alpha$ for the roots
 $(0001), (0011), (0111), (0121),$ and
$(0122)$ form a subgroup  isomorphic to
${\mathcal H}_5$. Thus, the corresponding Fourier coefficient, for
this unipotent class is given by integral \eqref{int31}. Here
$\theta_\phi^\psi$ is a vector in the space of $\Theta^\psi_2$, the
theta representation defined on $\widetilde{Sp}_4(\A)$, and $l$
is the projection from $U_{{\mathcal O}}$ to ${\mathcal H}_5$. As in
previous examples, the character $\psi_{U^{(2)}_\O}$ which is
nontrivial on the group $U_{0122}$ is built in the theta function. We
should mention, that since the group $SL_2\times SL_2 $ does not split under the
cover of $\widetilde{Sp}_4(\A)$, it follows that the above integral
defines a genuine automorphic function on the metaplectic cover $\widetilde {SL}_2({\A})\times \wt{SL}_2 ({\A})$.

There is a third type of Fourier coefficient which can be attached 
to an orbit such that the weighted Dynkin diagram has ones in it
in certain cases.  To be specific: recall that
$U_\O/V_\O^{(2)}$ is essentially a symplectic vector space, equipped 
with a nondegenerate skew-symmetric form which is fixed by the 
group $C.$  
Let $W$ be a maximal isotropic subspace, 
and let $U_\O^{(3/2)}$ denote the preimage of $W$ in $U_\O.$
Let $\psi_{U_\O^{(3/2)}}$ denote the
trivial extension of $\psi_{U_\O^{(2)}}$ to $U_\O^{(3/2)}.$
Then the third type of Fourier coefficient is 
\begin{equation}\label{FC:u3/2}
 \iq{U_\O^{(3/2)}} \varphi_\sigma(ug) \psi_{U_\O^{(3/2)}} \, du.
\end{equation}
Note that this integral 
defines a function on $\quo{C},$ {\it only} if
$C$ normalizes $U_\O^{(3/2)},$ i.e., if the maximal 
isotropic subspace $W   \subset U_\O/V_\O^{(2)}$
is $C$-invariant.

A coefficient of this type was used in \cite{BG-GL4}.  
Also, the  integral given in \cite{Jacquet-Shalika-EulerProductsAndClassificationII} for the convolution $L$-function 
of $GL_n \times GL_m$ involves a Fourier coefficient of this type 
whenever $n-m$ is positive and even.

For completeness, we record the analogue of  lemma 1.1 of \cite{G-R-S1}.
\begin{lem}
Let $f: \quo{\cH_{2n+1}}\to \C$ be any smooth function.  The following 
are equivalent:\begin{itemize}
 \item $$\iFA f((0|0|z)u) \psi(z) \, dz = 0, \qquad (\forall u \in \cH_{2n+1}(\A)), $$
\item $$\iFA \iiFA n f((0|y|z) u) \psi(z) \, dy\, dz = 0, \qquad(\forall u \in \cH_{2n+1}(\A))$$
\item 
$$
\iq{\cH_{2n+1}} f(u'u) \theta_\phi^\psi(u') \, du' = 0, \qquad (\forall u \in \cH_{2n+1}(\A), \; \phi \in S( \A^n)).
$$
\end{itemize}
\end{lem}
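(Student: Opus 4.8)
The plan is to prove the chain of equivalences by establishing $(1)\Rightarrow(2)\Rightarrow(3)\Rightarrow(1)$, exploiting the fact that the theta function $\theta_\phi^\psi$ on $\cH_{2n+1}(\A)$ is, up to the Weil representation machinery, built precisely out of the characters appearing in the first two conditions. Throughout I write elements of $\cH_{2n+1}$ as $(x|y|z)$ with $x,y \in \G_a^n$, $z \in \G_a$, as in the body of the paper, and I abbreviate the function on $\quo{\cH_{2n+1}}$ simply by $f$.

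First I would handle $(1)\Rightarrow(2)$. Fix $u \in \cH_{2n+1}(\A)$ and set $F_u(x|y|z) = f((x|y|z)u)$, a smooth function on $\quo{\cH_{2n+1}}$. The hypothesis $(1)$ says that for every $u$ the integral of $F_u$ over the center against $\psi$ vanishes; equivalently, in the Fourier expansion of $F_u$ along the (abelian, normal) center $Z \cong \G_a$, the $\psi$-isotypic component is zero. Now integrate $(1)$, with $u$ replaced by $(0|y|0)u$, over $y \in (F\bs \A)^n$: since $(0|y|0)$ normalizes $Z$ and acts trivially on it, this directly produces the left-hand side of $(2)$, which is therefore $0$. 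This step is essentially Fubini plus the observation that $y$-translation commutes with the central character, and it is routine.

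The substantive step is the equivalence of these vanishing statements with $(3)$, the vanishing of the theta-integral against $\theta_\phi^\psi$ for all Schwartz data $\phi \in S(\A^n)$. Here I would recall the standard description of the theta function: restricted to $\cH_{2n+1}(\A)$, $\theta_\phi^\psi((x|y|z)) = \psi(z)\,\sum_{\xi \in F^n} (\omega_\psi(x|y|0)\phi)(\xi)$, where $\omega_\psi$ is the Weil representation of the Heisenberg group on $S(\A^n)$; in particular $\theta_\phi^\psi$ transforms under $Z(\A)$ by $\psi$. Plugging the quotient-domain integral of $(3)$ into the unipotent integration and using the defining relation $(x_1|y_1|z_1)(x_2|y_2|z_2)$ from the group law, one unfolds the integral over $\quo{\cH_{2n+1}}$: the central $z'$-integration extracts exactly the $\psi$-component of a translate of $f$, the $x'$-integration collapses against the lattice sum $\sum_{\xi \in F^n}$ (folding $F^n \bs \A^n$ back up to $\A^n$ in the familiar Rankin–Selberg "theta-unfolding" move), and one is left with an integral over $y' \in (F\bs\A)^n$ of the iterated object appearing in $(2)$, paired against $\phi(\xi)$. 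Thus $(2)$ forces $(3)$; conversely, since $\phi$ is arbitrary and the resulting expression is, for fixed $f$ and $u$, a pairing of a fixed distribution on $\A^n$ against all Schwartz functions, $(3)$ forces that distribution to vanish, which reading the unfolding backwards gives $(2)$, and $(2)\Rightarrow(1)$ is the trivial implication (integrate $(1)$'s negation... more precisely, $(1)$ is the $y=0$ outer layer, and the argument of $(1)\Rightarrow(2)$ run with a single additional Fourier expansion along the $y$-directions recovers $(1)$ from $(2)$ using that the only $\psi$-eigenline for $Z$ that can be nonzero is pinned down by the Heisenberg commutation relations). For this last point one uses the standard fact that an automorphic function on $\cH_{2n+1}$ with central character $\psi$ is determined by its restriction to any maximal abelian subgroup, so vanishing of the $(0|y|*)$-integral propagates to vanishing of the $(0|0|*)$-integral.

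The main obstacle I anticipate is purely bookkeeping: getting the Weil-representation cocycle and the $\frac12(x_1\cdot {}_ty_2 - y_1 \cdot {}_tx_2)$ term in the group law to match up correctly through the unfolding, so that the lattice sum $\sum_{\xi\in F^n}$ telescopes the $F^n\bs\A^n$ integral into an $\A^n$ integral cleanly, with no stray characters left over. This is exactly the computation behind lemma 1.1 of \cite{G-R-S1}, so I would organize the proof to mirror that reference, citing it for the Heisenberg/Weil identities and supplying only the translation into the present coordinates; no genuinely new idea is needed beyond that dictionary.
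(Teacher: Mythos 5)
Your proposal is correct and takes essentially the same route as the paper, which proves this lemma simply by invoking the argument of lemma 1.1 of \cite{G-R-S1}: the substance is exactly the two moves you identify, namely unfolding the theta series so that the lattice sum $\sum_{\xi\in F^n}$ collapses the $(F\bs\A)^n$-integral over $x$ into an integral over $\A^n$ (relating the second and third conditions), and the Heisenberg/Stone--von Neumann Fourier-expansion argument, using right translation by $(\xi|0|0)$ with $\xi\in F^n$ to recover all $y$-Fourier coefficients from the constant one (relating the first two). One small slip of exposition: you write that ``$(2)\Rightarrow(1)$ is the trivial implication,'' when in fact it is $(1)\Rightarrow(2)$ that is trivial (integrate over $y$), as your own subsequent sentences make clear when you supply the Fourier-expansion argument for $(2)\Rightarrow(1)$.
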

\begin{proof}
The same arguments given to prove lemma 1.1 of \cite{G-R-S1} actually prove this stronger statement.
\end{proof}
\begin{cor}
For an automorphic representation $(\sigma, V_\sigma)$ of $H(\A)$ (or a covering
group), 
the following are equivalent:
\begin{description}
\item[1)] integral \eqref{int2} is zero for all $\varphi_\sigma\in V_\sigma,$ 
\item [2)]
integral \eqref{int31} is zero for all $\varphi_\sigma\in V_\sigma$  and all $\phi \in S(\A^n),$
\item[3)] integral \eqref{FC:u3/2} is zero for all $\varphi_\sigma\in V_\sigma.$ 
\end{description}
\end{cor}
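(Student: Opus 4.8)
The plan is to derive the corollary directly from the preceding lemma (the analogue of Lemma 1.1 of \cite{G-R-S1}) by applying it to a suitable function $f$ built from $\varphi_\sigma$. More precisely, fix $g \in C(\A)$ and, for $u' \in \cH_{2n+1}(\A)$, consider the function
$$
f_g(u') = \int\limits_{U_\O'(F) \backslash U_\O'(\A)} \varphi_\sigma(u u' g)\, du,
$$
where $U_\O'$ is an appropriate complementary subgroup so that $\varphi_\sigma(uu'g)$, integrated over $U_\O'$ and then over a copy of $\cH_{2n+1}$ (embedded via a splitting of the projection $l$), recovers the integrals \eqref{int2}, \eqref{int31}, and \eqref{FC:u3/2}. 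The point is that all three integrals in the Corollary are obtained from the same automorphic form $\varphi_\sigma$ by integrating over nested unipotent subgroups whose successive quotients are controlled by the Heisenberg group $\cH_{2n+1} = \cH_{2k+1}$ sitting inside $U_\O/V_\O^{(3)}$: integral \eqref{int2} corresponds to integrating out all of $U_\O^{(2)}$ against the central character $\psi$ (the first bullet of the Lemma, after averaging over the center); integral \eqref{FC:u3/2} corresponds to integrating over the preimage of a maximal isotropic subspace (the second bullet); and integral \eqref{int31} corresponds to pairing against a theta function (the third bullet).

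The key steps, in order, are: (i) unwind the definitions of \eqref{int2}, \eqref{int31}, \eqref{FC:u3/2} to exhibit each as an integral of $f_g$ over $\cH_{2n+1}(F)\backslash \cH_{2n+1}(\A)$ (or over the relevant sub-object) against the respective kernel — central character, constant-term over isotropic subgroup, theta function; (ii) check that $f_g$ descends to a smooth function on $\quo{\cH_{2n+1}}$, which follows from automorphy of $\varphi_\sigma$ together with the fact that $\cH_{2n+1}(F)$ is carried into $U_\O(F)$ under the relevant section (here one uses that $l$ is defined over $F$ and that $U_\O/V_\O^{(3)}$ really has the generalized Heisenberg structure asserted in section \ref{section: dealing with theta functions}); (iii) observe that vanishing of \eqref{int2} for all $\varphi_\sigma$ and all $g$ is precisely the statement that the first bullet of the Lemma holds for $f = f_g$ for all $g$, and similarly \eqref{FC:u3/2} $\Leftrightarrow$ second bullet (with the maximal isotropic $W$ playing the role of the $\{(0|y|z)\}$ subgroup) and \eqref{int31} $\Leftrightarrow$ third bullet; (iv) invoke the Lemma to conclude the three conditions are mutually equivalent. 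Because the Lemma is stated with the universal quantifier over all $u \in \cH_{2n+1}(\A)$ built in, the right-translation by $g$ and by elements of $U_\O$ outside $\cH_{2n+1}$ is absorbed automatically, which is exactly why the ``stronger statement'' in the Lemma is what is needed.

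The main obstacle I anticipate is bookkeeping step (i): one must be careful that $U_\O/V_\O^{(3)}$ decomposes as asserted — a direct product of the abelian piece $(V^{(2)}_\O/V^{(3)}_\O)\cap L^{(1)}_\O$ and a generalized Heisenberg group — and that the character $\psi_{U_\O}(l'(u))$ appearing in \eqref{int31} can be pulled outside the $\cH_{2n+1}$-integration so that only the ``Heisenberg part'' of $\varphi_\sigma$ is being tested by the Lemma. In other words, one needs to factor the domain of \eqref{int31} as an integral over the abelian part (carrying $\psi_{U_\O}\circ l'$) times an integral over $\cH_{2n+1}(F)\backslash\cH_{2n+1}(\A)$ carrying the theta function, Fubini the two, and only then apply the Lemma fiberwise. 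The same factorization, with ``theta function'' replaced by ``central character'' or ``constant term along $W$,'' handles \eqref{int2} and \eqref{FC:u3/2}. Granting that factorization — which is routine given the structure theory already set up — the corollary is immediate from the Lemma, and indeed the proof can be compressed to: ``Apply the previous lemma to the function $f_g$ defined above, for each $g \in C(\A)$; the three bulleted conditions there translate respectively into conditions (1), (3), (2) of the corollary.''
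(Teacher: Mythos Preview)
Your proposal is correct and is exactly the intended approach: the paper states the corollary immediately after the lemma with no proof at all, so the corollary is meant to follow from the lemma by precisely the reduction you outline---build $f_g$ on $\quo{\cH_{2n+1}}$ from $\varphi_\sigma$ by integrating out $V_\O^{(3)}$ and the abelian factor (against $\psi_{U_\O}\circ l'$), then match the three bullets of the lemma to conditions (1), (3), (2). Your identification of the bookkeeping needed (the direct-product decomposition of $U_\O/V_\O^{(3)}$, and the matching of the maximal isotropic $W$ with the $\{(0|y|z)\}$ subgroup up to an $Sp_{2n}(F)$-conjugation) is accurate and, as you say, routine given the structure already established.
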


\section{\bf  Constructing Global Integrals}
\label{section: constructing global integrals}

From the discussion in the previous section, it follows that given a
unipotent orbit ${\mathcal O}$ of an exceptional group $H$, we can
associate with it a set of Fourier coefficients, which are defined
on any automorphic representation of the group $H(\A)$. In this
section we shall use this to construct certain global
integrals.

We start with the case where the corresponding diagram has only
zeros and twos. Let $\pi$ denote a cuspidal representation defined
on the group $C(\A)$. Given a maximal parabolic subgroup $P$ of
$H$, with Levi decomposition $P=MU$, we let $E_\tau(h,s)$ denote an
Eisenstein series of the group $H(\A)$
formed using an element of the induced representation $\Ind_{P(\A)}^{H({\A})}\tau\delta_P^s$. Here $\tau$ is any automorphic representation
of the group $M(\A)$.

The global integral we consider in this case is given by
\begin{equation}\label{int4}
\int\limits_{C(F)\backslash C(\A)}
\int\limits_{U_{{\mathcal O}}(F)\backslash U_{{\mathcal O}}(\A)}
\varphi_\pi(g)E_\tau(ug,s)\psi_{U_{{\mathcal O}}}(u)dudg
\end{equation}

In the case when the diagram corresponding to the unipotent orbit
contains ones, we consider the integral
\begin{equation}\label{int6}
\int\limits_{C(F)\backslash C(\A)} \int\limits_{U_\O(F)\backslash U_{{\mathcal O}}(\A)}
\varphi_\pi(g)\theta_\phi^\psi(l(u)g)E_\tau(ug,s)\psi_{U_\O}(l'(u))dudg
\end{equation}
where $l$ and $l'$ project $U_\O/V^{(3)}_\O$ onto its Heisenberg
and abelian factors respectively.
(See the discussion before equation \eqref{int31}.)
As  explained in the previous section, in this
type of integral, the Fourier coefficient given by the integration
along $U_{{\mathcal O}}(F)\backslash U_{{\mathcal O}}(\A)$ may
define a genuine automorphic function on $\widetilde{C}(\A)$,
the double cover of $C(\A)$. If this is the case, then for
integral \eqref{int6} to be well defined, one of the representations $\pi$ or $E_\tau(g,s)$
should be defined on the double cover of the relevant group.

Both integrals \eqref{int4} and \eqref{int6} converge
absolutely whenever $s$ is not a pole of the Eisenstein
series. This follows from the moderate growth property
of automorphic forms and their Fourier coefficients,
and the rapid decrease property of cusp forms.
 Also, each defines a meromorphic  function on the
 whole complex plane, simply because the same is
 true of the Eisenstein series.

There are two other  families of global integrals which are of interest. We
refer to as reductive type integrals, and formally attach them to the
zero nilpotent orbit.  (For which $C$ equals $H$ itself.)
The first reductive type are integrals of the form
\begin{equation}\label{int7}
\int\limits_{H(F)\backslash H(\A)}\varphi_\pi(h)\theta(h)
E_\tau(h,s)dh
\end{equation}
where $\theta(h)$ is an
element of some irreducible  automorphic representation  $\Theta$ of  $H(\A)$. Here $\pi$ denotes a cuspidal representation of the group
$H(\A)$.
(The similarity in notation between $\Theta$ and $\Theta_n^\psi$ is deliberate:  in practice $\Theta$ is usually attached to a small orbit,
and in this way is like $\Theta_n^\psi,$ which is attached to the
minimal orbit of $Sp_2n.$)
The second family will be integrals of the type
\begin{equation}\label{int8}
\int\limits_{Z(\A)G(F)\backslash G(\A)}\varphi_\pi(g)
E_\tau(g,s)\,dg
\end{equation}
where $G$ is a reductive subgroup in $H$, $Z$ is its center,
and $\pi$ is a
cuspidal representation defined on $G(\A)$.

As mentioned before, when we deal with the groups $E_6$ and $E_7$,
we may need to use their similitude groups, in order to guarantee
that the integral will unfold to a suitable model. In these cases,
we may define the group $C$ as the stabilizer of a suitable
character $\psi_{U_\O}$ in the similitude group.  It will
certainly contain the center, $Z$ of the similitude group,
and we replace the integral over $\quo{C}$ by an
integral over $Z(\A)C(F)\bs C(\A).$
Each of the above integrals converges absolutely, and satisfies a
functional equation which is derived from the Eisenstein series.

Our main goal is to determine when  these integrals are Eulerian. To
try to answer this question, at least partly, we shall fix some
notation. We need a good way to parameterize automorphic
representations. One way to do it, which is suitable for our
problem, is by means of Fourier coefficients.

In \cite{G1}, it is explained how to associate with each unipotent
orbit a set of Fourier coefficients.
This was essentially repeated in the previous section.
 The following definition is a way to
attach to any automorphic representation a set of unipotent orbits.

\begin{definition}\label{def1}

( \cite{G1} Definition 2.1) Let $\sigma$ denote an automorphic
representation of a given split reductive group $G(\A),$ or of a
metaplectic cover of such a group. We shall denote
by $\mathcal{O}rb_G(\sigma)$ the set of all unipotent orbits ${\mathcal O}$ of $G$
such that  $\sigma$ has
a nonzero Fourier coefficient which corresponds to the unipotent
orbit ${\mathcal O}$, and by ${\mathcal O}_G(\sigma)$ the set of maximal elements
of $\mathcal{O}rb_G(\sigma).$

\end{definition}

The structure of the set ${\mathcal O}_G(\sigma)$ is not yet well
understood. In \cite{G1} there are several conjectures and
assertions which are related to this set.
To each representation which appears in integrals \eqref{int4},
\eqref{int6}, \eqref{int7} and \eqref{int8} we shall attach a number which we refer to as the
Gelfand-Kirillov dimension of the representation.
(By way of analogy with the ``dimension''
introduced in \cite{Kawanaka}, theorem 2.4.1, (iib).  Cf. also
the references on p. 158 of \cite{Kawanaka} for related
results on real Lie groups and the connection with the notion originally
introduced by Gelfand and Kirillov.)

\begin{definition}\label{gk}
\begin{enumerate}
\item Assume that the set $\mathcal{O}_M(\tau)$ contains a single
unipotent orbit $\O.$
Let $E_\tau(\cdot,s)$ denote
the Eisenstein series defined right before integral \eqref{int4}.
Then define the Gelfand-Kirillov dimension
$\text{dim}\ \tau$ of $\tau$ to be $\frac{1}{2}\text{dim}\ \mathcal{O}$.
For $\text{Re}\ s$ large define the Gelfand-Kirillov dimension of
$E_\tau(\cdot,s)$ to be $\text{dim}\ U+\text{dim}\ \tau$.
\item Since we will assume that $\pi$ is generic, then we define
$\text{dim}\ \pi=\text{dim}\ U_{\max}^C$ where  $U_{\max}^C$ is the
maximal unipotent subgroup of $C$.
\item For the theta
representation $\Theta_n^\psi$ defined on $\cH_{2n+1}(\A) \rtimes \widetilde{Sp}_{2n}({\A})$ we define $\text{dim}\ \Theta_n^\psi=n$.
\item Finally, for integrals of the type \eqref{int7}, we assume that
$\mathcal{O}_H(\Theta)$ is a singleton set $\{\O\},$ and define $\dim \Theta = \frac 12 \dim \O.$
\end{enumerate}
\end{definition}

{\bf Remark:}
Observe that $\O_C(\pi)$ is the singleton set containing the
maximal orbit of $C,$ which has dimension $2 \dim U_{\max}^C.$
Also $\O_{Sp_{2n}}( \Theta_n^\psi)$ is the singleton set containing
the minimal orbit of $Sp_{2n},$ which has dimension $2n.$
Finally, suppose that $\mathcal{E}_{\tau,s}$ is the space
of Eisenstein series attached to the induced representation
$\Ind_{P(\A)}^{H(\A)}\tau \delta_P^s.$  Then,
following \cite{G1}, proposition 5.16, we expect that
$$
\O_M(\tau) = \{\O\} \implies \O_H(\mathcal{E}_\tau) = \{ \O'\},
\text{ where }
\dim \O' = \dim \O+ 2 \dim U(P).
$$
 Actually, we can prove it for low rank
groups, including the exceptional group $F_4$. However, since we
will not need it, we omit it.

Note that the representation $\tau,$ or the representation
$\Theta$ appearing in \eqref{int7} could itself be an Eisenstein
series.  In this case the dimension is defined using the unique
element of $\O_M(\tau)$ or $\O_H(\Theta),$ and not using
definition \ref{def1}.  The motivation for this is that in the
practice of unfolding one needs to know information regarding
what Fourier coefficients these representations support,
and prefers, where possible, to make arguments that
are valid for any representation supporting a given set
of Fourier coefficients.

Next we consider the cuspidal representation $\pi$ which appears in
integrals \eqref{int4} and \eqref{int6}. Even though our main
interest is the case when $\pi$ is generic, and hence $\dim \pi$ is well defined, for possible future applications we prove

\begin{lemma}\label{lem2}

Assume that $H=F_4$ or $H=E_6$. For any nontrivial
unipotent orbit of $H$, let
$\pi$ be an irreducible cuspidal representation of $C(\A)$.
Then $\dim \pi$ is well defined.

\end{lemma}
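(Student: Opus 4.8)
The plan is to show that for every nontrivial unipotent orbit $\O$ of $H = F_4$ or $E_6$, the group $C$ attached to $\O$ (the connected component of the stabilizer of $\psi_{U_\O}$ in $M_\O$, as defined in Section 3) has the property that any irreducible cuspidal representation $\pi$ of $C(\A)$ has a well-defined Gelfand-Kirillov dimension. By Definition \ref{gk}(2), $\dim \pi = \dim U_{\max}^C$ is defined precisely when $\pi$ is generic, so for \emph{generic} $\pi$ there is nothing to prove. The substance of the lemma is therefore the cuspidal non-generic case, and here the natural strategy is to exploit the fact that $C$ is a reductive group of small rank: for $H = F_4$ or $E_6$, running through the tables in \cite{C} (page 401 onward), the groups $C$ that arise are all of rank at most $4$, and in fact each is (an isogeny form of, or a product of) groups of type $A_1, A_2, A_1\times A_1, A_1\times A_2, B_2, G_2, \Spin_6, \Spin_8$, tori, and similar small groups, possibly times a central torus. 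The key point is that for all such groups, every irreducible cuspidal automorphic representation is generic, or more precisely, has a single maximal unipotent orbit in $\O_C(\pi)$.

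First I would enumerate, case by case from the tables in \cite{C}, the finitely many groups $C$ occurring for nontrivial orbits $\O$ in $F_4$ and in $E_6$; this is a bounded check since each exceptional group has only finitely many unipotent orbits. Next, for each such $C$, I would invoke the relevant structural fact: for $GL_n$ ($n \le$ small), $SL_n$, $Sp_4$, $SO_m$ with $m$ small, and the exceptional $G_2$, together with products and central extensions thereof, every cuspidal representation is generic — this is classical for $GL_n$ and $SL_2$, follows from the theory of the metaplectic correspondence and theta for the small-rank classical cases, and for $G_2$ from the fact that cuspidal representations of $G_2$ are globally generic (this last is the one point needing a citation, e.g. to the work on $G_2$ automorphic forms). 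Alternatively, and more uniformly, one can argue that for all the $C$ in question, every automorphic representation has $\O_C(\sigma)$ a singleton, because the poset of unipotent orbits of $C$ is small enough that the wavefront/Fourier-coefficient hierarchy forces a unique maximal element; then $\dim \pi := \frac12 \dim \O$ for the unique $\O \in \O_C(\pi)$ is well defined by the same recipe used for $\tau$ in Definition \ref{gk}(1), extended to $\pi$. I would phrase the conclusion so that it covers both the generic case (giving $\dim U_{\max}^C$) and, should non-generic cuspidal $\pi$ occur, the unique-orbit case, and note these agree when $\pi$ is generic since then $\O_C(\pi)$ is the regular orbit of dimension $2\dim U_{\max}^C$.

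The main obstacle I anticipate is not any single deep theorem but the bookkeeping: one must make sure the list of groups $C$ extracted from \cite{C} is complete and correct for \emph{all} nontrivial orbits — including the subtle cases where the character $\psi_{U_\O}$ only has an open orbit for $M_\O$ over $\bar F$ and not over $F$, so that the $F$-form of $C$ may be a nonsplit inner form (as flagged in Section 3 and in \cite{G1}, example 2, page 328). For those, one needs that the relevant fact — cuspidal implies generic, or cuspidal implies unique maximal orbit — is insensitive to the $F$-form, which it is for the groups in play (inner forms of $SL_2$, $SL_3$, etc., still have the property, using that their cuspidal spectrum is controlled by the split form via Jacquet–Langlands-type correspondences, or simply because the orbit poset is form-independent). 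So I would structure the proof as: (i) reduce to non-generic cuspidal $\pi$; (ii) list all $C$ from the \cite{C} tables for $F_4$ and $E_6$; (iii) for each, cite or prove that $\O_C(\pi)$ is a singleton; (iv) define $\dim\pi$ accordingly and check compatibility with the generic case. The genericity input for $G_2$ is the one external ingredient; everything else is either classical or a finite verification.
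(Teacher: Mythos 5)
Your proposal has a genuine gap, and the main line of argument as stated would not go through. The key claim you lean on — that for every $C$ occurring as a stabilizer in $F_4$ or $E_6$, ``every irreducible cuspidal automorphic representation is generic'' — is false. For instance, $Sp_4$ (type $C_2$) admits non-generic cuspidal representations (Saito--Kurokawa lifts), and $C_2$ does occur in the table of stabilizers; so one cannot reduce to the generic case this way. Your list of groups $C$ is also incomplete: the paper's tables produce types $A_1, A_2, C_2, G_2, A_3, B_3, C_3, A_5$ (and products), and it is precisely the omitted types $C_3$ and $A_5$ that are the delicate cases. The paper's actual argument is quite different from ``cuspidal implies generic.'' It observes that $\dim\pi$ is well defined as soon as $\O_C(\pi)$ is a singleton, and then notes: for every type in the list except $C_3$ and $A_5$, the poset of unipotent orbits of the simple group is \emph{totally} ordered, so every nonempty subset automatically has a unique maximal element — no statement about the representation itself is needed. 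For $C_3$ the orbits are not totally ordered, but the \emph{special} orbits are, and \cite{G-R-S1}, Theorem 2.1, forces any orbit in $\O_C(\pi)$ to be special. Only for the single $A_5$ case (the stabilizer $SL_6$ attached to $\O = A_1$ in $E_6$) does genericity of cuspidal representations enter, and there it is the standard fact for $SL_n$ inherited from $GL_n$ — not a statement about $G_2$, which you flag as the place needing a citation (it does not: $G_2$ has totally ordered orbits, so no input about $\pi$ is needed).

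You do gesture at the correct mechanism in your ``more uniformly'' paragraph (``the poset of unipotent orbits of $C$ is small enough that the wavefront hierarchy forces a unique maximal element''), and the observation that $\dim\pi$ should be read as $\tfrac12\dim\O$ for the unique $\O\in\O_C(\pi)$, matching $\dim U_{\max}^C$ in the generic case, is exactly right. But as written the proposal neither commits to this as the argument nor identifies the two types ($C_3$ and $A_5$) where total ordering fails and an extra input (specialness from \cite{G-R-S1}, resp.\ genericity for $SL_6$) is actually required. Fixing the list of types and replacing the ``cuspidal implies generic'' claim with the total-ordering plus specialness argument would align it with the paper.
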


\begin{proof}

The lemma is proved by a case by case consideration.
The type of $C$ for each orbit is given in the tables of \cite{C}.
It will be a group of type $A_1, A_2, C_2, G_2, A_3, B_3, C_3, A_5,$
or some combination of these.

If $G$ is a simple group of any of the types listed above, except for
$C_3$ and $A_5,$   then
the orbits  of $G$ are totally ordered, and so every subset has
a unique maximal element.
For $C_3,$ the set of all  special orbits is totally ordered, and it is proved in \cite{G-R-S1} theorem 2.1
that any orbit appearing in $\O_C(\pi)$ must be special.
(For a definition of ``special,'' see \cite{C}, p. 389.
For an alternate description for classical groups, see \cite{C-M}, p. 100.)

This leaves a single orbit in $E_6$ when the stabilizer is of type $A_5.$
In fact (since we work with the split form of $E_6$) the stabilizer for this case
 is isomorphic
to $SL_6$ (If we work in $GE_6,$ the stabilizer is isomorphic to $\{ g \in GL_6: \det g \text{ is a square} \}.$  This case was considered in \cite{nagoya}.)
Every cuspidal representation of $SL_6$ is generic with respect to
some generic character, so in this case $\dim \pi$ is always defined
and equal to the dimension of the maximal unipotent subgroup in $SL_6.$
\end{proof}

Finally, we consider the representations occur in integrals
\eqref{int7} and \eqref{int8}. In these cases, we will assume that
$\pi$ is generic, and for the function $\theta$ which occurs in
integral \eqref{int7}, we will assume that its dimension is well
defined. This will not cause a problem, since the for cases we will
consider this assumption will be satisfied automatically. We record all
this in the next lemma.

\begin{lemma}\label{lem3}

With the above assumptions, the notion of dimension is well defined
for any representation occurring in integrals \eqref{int4},
\eqref{int6}, \eqref{int7} and \eqref{int8}.

\end{lemma}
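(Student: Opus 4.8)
The plan is to prove this by a short inspection, integral by integral, checking each representation that appears against the relevant clause of Definition \ref{gk}, and invoking Lemma \ref{lem2} where that is needed. First I would list the representations that actually occur: in \eqref{int4} and \eqref{int6}, the cuspidal representation $\pi$ of $C(\A)$ (or of $\widetilde{C}(\A)$), the representation $\tau$ of $M(\A)$ together with the Eisenstein series $E_\tau(\cdot,s)$, and --- only in \eqref{int6} --- the theta representation $\Theta_n^\psi$; in \eqref{int7}, the cuspidal $\pi$ of $H(\A)$, the automorphic representation $\Theta$ of $H(\A)$, and again $\tau$ and $E_\tau(\cdot,s)$; and in \eqref{int8}, the cuspidal $\pi$ of $G(\A)$ together with $\tau$ and $E_\tau(\cdot,s)$.

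I would then dispatch these in turn. The theta representation $\Theta_n^\psi$ causes no difficulty: its dimension is $n$ by Definition \ref{gk}(3), with no hypotheses. For $\pi$: in \eqref{int7} and \eqref{int8} we have assumed $\pi$ generic, so $\dim\pi$ is the dimension of the maximal unipotent subgroup of the relevant group by Definition \ref{gk}(2); in \eqref{int4} and \eqref{int6} the same applies whenever $\pi$ is generic, which is the main case, while for a general cuspidal $\pi$ of $C(\A)$ with $H=F_4$ or $H=E_6$ the statement is exactly Lemma \ref{lem2}, and when the Fourier coefficient along $U_\O$ produces a genuine function on the metaplectic cover $\widetilde{C}(\A)$ one restricts to generic $\pi$, where Definition \ref{gk}(2) again applies verbatim. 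For $\Theta$ in \eqref{int7}, the assertion that $\dim\Theta$ is well defined --- equivalently that $\mathcal{O}_H(\Theta)$ is a singleton --- is part of the standing hypothesis of the lemma; I would add the remark that in the two $F_4$ examples analyzed later this is verified directly.

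It remains to treat $\tau$ and $E_\tau(\cdot,s)$. By Definition \ref{gk}(1) one has $\dim E_\tau(\cdot,s)=\dim U+\dim\tau$ as soon as $\dim\tau$ is defined, and $\dim\tau$ is defined once $\mathcal{O}_M(\tau)$ is a singleton. Since $\tau$ is not prescribed in advance but is precisely the datum that the dimension identity of Section \ref{section: constructing global integrals} is designed to pin down, we only ever work with $\tau$ meeting this condition; and the remark following Definition \ref{gk} covers the case in which $\tau$ is itself an Eisenstein series, where $\dim\tau$ is read off from the unique element of $\mathcal{O}_M(\tau)$ rather than from Definition \ref{def1}. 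These remarks apply equally on metaplectic covers, for which Definition \ref{def1} is already phrased. Assembling the cases gives the lemma. The only step carrying genuine content is the appeal to Lemma \ref{lem2}, whose sole non-formal case --- the orbit of $E_6$ whose stabilizer is of type $A_5$ --- has already been dealt with; so beyond this bookkeeping I do not expect any real obstacle.
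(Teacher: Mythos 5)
Your proposal is correct and follows essentially the same line as the paper, which in fact offers no explicit proof of Lemma \ref{lem3} at all: the lemma is stated as a bookkeeping summary of Definition \ref{gk}, the remark following it, Lemma \ref{lem2}, and the paragraph immediately preceding the lemma (which imposes the genericity of $\pi$ in \eqref{int7}, \eqref{int8} and the well-definedness of $\dim\Theta$ as standing assumptions). Your write-up simply makes explicit the case-by-case verification that the paper leaves implicit, and it is accurate.
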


Recall that our goal is to study the question when are the integrals
\eqref{int4}-\eqref{int8} are Eulerian. Following the discussion in
the introduction of  \cite{G3} we introduce the following definition.

\begin{definition}\label{equation}
({\bf The basic equation:})  An integral of the type \eqref{int4},
\eqref{int6}, \eqref{int7}, or \eqref{int8}, is said to  satisfy the
basic equation if the sum of the Gelfand-Kirillov dimensions of the
representations involved in the integral is equal to the dimension
of the domain over which we integrate.
\end{definition}

We now write the basic equation in detail. Consider first integral
\eqref{int4}. The basic equation in this case is given by
\begin{equation}\label{dim0}
\dim C+\dim U_{{\mathcal O}}=\dim \pi+\dim E_\tau(\cdot,s)
\end{equation}
For integral \eqref{int6} the equation is given by
$$\dim C+\dim U_\O=\dim \pi+\dim E_\tau(\cdot,s)+\dim \Theta^\psi_k.$$
Here $k=\frac12(\dim U_\O-\dim V_\O^{(2)})$ is the unique integer such that $U_\O$ has a projection onto
$\cH_{2k+1},$ with the kernel of this projection contained in $V^{(2)}_\O.$
In the first identity above,
we
have $\dim U_{{\mathcal O}}=\frac{1}{2}\dim {\mathcal
O},$ while in the second case, we have
$\dim U_{{\mathcal O}}-\dim \Theta^\psi_k=\frac{1}{2}\dim {\mathcal O}$.
This follows from lemma 4.1.1, p. 56 of \cite{C-M}.

Hence, it follows that if we start from a nonzero unipotent orbit ${\mathcal
O}$ then the relevant integral \eqref{int4} or
\eqref{int6} satisfies the basic equation if and
only if it satisfies the equation
\begin{equation}\label{dim1}
\dim C+\frac{1}{2}\dim {\mathcal O}=\dim \pi+\dim E_\tau(\cdot,s)\notag
\end{equation}
Since we assumed that $\pi$ is generic, it follows that $\dim \pi=\dim U_C$ where $U_C$ is the maximal unipotent subgroup
of $C$. From definition \ref{gk} we also have that $\dim E_\tau(\cdot,s)=\dim U(P)+\dim \tau$. Plugging  this
inside the above  equation we obtain the basic equation
\begin{equation}\label{dim2}
\dim \tau=\dim B_C+\frac{1}{2}\dim {\mathcal O}
-\dim U(P)
\end{equation}
where $B_C$ is the Borel subgroup of $C$. Notice that this last
formula holds also for integral \eqref{int8}, if we take
$\dim {\mathcal O}=0$ and $C=G$. Arguing in a similar way for
integral \eqref{int7}, we deduce that the formula that should hold
in this case is
\begin{equation}\label{dim3}
\dim \theta+\dim \tau=\dim B_H -\dim U(P)
\end{equation}
where $B_H$ is the Borel subgroup of $H$.

We consider a few examples. We start with an example related to
integral \eqref{int7} for the group $H=F_4$. In this case, equation
\eqref{dim3} reduces to $\dim \theta+\dim \tau=28
-\dim U(P)$. Since $F_4$ has two maximal parabolic subgroups $P$
such that $\dim U(P)= 20,$ and two such that $\dim U(P)= 15,$ it follows that there are two possibilities.
First suppose that $\dim U(P)=20.$  This is the case
if $P=P_2$ or $P_3.$  Then \eqref{dim3} reduces to
$\dim \theta+\dim \tau=8$.  Now, $\dim \theta$ is equal to one half
of the dimension of a nontrivial nilpotent orbit of $F_4.$
The dimensions of the orbits are listed on page 128 of \cite{C-M}.
The minimal orbit has dimension $16,$ so it follows that in this case
$\theta$ must be attached to the minimal orbit, and
$\tau$ must be a character.  An example of  a representation
which is attached to the minimal orbit of $F_4$
is constructed and studied in \cite{G4}. In fact, it is a
representation which is defined on the double cover of $F_4(\A)$.

Now suppose that $\dim U(P) = 15.$
This is the case when
$P=P_1$ or $P=P_4,$ so that the Levi part is isomorphic to $GSp_6$
or $\GSpin_7,$
respectively.
In this case, \eqref{dim3} reduces to
$\dim \theta+\dim \tau=13.$
It then follows from \cite{C-M}
page 128, that either $\theta$
is attached to the minimal orbit, and $\dim \theta =8,$
or else $\theta$ is attached to the unipotent orbit $\widetilde{A}_1,$
and $\dim \theta =11.$
If $\dim \theta$ is $11,$ then we need $\dim \tau =2.$
  However, neither $GSp_6$
nor $\GSpin_7,$
 has an
orbit whose dimension is 4.  So, once again $\theta$ must be
attached to the minimal orbit.
The basic equation will be satisfied if we can find a representation
$\tau$ defined on $GSp_6$
or $\GSpin_7,$
such that the dimension is five.
Each of these two groups has a unique unipotent
orbit $\O$ of dimension $10.$  The unique $10$-dimensional unipotent
orbit of $GSp_6$ is $(2^21^2),$ and that of $\GSpin_7$ is $(31^5).$
(For the parametrization of orbits by partitions, see \cite{C-M}, p. 70.
To compute the dimension, use \cite{C-M}, \S 5.2, and  lemma 4.1.1.)

From this, it follows that there are essentially six different
possibilities for an integral of
 the form
\eqref{int7}.  First, suppose that the  Eisenstein series
 $E_\tau(h,s)$ comes from a representation induced from either $P_1$
or $P_4$, and $\tau$ is any representation of $GSp_6$
or $\GSpin_7$ whose
dimension is five.  Since the representation $\theta$ is on the
double cover, it follows that either $\pi$ or $E_\tau(g,s)$ has to
be defined on the double cover. If we form the Eisenstein series on
the double cover of $H$, we obtain an integral of the form
$$\int\limits_{H(F)\backslash H(\A)} \varphi_\pi(h)
\widetilde{\theta}(h)\widetilde{E}_\tau(h,s)dh.$$ Here $\tau$ is an
automorphic representation defined on the double cover of $GSp_6(\A)$ or
$\GSpin_7(\A)$ whose dimension is five. A second option is to let $\pi$
be a cuspidal representation defined on the metaplectic
double cover $\widetilde{H}(\A)$ of $H(\A).$ Thus, in this case, we have
$$\int\limits_{H(F)\backslash H(\A)} \widetilde{\varphi}_\pi(h)
\widetilde{\theta}(h)E_\tau(h,s)\, dh.$$ In this option $\tau$ is an
automorphic representation of $GSp_6(\A)$ or $\GSpin_7(\A)$ whose dimension
is five.

Now suppose that the Eisenstein series
$E_\tau(h,s)$  is attached to the parabolic
subgroup $P_2$ or $P_3$. In this case $\tau$ must be a character, and
it follows that $E_\tau(h,s)$ can not be a genuine function on the double
cover of $H(\A).$   Hence we must assume that $\pi$ is a cuspidal
representation defined on the double cover of $H(\A)$. The integral we
construct is similar to the above, where the only change is the
choice of the Eisenstein series.

As a second example,  consider the unipotent orbit ${\mathcal
O}=A_1$ for the group $H=E_6$. Its diagram is given in \eqref{a1}.
The relevant equation to consider is \eqref{dim2}. It follows from
\cite{C-M} that $\frac{1}{2}\dim  {\mathcal O}=11$, and from
\cite{C} page 402, it follows that $C$ is of type $A_5$. Thus
$\dim B_C=20$. Hence equation \eqref{dim2} is $\dim \tau=31-\dim U(P)$. The group $E_6$ has 4 non associated
maximal standard  parabolic subgroups. Their unipotent radicals have
dimensions $\dim U(P_1) =16,
\dim U(P_2)=21,\dim U(P_3)=25$ and $\dim U(P_4)=29,$
respectively.
The integral to consider in this case is the one written in
\eqref{int6}.  We consider each parabolic in turn.   First, if $P=P_1$, then
$\dim \tau=15$. The Levi part of $P_1$ is a group of type
$D_5$. It is not hard to check that this group has a unique
unipotent orbit such that half of its dimension is 15. This
unipotent orbit is $(3^31)$. Next, when $P=P_2,$ we obtain
$\dim \tau=10$. Since the Levi part of $P_2$ is a group of type
$A_5$, and since this group has no unipotent orbit of dimension twenty,
it follows that  the set of solutions to equation \eqref{dim2} is empty in this case. For $P=P_3$ we
obtain $\dim \tau=6$. The Levi part of this parabolic subgroup
is a group of type $A_1\times A_4$. There is one possibility. On the
group of type $A_1$ we take a representation attached to the orbit
$(1^2),$ i.e.,
a character, and, on the
group of type $A_4$, we take a representation attached to the
unipotent orbit $(2^21)$. We denote this by $(1^2 | 2^21)$. Finally,
for $P=P_4$, we have $\dim \tau=2$ and since the Levi part of
this parabolic subgroup is of type $A_2\times A_1\times A_2$, then the
only choice is  take a representation attached to the
unipotent orbit $(21)$ on one of the two $A_2$-components, and take characters on the other two components.
By symmetry, it does not matter which of the two $A_2$-components we
take the representation attached to $(21)$ on, so in this case there
is only one possibility, which we represent by $(21|1^2|1^3).$

In the following table we summarize the following data for the
group $H=F_4.$  First, in the leftmost column we label
the unipotent orbit in question. The non special orbits are labeled
with bold letters.  In the next column we write the type of the
group $C$. Since $H=F_4$, we have four maximal parabolic subgroups.
In the next two columns we write down the dimensions of $\tau$
required so that equation \eqref{dim2} holds.  Since  the unipotent radicals of $P_1$ and $P_4$
have the same dimension, we can write them in the same column.
Similarly for $P_2$ and $P_3$. In the next four columns we write
down possible unipotent orbits, defined on the Levi part of $P$,
whose dimension is twice the dimension of $\tau$. For example, consider
the unipotent orbit $B_2$. Since it is not special we label it with
bold letters. The stabilizer is of type $A_1+A_1$. When $P$ is such
that its Levi part is of type $C_3$ or $B_3$, then the dimension of
$\tau$ required to satisfy equation \eqref{dim2} is seven. The group
of type $C_3$ has 2 unipotent orbits of dimension 14. They are
$(3^2)$ and $(41^2)$. They are listed at the fifth column. Notice,
that since $(41^2)$ is not special we labeled it with bold letters.

Finally, the first row in the table is devoted to the integral of
type \eqref{int8}. The numbers indicated are the numbers
$\dim \tau+\dim \theta$ such that equation
\eqref{dim3} is satisfied. For example, if $P=P_1$ then $\dim \tau + \dim \theta$ must equal $13,$ and, as discussed above, this means that $\theta$
must be attached to the minimal orbit of $F_4.$
The Bala-Carter label of this orbit is ${\bf A_1}.$
Also,   $\tau$ must be attached to the orbit $(2^21^2).$

One may also consider  what we refer as the ``dual integrals'' to
integrals (9) and (10),
in which the roles of the cusp form and the Eisenstein series
are reversed.
For example, the integral dual to integral (9) is given
by
\begin{equation}\label{dual}
\int\limits_{C(F)\backslash C(\A)} \int\limits_{U_{{\mathcal
O}}(F)\backslash U_{{\mathcal O}}(\A)}
\varphi_\pi(ug)E_\tau(g,s)\psi_{U_{{\mathcal O}}}(u)\,du\,dg,\notag
\end{equation}
where $\varphi_\pi$ is a cusp form in the space
of a cuspidal automorphic representation of $\quo H$ and $E_\tau$ is an Eisenstein series on $\quo C.$
Convergence of such integrals follows from the fact that the Fourier coefficients of cusp forms inherit their rapid decay property, as has
recently been established in general by Miller and Schmid \cite{MS}.
However, in this paper we are mainly interested in integrals such
that the representation $\pi$ is generic, and which satisfy the equation
$$\text{dim}\ C+\frac{1}{2}\text{dim}\ {\mathcal{O}}=\text{dim}\
\pi+\text{dim}\ E_\tau( \cdot, s) $$

Since we require that $\pi$ is
generic, then $\text{dim}\ \pi=24$.  Further, $\dim E_\tau( \cdot, s)$
is strictly positive.
To be precise, it is at least equal to the smallest integer which
is the dimension of the unipotent radical of a parabolic of $C.$
There are only three nonzero orbits in $F_4$
such that $\frac 12\dim \O+ \dim C - 24$ is positive.  These
are ${\bf A_1}, \wt A_1,$ and $\wt A_2.$
 In the case of $\wt A_1,$ we have $\frac 12\dim \O+ \dim C - 24=2,$
 and $C \cong SL_4.$  This implies $\dim E_\tau( \cdot, s)
 \ge 3,$ ruling out this case.

 For $\wt A_2$ we have $\frac 12\dim \O+ \dim C - 24=5,$
 and $C \cong G_2.$  If $P$ is either of the maximal
 parabolic subgroups of $G_2,$ then the dimension
 of the unipotent radical of $P$ is $5,$ and so
 $\dim E_\tau( \cdot, s)=5$ if $\tau$ is a character of
 the Levi part of $P.$
  One of these cases can be reduced to a Whittaker
integral, and appears in \cite{G-R}.

 For ${\bf A_1},$ we have $\frac 12\dim \O+ \dim C - 24=5,$
 and $C \cong Sp_6.$
 It is possible to arrange that $\dim E_\tau( \cdot, s)=5$
 only by taking  $E_\tau( \cdot, s)$ to be an Eisenstein
 series  attached to
 the induced representations
 $\Ind_{P(\A)}^{Sp_6(\A)} \tau\delta_P^s,$ where $P$
the Levi subgroup of $P$ is isomorphic to $GL_1 \times Sp_4,$
and $\tau$ is a character.

 Thus, there are in effect only three cases for this type of integral,
 and one of them has already appeared in the literature.  We
 remark that a preliminary analysis suggests that neither of
 the others can unfold to a Whittaker integral.

\newpage

\centerline{\bf The group $F_4$}
\bigskip\bigskip

\begin{center}
\begin{tabular}{ | l | r | r | r | r | r | r | r | r |}
\hline label & stabilizer  & $P_1,P_4$ & $P_2,P_3$& $C_3$&$B_3$& $A_2+A_1$& $A_1+A_2$ \\
\hline 1&$F_4$&13&8&$[{\bf A_1},\ \ $&$[{\bf A_1},\ $&
 $[{\bf A_1},(1^3|1^2)]$&$[{\bf A_1},(1^2|1^3)]$\\
      &&&&$(2^21^2)]$&$(31^4)]$&&\\
\hline
  &&5&0 & $(2^21^2)$&$(31^4)$  & $(1^3|1^2)$& $(1^2|1^3)$\\
  ${\bf A_1}$& $C_3$&&&&&&\\
  \hline
  && 5&0 & $(2^21^2)$&$(31^4)$& $(1^3|1^2)$& $(1^2|1^3)$\\
  $\widetilde{A}_1$& $A_3$&&&&&&\\
  \hline
  &&1&---&---&---&---&---\\
  $A_1+\widetilde{A}_1$& $A_1+A_1$&&&&&&\\
  \hline
  && 5&0 & $(2^21^2)$&$(31^4)$  & $(1^3|1^2)$& $(1^2|1^3)$\\
  $A_2$& $A_2$&&&&&&\\
  \hline
  && 8&3&$(42)$&$(51^2)$ &$(3|1^2),(21|2)$&$(1^2|3),(2|21)$ \\
  $\widetilde{A}_2$& $G_2$&&&&&&\\
  \hline
  && 4&--- &---&${\bf (2^21^3)}$&---&---\\
  ${\bf A_2+\widetilde{A}_1}$ & $A_1$&&&&&&\\
  \hline
  && 7&2 &$(3^2)$&$(3^21)$ & $(21|1^2)$& $(1^2|21)$\\
  ${\bf B_2}$ & $A_1+A_1$&&&${\bf (41^2)}$&&&\\
  \hline
  &&5&0&$(2^21^2)$&$(31^4)$& $(1^3|1^2)$& $(1^2|1^3)$ \\
  ${\bf \widetilde{A}_2+A_1}$& $A_1$&&&&&&\\
  \hline
  & & 6& 1& $(2^3)$&$(32^2)$  & $(1^3|2)$& $(2|1^3)$ \\
  ${\bf C_3(a_1)}$ & $A_1$&&&&&&\\
  \hline
  && 5 &0 &$(2^21^2)$&$(31^4)$& $(1^3|1^2)$& $(1^2|1^3)$\\
  $F_4(a_3)$ & 0&&&&&&\\
  \hline
  && 8&3&$(42)$&$(51^2)$ &$(3|1^2),(21|2)$&$(1^2|3),(2|21)$\\
  $B_3$ & $A_1$&&&&&&\\
  \hline
  & &8&3&$(42)$&$(51^2)$ &$(3|1^2),(21|2)$&$(1^2|3),(2|21)$\\
  $C_3$ & $A_1$&&&&&&\\
  \hline
  && 7&2&$(3^2)$&$(3^21)$ & $(21|1^2)$&$(1^2|21)$\\
  $F_4(a_2)$ & 0&&&${\bf (41^2)}$&&&\\
  \hline
  && 8 & 3&$(42)$&$(51^2)$ &$(3|1^2),(21|2)$&$(1^2|3),(2|21)$ \\
  $F_4(a_1)$  & 0&&&&&&\\
  \hline
  & & 9& 4& $(6)$&$(7)$& $(3|2)$&$(2|3)$\\
  $F_4$ & 0&&&LS&LS&LS\ \ &LS\ \ \\
  \hline
\end{tabular}
\end{center}

\newpage

\section{\bf  Unfolding Global integrals}
\label{section: unfolding global integrals}

In this section we indicate several steps in the unfolding of
integrals \eqref{int4} and \eqref{int6}. The integrals \eqref{int7}
and \eqref{int8} are treated similarly. Integral \eqref{int6} is
more complicated since it also involves the theta representation.
However, at least the first steps are similar, hence we will
first concentrate on integral \eqref{int4}, and then discuss
integral \eqref{int6} more briefly.  We will also return to integral
\eqref{int6} in section \ref{section: dealing with theta functions}.

\subsection{ The General Process}
\label{Subsection:  The General Process}

The purpose of the unfolding process is to determine if the integral
is Eulerian, by reducing things to certain functionals defined on
the representations in question. Then one studies if these
functionals are unique. For example, integral \eqref{int4} is given
by
\begin{equation}\label{int41}
I=\int\limits_{C(F)\backslash C(\A)} \int\limits_{U_{\O}(F)\backslash U_{{\mathcal O}}(\A)}
\varphi_\pi(g)E_\tau(ug,s)\psi_{U_{\O}}(u)dudg
\end{equation}
The two representations which are of import to us are $\pi$ and
$\tau$. After we finish the  unfolding  process we will obtain
certain inner integration, which may be an integral over a reductive
group, or a Fourier coefficient or some combination of the two.
Since we
assume that the cuspidal representation $\pi$ is generic, it is
natural to hope to obtain a Whittaker coefficient after the
unfolding process is finished. We  assume that $\tau$ has
Gelfand-Kirillov dimension as specified by the dimension formula. In
most cases, this means that $\tau$ is not generic.

We now write several steps in the unfolding process. Unfolding the
Eisenstein series we obtain
$$E_\tau(ug,s)=\sum_{\gamma\in P(F)\backslash H(F)}f_\tau(\gamma
ug,s)=$$ $$\sum_{w\in P(F)\backslash H(F)/P_{{\mathcal O}}(F)}\ \
\sum_{\gamma\in (w^{-1}P(F)w\bigcap P_{{\mathcal O}}(F))\backslash
P_{{\mathcal O}}(F)}f_\tau(w\gamma ug,s)$$ The first sum is finite,
and representatives can be chosen to be Weyl elements of $H$. Thus,
if we define
\begin{equation}\label{int42}
I_w=\int\limits_{C(F)\backslash C(\A)} \int\limits_{U_\O(F)\backslash U_{{\mathcal O}}(\A)}
\varphi_\pi(g)\sum_{\gamma\in (w^{-1}P(F)w\bigcap P_\O(F))\backslash P_{{\mathcal O}}(F)}f_\tau(w\gamma
ug,s)\psi_{U_{{\mathcal O}}}(u)dudg\notag
\end{equation}
then $I=\sum_w I_w$ where the sum is over the space of double cosets
$P(F)\backslash H(F)/P_{{\mathcal O}}(F)$. Moreover, we
can write the sum
$$\sum_{\gamma\in (w^{-1}P(F)w\bigcap P_\O(F))\backslash P_{{\mathcal O}}(F)}=\sum_{\gamma\in
Q_w(F)\backslash M_{{\mathcal O}}(F)}\ \ \sum_{\mu\in\  U_{\O}^w(F)\bs U_{\O}(F)}
=\sum_{\gamma\in
Q_w(F)\backslash M_{{\mathcal O}}(F)}\ \ \sum_{\mu\in\  U_{\O,w}(F)}$$ Here $Q_w=M_\O \cap w^{-1} P w,$ which is a parabolic subgroup of $M_{{\mathcal O}}$,  $U_\O^w = U_\O \cap w^{-1} P w,$
and $U_{{\mathcal O},w}= U_{{\mathcal O}}\cap w^{-1} U^- w,$ where $U^-$ is the unipotent radical of the parabolic subgroup which is opposite to $P.$

The next step is to consider the space $Q_w(F)\backslash M_\O(F)/C(F)$. We have
$$\sum_{\gamma\in Q_w(F)\backslash M_{{\mathcal O}}(F)}=
\sum_{\nu\in Q_w(F)\backslash M_{{\mathcal O}}(F)/C(F)}\ \
\sum_{\gamma\in (\nu^{-1}Q_w(F)\nu\bigcap C(F))\backslash C(F)}$$
Plugging all this inside $I_w$, we then collapse summation with
integration. Thus, if we denote
\begin{equation}
\label{Eq: Contribution from the open orbit}
I_{w,\nu}=\int\limits_{(\nu^{-1}Q_w(F)\nu\bigcap C(F))\backslash
C(\A)}\ \  \int\limits_{U_{{\mathcal O}}^{w\nu}(F)\backslash
U_{{\mathcal O}}(\A)} \varphi_\pi(g)f_\tau(w\nu
ug,s)\psi_{U_{{\mathcal O}}}(u)dudg
\end{equation}
then $I_w=\sum_{\nu} I_{w,\nu}$, where the sum is over
representatives $\nu$ for the set  $Q_w(F)\backslash M_{{\mathcal O}}(F)/C(F),$ and  $U_{{\mathcal O}}^{w\nu}= U_\O \cap (w\nu)^{-1} P w\nu.$

Denote $L_\nu= \nu^{-1}Q_w\nu\bigcap C$ and $V=(w\nu) U_\O^{w\nu}
(w\nu)^{-1}$.
For $v\in V$ we denote $\psi_V(v)=\psi_{U_\O}((w\nu)^{-1}vw\nu)$. Factoring the measure in the above
integral, $I_{w,\nu}$ is equal to
\begin{equation}\label{int44}
\begin{aligned}
\int\limits_{L_\nu (\A)\backslash C(\A)}\ \
\int\limits_{U_{{\mathcal O}}^w(\A)\backslash U_\O(\A)} \int\limits_{L_\nu(F)\backslash L_\nu (\A)}
\int\limits_{V(F)\backslash V(\A)}&
F_\tau(m(v)m(w\nu l(w\nu ) ^{-1});\, w\nu ug,s)\\
&\psi_{V}(v)\varphi_\pi(lg)\psi_{U_\O}(u)\left|\delta_P(w\nu l(w\nu
)^{-1} )\right|^s \,dv\,dl\,du\,dg\end{aligned}
\end{equation}
To explain the notation, we recall that the natural definition for
an element of $\Ind_{P(\A)}^{H(\A)} \tau \delta_P^s$ is as a
function from $H(\A)$ taking values in the space of $\tau\otimes \delta_P^s.$  Since
the space of $\tau\otimes \delta_P^s$ consists of automorphic forms$: M(\A) \to \C,$
an element of $\Ind_{P(\A)}^{H(\A)} \tau \delta_P^s$ thus becomes a
function $M(\A) \times H(\A) \to \C,$ which we write
$F_\tau( m; h, s).$ This function satisfies
$$
F_\tau(m_1; m_2h, s) = \tau\otimes \delta_P^s(m_2) F_\tau(m_1; h,s) =  F_\tau ( m_1m_2; h,s).
$$
 for all $h\in H(\A), m_1, m_2\in M(\A)$ and $s\in \C.$
In order to define Eisenstein series, one passes to an alternate
realization of $\Ind_{P(\A)}^{H(\A)} \tau \delta_P^s$ consisting of
complex-valued functions, by defining $f_\tau(h,s) = F_\tau(e;
h,s).$
See page 60 of \cite{LNM1254} for a further discussion.
Also, in \eqref{int44} we have denoted the natural projection from
the parabolic subgroup $P$ to its Levi $M$ by $m.$

To proceed with the
unfolding process we consider the following integral
\begin{equation}\label{int45}
\int\limits_{L_\nu(F)\backslash L_\nu (\A)}
\int\limits_{V(F)\backslash V(\A)}
\varphi_\pi(lg)\varphi_\tau(m(v)m(w\nu l\nu^{-1} w^{-1}))\psi_{V}(v)\,dv\,dl
\end{equation}
It follows from the discussion after integral \eqref{int44} that, as
far as the unfolding process goes, unfolding integral \eqref{int45}
is equivalent to unfolding  integral \eqref{int44}. We
emphasize that integral \eqref{int45} is not an inner integration to
integral \eqref{int44}.

Thus, we have reduced the computation to a lower rank situation.

We expect that for most elements $\nu$, integral \eqref{int44} will
be zero. We are interested in those cases where the following holds.

\begin{definition}\label{def4}

We will say that that the global integral \eqref{int41} is an {\bf open
orbit type} integral if there is a unique element $w_0\in
P(F)\backslash H(F)/P_{{\mathcal O}}(F)$ and a unique element in the
space $\nu_0\in Q_w(F)\backslash M_{{\mathcal O}}(F)/C(F)$ such that
the integral \eqref{int45} satisfies the dimension identity
\begin{equation}\label{id1}
\dim \pi +\dim \tau= \dim L_{\nu_0}
+\dim V
\end{equation}
If
 \eqref{id1} is satisfied by more than one element $\nu,$
 but
 we can show that $I_{w,\nu}$ vanishes for all but one,
 then we shall say that the integral \eqref{int41} is
{\bf weakly open orbit type}.
\end{definition}

To explain this terminology, consider the action of $P \times U_\O C$
on $H$ by
$(p, ug) \cdot h = p h (ug)^{-1}.$
Then the stabilizer of $w \nu$ is isomorphic to $L_{\nu} U_\O^{w\nu},$
and in particular is of dimension $\dim L_\nu + \dim V.$
The identity \eqref{dim0} implies that
$$\dim \pi + \dim \tau = \dim U_\O + \dim C + \dim P - \dim H.$$
This is equal to $\dim L_\nu+ \dim V$ if and only if
the orbit of $w\nu$ has the same dimension as $H,$ in which
case it is open.
Observe that an open orbit for this action (i.e., an
open $P, U_\O C$-double coset), will certainly be contained in the open
$P, P_\O$-double coset.  Let $w_0$ denote a representative
for this orbit, chosen to be the shortest element of the Weyl group
which is contained in the orbit.
Then the mapping $Q_{w_0} \nu C \to P w_0 \nu C U_\O$ is a
well defined bijection from $Q_{w_0} \bs M / C$ to the space of $P, U_\O C$-double cosets contained in the open $P, P_\O$-double coset.  Hence,
there is an open $P, U_\O C$-double coset in $H$ if and only if there
is a $Q_{w_0}, C$-double coset in $M_\O.$

Now assume that $w\nu \in H(F).$  (Recall that
we identify $H$ with $H(\Fb),$ where $\Fb$ is a fixed algebraic closure
of $H,$ and likewise for other algebraic groups.)
Then $H(F) \cap P w\nu U_\O C$ may not be a single $P(F) \times U_\O(F) C(F)$-orbit.  In cases where it is not, the integral \eqref{int41} is
not of open orbit type, but it may still be of weakly open orbit type.

Notice that the dimension identity given in definition \ref{def4} is
analogous to the dimension identity \eqref{dim0}, but this time
the representations under consideration are
 $\pi$ and $\tau$ (as opposed to $\pi$ and $\Ind_{P(\A)}^{G(\A)} \tau \otimes \delta_P^s$). Experience indicates the
following:

\begin{conjecture}\label{conj1}

Suppose that integral \eqref{int41} is an open orbit type integral.
Then  integral \eqref{int44} is zero, except when $w=w_0$ and
$\nu=\nu_0.$ Moreover, the element $w_0$ can be chosen to
be Weyl element which corresponds to the longest Weyl element in the
space $P(F)\backslash H(F)/P_{{\mathcal O}}(F)$.

\end{conjecture}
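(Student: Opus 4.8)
The plan is to organize the argument around two separate vanishing mechanisms, corresponding to the two layers of double cosets appearing in the unfolding. For the outer layer $P(F)\backslash H(F)/P_{\mathcal O}(F)$, I would argue that any $w \neq w_0$ contributes an integral $I_w$ which is identically zero because the relevant dimension of the domain of $I_{w,\nu}$ (after the inner reduction, i.e., $\dim L_\nu + \dim V$ for every $\nu$ in the double coset space attached to that $w$) is strictly less than $\dim\pi + \dim\tau$. The mechanism here is the standard one: the orbit of $w\nu$ under the $P \times U_{\mathcal O}C$-action discussed right after Definition \ref{def4} is \emph{not} open, so its dimension is strictly less than $\dim H$; feeding this through the identity $\dim\pi+\dim\tau = \dim U_{\mathcal O}+\dim C+\dim P - \dim H$ forces $\dim\pi+\dim\tau > \dim L_\nu+\dim V$. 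An integral of a cusp form (genericity/cuspidality of $\pi$) over a domain whose dimension falls short of the ``expected'' one will vanish; more carefully, one exploits that $\varphi_\pi$ is left-invariant and rapidly decreasing on $C(F)\backslash C(\A)$, so that after changing variables one obtains an integral of $\varphi_\pi$ against a character over a unipotent subgroup which contains a root direction on which the character is trivial but on which one can still extract a constant term or a further Fourier expansion, collapsing the contribution. This is precisely where the hypothesis that $\pi$ is generic (hence cuspidal) enters essentially.

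For the inner layer $Q_{w_0}(F)\backslash M_{\mathcal O}(F)/C(F)$, the situation is different, because here we are genuinely at the open $P,P_{\mathcal O}$-double coset, and by the discussion after Definition \ref{def4} the $\nu$'s in play biject with $Q_{w_0},C$-double cosets in $M_{\mathcal O}$ contributing to the open $P,U_{\mathcal O}C$-orbit. I would isolate the open $Q_{w_0},C$-double coset $\nu_0$ and argue that for $\nu \neq \nu_0$ the orbit of $w_0\nu$ is again non-open, so that the same dimension-deficiency argument as above applies to $I_{w_0,\nu}$. The content of ``open orbit type'' (as opposed to ``weakly open orbit type'') is exactly that the dimension identity \eqref{id1} pins down a unique $\nu_0$; one then needs to upgrade the strict dimension inequality for $\nu\neq\nu_0$ into actual vanishing, again using cuspidality of $\varphi_\pi$ to kill the lower-dimensional pieces, together with the fact that on the complement of the open orbit the stabilizer $L_\nu U_{\mathcal O}^{w_0\nu}$ has dimension $> \dim L_\nu+\dim V$ forced by the orbit being a proper subvariety --- wait, this needs care: the point is that $\dim L_\nu + \dim V = \dim H - (\text{dim of the orbit of } w_0\nu)$, so a non-open orbit gives $\dim L_\nu + \dim V > \dim\pi+\dim\tau$, and then in integral \eqref{int44} the inner integration over $L_\nu(F)\backslash L_\nu(\A)$ against the cuspidal $\varphi_\pi$ has ``too much room,'' producing a period of a cusp form over a subgroup that properly contains a unipotent radical, hence zero. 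The final clause, that $w_0$ may be taken to be the Weyl element corresponding to the \emph{longest} element in $P(F)\backslash H(F)/P_{\mathcal O}(F)$, I would handle by the standard observation that the open double coset is represented by the longest Weyl group element compatible with the two parabolics, combined with the choice (already recorded in the text) of representing each orbit by its shortest Weyl element; the two conventions are reconciled by noting that the open orbit is unique and its minimal-length representative is precisely this ``longest relative to $P, P_{\mathcal O}$'' element.

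The main obstacle is the second bullet: turning the \emph{strict} dimension inequality $\dim L_\nu + \dim V > \dim \pi + \dim\tau$ (for $\nu \neq \nu_0$, or for the non-open inner orbits) into genuine vanishing of $I_{w_0,\nu}$. The dimension count alone only shows the ``expected functional'' would be non-unique or over-constrained; to get actual zero one must produce, inside the extra directions, either a unipotent integration along which $\varphi_\pi$ has a vanishing constant term (cuspidality), or a unipotent integration along which the character $\psi_V$ is nontrivial on a subgroup over which $\varphi_\tau$ (equivalently the constant-term data of $\tau$, controlled by $\dim\tau = \frac12\dim\mathcal O_M(\tau)$ via the Fourier-coefficient description of §\ref{section: construction of fourier coefficients}) is invariant --- a contradiction forcing the Fourier coefficient, hence the integral, to vanish. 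Making this uniform across all cases is exactly why the authors phrase it as a conjecture supported by ``experience'' rather than a theorem; a rigorous proof would proceed case-by-case using the tables of \cite{C}, checking in each instance that one of these two mechanisms is available. I would therefore present the argument as a reduction to this case-check, carrying out the dimension bookkeeping in general and the root-theoretic verification in the specific examples of §\ref{An Example}.
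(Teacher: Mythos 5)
This statement is a \emph{conjecture} in the paper, not a theorem: the authors say explicitly, immediately after stating it, that ``at this point our ability to check whether Conjecture \ref{conj1} holds, is done only by carrying out the unfolding process.'' So there is no paper-internal proof to compare against, only case-by-case verifications in sections \ref{An Example} and \ref{sec: Example with theta}, each of which kills the non-dominant terms by exhibiting, explicitly, either a constant term of $\varphi_\pi$ or a Fourier coefficient of $\tau$ along an orbit strictly larger than $\O_M(\tau)$. Your proposal ends by conceding exactly this (``a rigorous proof would proceed case-by-case''), so at the level of overall strategy you have correctly reconstructed what the paper actually does and where the difficulty lives. But the general-proof scaffolding you build before that concession is internally inconsistent and in places wrong, so it does not upgrade the conjecture to a theorem.

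Two concrete errors in the dimension bookkeeping. First, for $w\ne w_0$ you assert the domain dimension $\dim L_\nu+\dim V$ is \emph{strictly less than} $\dim\pi+\dim\tau$, and that ``feeding this through the identity forces $\dim\pi+\dim\tau>\dim L_\nu+\dim V$.'' That is the wrong direction. From orbit dimension $=\dim P+\dim U_\O+\dim C-(\dim L_\nu+\dim V)$ and the basic equation $\dim\pi+\dim\tau=\dim P+\dim U_\O+\dim C-\dim H$, one gets $\dim L_\nu+\dim V-(\dim\pi+\dim\tau)=\dim H-\dim(\text{orbit})\ge 0$, so a non-open orbit gives $\dim L_\nu+\dim V>\dim\pi+\dim\tau$. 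You later flip the sign to the correct one in the inner-layer discussion, but write the supporting formula as $\dim L_\nu+\dim V=\dim H-\dim(\text{orbit})$, which is again incorrect (it should be $\dim P+\dim U_\O+\dim C-\dim(\text{orbit})$). Second, and more importantly, the slogan ``too much room, hence a period over a subgroup that properly contains a unipotent radical, hence zero'' is not a theorem: a strict dimension excess does not by itself produce a unipotent radical of a parabolic inside the stabilizer, nor does it by itself force an invariance of $f_\tau$ under a group in which $\psi_V$ is nontrivial. Those are precisely the root-theoretic and representation-theoretic facts that must be checked for each $(w,\nu)$, and they are nontrivial; see how lemma \ref{lem: conj one for theta case p1}, lemma \ref{conj 1 for theta case P3}, lemma \ref{conjecture for theta case P4}, and the lemma preceding proposition \ref{Example with theta: vanishing of orbits for rank < 3} each do real work beyond dimension counting. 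So your proposal correctly identifies the gap, but the heuristics you offer to fill it are not only incomplete, they start from the wrong inequality.

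The last clause of the conjecture, that $w_0$ may be taken to be the longest Weyl element in $P\backslash H/P_\O$, you handle correctly: the open $P\times P_\O$-double coset is unique and is represented by the longest element of $W_M\backslash W/W_{M_\O}$, and the open $P\times U_\O C$-orbit is contained in it (as the paper notes right after definition \ref{def4}), so this part is fine. The remainder of your write-up is an honest sketch of the \emph{intended} mechanism rather than a proof, and the correct conclusion is that the statement remains a conjecture, to be verified case by case exactly as in the paper.
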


At this point our ability to check whether Conjecture \ref{conj1} holds, is
done only by carrying out the unfolding process.  In addition, unfolding is,
in general, necessary to determine whether or not an integral is weakly
open orbit type, in the case when  $Q_{w_0} \bs M_\O/C$
 possesses an open orbit, which is a union of two or more orbits
 for the action of $Q_{w_0}(F) \times C(F).$

We mention that all of the above is similar for integrals of the
form \eqref{int6}. The theta function which appears in the integral
will appear in the same way in the analogue of integral
\eqref{int45}, and definition \ref{def4} and conjecture \ref{conj1}
are similar with the addition of the theta function.  See section \ref{subsection: Unfolding in the presence of theta functions} for some details.

We make one more definition.

\begin{definition}\label{preWhittaker}

Suppose that conjecture \ref{conj1} holds. We will refer to the
integral \eqref{int41} as a {\bf preWhittaker} integral if, for
any representation $\tau$ such that ${\mathcal O}(\tau)$ is given in
the above tables, there is a process of further unfolding of
integral \eqref{int45} which involves the  Whittaker coefficient of
the representation $\pi$.  We shall also refer to an integral as {\bf weakly preWhittaker} if we are able to unfold the integral, but only by placing additional conditions on $\tau$ (e.g., requiring that $\tau$ be an Eisenstein series) which do not follow from the conditions on  $\O(\tau).$
\end{definition}

To prove that a given global integral
is weakly preWhittaker, we only need to find a representation $\tau$
such that integral \ref{int45} will unfold to a Whittaker coefficient in
$\pi$.
This is frequently quite easy.
Proving that a global integral is preWhittaker is more challenging, and
generally requires some knowledge about the class of all
representations of the Levi $M$ attached to a given orbit.
It is not clear how to prove that an integral is not weakly preWhittaker,
or
preWhittaker. Even though it is usually easy to guess when an
integral does not unfold to a Whittaker coefficient in $\pi$, it is
not clear how to prove it.

To satisfy the dimension equations, we require that $\tau$ be a
representation which supports certain Fourier coefficients corresponding to specific unipotent orbits.
This is a method of sorting automorphic representations into classes which is convenient for our purposes.  Another method is to sort the representations into cuspidal representations, Eisenstein series, and residual representations, and then sort the Eisenstein series according to which parabolic subgroup they are induced from, etc.
Note that knowing  the Gelfand-Kirillov dimension of a representation $\tau,$ or even the more refined invariant $\O_G(\tau),$ does not tell us where $\tau$ falls in this other classification.
For example, a representation which is generic could be cuspidal, or it could be an Eisenstein series induced from any parabolic subgroup.
 However, it may happen that
for some representation $\tau$ integral \eqref{int45} can be further
reduced by means of Fourier expansions to a Whittaker type integral.
See the examples given in sections \ref{An Example} and \ref{sec: Example with theta}.

There is one exception to the general principle that $\O rb_G(\tau)$ does
not determine the type of $\tau.$

\begin{lem}  If $\O_G(\tau)$ is the singleton set containing the
zero orbit, then $\tau$ is a character.
\end{lem}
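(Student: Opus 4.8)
The plan is to show that $\O_G(\tau) = \{0\}$ forces $\tau$ to be one-dimensional by working with the Fourier-coefficient characterization of the set $\O rb_G(\tau)$ introduced in Definition \ref{def1}. The key point is that for the \emph{minimal} nonzero nilpotent orbit $\O_{\min}$ of $G$, the associated Fourier coefficient, as constructed in Section \ref{section: construction of fourier coefficients}, is essentially integration along a (generalized) Heisenberg unipotent subgroup against a nontrivial character of its center. Saying that $\O_{\min} \notin \O rb_G(\tau)$ means precisely that every vector in the space of $\tau$ has vanishing such coefficient. So the first step is: unwind what $\O_G(\tau) = \{0\}$ says, namely that $\tau$ has \emph{no} nonzero Fourier coefficient attached to any nonzero orbit, in particular none attached to $\O_{\min}$.

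First I would reduce to the simple (or simple-modulo-center) case: writing $G$ as an almost-direct product of its center and simple factors, a character of $G$ is the same as a character of each simple factor, and $\O rb_G(\tau)$ decomposes compatibly with the factorization of $\tau$ across the simple factors. Thus it suffices to treat $G$ simple, where $\O_{\min}$ is the unique minimal nonzero orbit and corresponds to a highest-root unipotent subgroup. Next I would invoke the result (this is the content, for classical groups, of \cite{G-R-S1} and in the generality needed here a standard consequence of the theory of Fourier coefficients along unipotent radicals, cf.\ \cite{G1}) that if $\tau$ is \emph{not} a character, then the constant term of $\tau$ along some proper parabolic is nonzero and nontrivial, and by an inductive/root-theoretic argument one produces a nonzero Fourier coefficient along the highest-root subgroup — i.e.\ $\O_{\min} \in \O rb_G(\tau)$, contradicting $\O_G(\tau) = \{0\}$. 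Concretely: if $\tau$ is not a character, pick a simple root $\alpha_i$ such that $\tau$ is nontrivial on $U_{\alpha_i}$; expand $\tau$ along a suitable unipotent subgroup built from $U_{\alpha_i}$ and root subgroups for roots adding up toward the highest root $\theta$, and use nonvanishing at a generic character to conclude that $\tau$ has a nonzero coefficient supported on $U_\theta$ with a nontrivial character, which is exactly the Fourier coefficient attached to $\O_{\min}$.

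The main obstacle I anticipate is making the last implication — ``not a character $\Rightarrow$ nonzero coefficient attached to $\O_{\min}$'' — rigorous in the stated generality (arbitrary split reductive $G$, and possibly a metaplectic cover). For split simple simply-connected groups this is essentially classical: a smooth function on $G(F)\bs G(\A)$ that is invariant under the derived group $G' = [G,G]$ and under $U_\theta$ with trivial $U_\theta$-character everywhere, being itself left $G(F)$-invariant, is forced to be constant on $G'(\A)$-orbits, hence (with the automorphy) descends to $G/G'$, i.e.\ is a character. The subtlety is that one wants the hypothesis ``$\O_G(\tau) = \{0\}$'' rather than ``$\tau$ is $U_\theta$-invariant,'' so one needs the Fourier-coefficient bookkeeping of Section \ref{section: construction of fourier coefficients} to translate vanishing of \emph{all} $\O_{\min}$-coefficients (over all of $\tau$, and for all vectors) into genuine $U_\theta$-invariance. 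Since the paper only invokes this lemma in an auxiliary way, I would present the argument for the cases actually needed (which are covered by the list of possible $C$'s in Lemma \ref{lem2}, all of classical or exceptional type where $\O_{\min}$ and its coefficient are explicit), and remark that the general statement follows from the same argument together with the structure theory of \cite{C}.

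Thus the proof amounts to: (1) reduce to $G$ simple; (2) identify the $\O_{\min}$-Fourier coefficient with integration along the highest-root subgroup against a nontrivial character; (3) show any automorphic $\tau$ with all such coefficients vanishing is $U_\theta$-invariant, hence $G'(\A)$-invariant, hence a character; (4) conclude that $\O_G(\tau) = \{0\}$ is incompatible with $\tau$ being infinite-dimensional. I expect step (3) to be the crux, and the rest to be routine.
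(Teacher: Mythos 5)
Your reduction to the simple simply connected case and your identification of the $\O_{\min}$-coefficient with integration along the highest-root subgroup $U_\theta$ (against the nontrivial characters of its center) are both correct and match the paper. The gap is in step (3), at the claim ``$U_\theta$-invariant, hence $G'(\A)$-invariant.'' Starting from right-invariance under $U_\theta(\A)$ and using the left $G(F)$-invariance of the whole space of $\tau$, one obtains right-invariance under $U_\alpha(\A)$ only for $\alpha$ in the Weyl orbit of $\theta$, i.e.\ for \emph{long} roots. In a simply-laced $G$ this is every root, and you are done. But in a non-simply-laced $G$ the long-root subgroups generate a proper subgroup of $(G,G)$ (e.g.\ in $Sp_4$ they generate only $SL_2\times SL_2$), so long-root invariance does not give invariance under all of $(G,G)(\A)$, and your contrapositive variant has the same problem: non-invariance along a \emph{short} simple root need not propagate to a nonzero coefficient on the long root $\theta$. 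Since the lemma is needed for $G$ of types including $G_2$ and $C_3$ (which appear as stabilizer groups in the paper's table), the non-simply-laced case is not optional.

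The paper closes this gap by bringing in a \emph{second} orbit $\O'$, chosen so that $V_{\O'}^{(2)}$ is a product of a unique short-root subgroup $U_{\alpha_S}$ with (possibly) some long-root subgroups, and so that the character $\psi(v_{\alpha_S})$ is in general position; vanishing of the $\O'$-coefficients then forces invariance under $U_{\alpha_S}(\A)$, and Weyl conjugation extends this to all short roots. To repair your argument you would need to add this second orbit (or some equivalent device) for each non-simply-laced simple type; with only $\O_{\min}$ in hand the conclusion does not follow.
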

\begin{proof}
One may express $G$ as a quotient of $G_1\times \dots \times G_k \times T$
for some simple simply connected groups $G_1, \dots , G_k$ and
torus $T,$  so the general case reduces to the case when $G$ is
simple and simply connected.

We show that if $\varphi$ is an automorphic form which does not
support any Fourier coefficient attached to a unipotent
orbit then  $\vph$ is invariant
by $U_\alpha(\A)$ for all $\alpha,$ and hence invariant
by $(G,G)(\A),$ where $(G,G)$ is the commutator subgroup.

To prove $U_\alpha(\A)$-invariance for all $\alpha,$ it suffices to prove it
for one element of each Weyl-orbit in $\Phi(G,T),$ i.e., for one
root of each length.  If $\O_{\min}$ is the minimal
orbit then $U_{\O_{\min}}$ is a Heisenberg group with center
$U_{\alpha}$ for some long root $\alpha_L.$ The fact that
Fourier coefficients attached to this orbit vanish implies that
$\vph$ is invariant by the center.  This by itself treats the
case when $G$ is simply laced.  In the case when there
are long and short roots, this proves that $\vph$ is invariant
by $U_{\alpha}(\A)$ for all long roots $\alpha.$  Further,
in each case one my identify a second orbit $\O'$ such that
$V_{\O'}^{(2)}$ is the product of $U_{\alpha_S}$ for some
unique short root $S,$ and possibly one or more root subgroups
corresponding to long roots.  Further, in each case the character
$\psi_{V_{\O'}^{(2)}}(v) = \psi( v_{\alpha_S})$ is in general
position.  Since $\vph$ does not support any Fourier coefficients
attached to $\O'$ either, it follows that $\vph$ is invariant by $U_{\alpha}(\A)$
for short roots as well.
\end{proof}

\subsection{A Special Case}
\label{Subsection: A Special Case}
In this subsection we consider a special case which is valid for any
 group $H$. We explain the details for the case of
integral \eqref{int4}, but it is clearly the same for integrals of
the type given by \eqref{int6}.  Let ${\mathcal O}$ be such that
$P_{{\mathcal O}}$ is maximal.  Let $w_\ell$ denote the longest
element of the Weyl group.  Then $w_\ell M_\O w_\ell^{-1}$ is a
standard maximal Levi subgroup.  Let $P_\O^a$ be the standard
maximal parabolic subgroup of $H$ such that $M =   w_\ell M_\O w_\ell^{-1}.$  (Among exceptional groups, $P_\O^a$ is equal to $P_\O$
except in some cases in $E_6.$)
 We consider the integral  \eqref{int4} in the special case
 when the parabolic
subgroup $P=MU$ from which we form the Eisenstein series
$E_\tau(h,s)$
is equal to $P_\O^a.$
 Thus $\tau$ is an automorphic representation of
$M(\A)$ and we assume that identity \eqref{dim0} holds
\begin{equation}\label{dim01}
\dim C+\dim U_{{\mathcal O}}=\dim \pi+\dim E_\tau(\cdot,s)
\end{equation}
Unfolding the Eisenstein series, we consider the space
$P(F)\backslash H(F)/P_{{\mathcal O}}(F)$. Let $w_0$ be longest Weyl
element in this space. From the relation $P=P_{{\mathcal O}}^a,$ we deduce that $U_{\O,w_0}=U_{{\mathcal O}}$ and that $Q_{w_0}=M_{{\mathcal O}}$. Also,
$w_0^{-1}M_{{\mathcal O}}w_0\bigcap M_{{\mathcal O}}=M_{{\mathcal O}}$.
Hence, the space $Q_{w_0}\backslash M_{{\mathcal O}}/C$ contains one
element, and we define $\nu_0=e$. From this we deduce that integral
\eqref{int45} is given by
\begin{equation}\label{special1}
\int\limits_{C(F)\backslash C(\A)}\varphi_\pi(t)\varphi_{\tau}
(t)dt\notag
\end{equation}
We claim that identity \eqref{id1} holds in this case. Indeed, in
this case, we have $L_{w_0}=C$ and $V=\{e\}$, where the last
equality follows from the fact that $U_{\O,w_0}=U_{{\mathcal O}}$. Recall that $\dim E_\tau(\cdot,s)=\dim \tau+\dim U$. We have
$\dim U= \dim U_{{\mathcal O}}$. Plugging this into
equation \eqref{dim01} we obtain
$$\dim C+\dim U_{{\mathcal O}}=\dim \pi+\dim \tau+\dim U_{{\mathcal O}}$$ Hence equation
\eqref{id1} holds.

We remark that integrals of this type are almost never preWhittaker.

\section{\bf Maximal parabolic subgroups of $F_4$}
\label{s:MaximalParabolicsOfF4}
The group $F_4$ has four standard maximal parabolic subgroups, $P_1, P_2, P_3$ and $P_4.$  (See section \ref{section:   Basic notations}.)
$$
\begin{array}{|l|l|l|}\hline
\text{Levi}& \text{Isomorphic to...}\\\hline
M_1&GSp_6\\
M_2&\{
  (g_1, g_2) \in GL_2 \times GL_3\mid
  \det g_1 \cdot  \det g_2 =1
  \}\\
M_3&\{
  (g_1, g_2) \in GL_3\times GL_2\mid
  \det g_1 \cdot  \det g_2^2 =1
  \}\\
M_4&\GSpin_7\\
\hline\end{array}
$$
Before working an example it will be convenient to pin down a specific
identification each standard maximal Levi subgroups with the matrix group
listed above.

We realize the group
$GSp_6$ as the group generated by $Sp_6=\{ m\in GL_6 :
\,^tmJ_6m=J_6\}$ and the one dimensional torus $\text{diag}
(a,a,a,1,1,1)$. Here
$$J_6=\begin{pmatrix} &&&&&1\\ &&&&1&\\ &&&1&&\\ &&-1&&&\\ &-1&&&&\\
-1&&&&&\end{pmatrix}$$

We fix specific isomorphisms
 by mapping\\
 $\boxed{\mathbf{M_1:}}$
  $$
    x_{\alpha_4}(r)\mapsto
    \bpm
    1&r&&&&\\
    &1&&&&\\
    &&1&&&\\
    &&&1&&\\
    &&&&1&-r\\
    &&&&&1\epm,\qquad
        x_{\alpha_3}(r)\mapsto
    \bpm
    1&&&&&\\
    &1&r&&&\\
    &&1&&&\\
    &&&1&-r&\\
    &&&&1&\\
    &&&&&1\epm
  $$
       $$x_{\alpha_2}(r)\mapsto
    \bpm
    1&&&&&\\
    &1&&&&\\
    &&1&r&&\\
    &&&1&&\\
    &&&&1&\\
    &&&&&1\epm,$$
     $$\alpha_1^\vee(t_1)\alpha_2^\vee(t_2)
\alpha_3^\vee(t_3)\alpha_4^\vee(t_4) \mapsto
   \bpm
    t_4&&&&&\\
    &t_4^{-1}t_3&&&&\\
    &&t_3^{-1}t_2&&&\\
    &&&t_2^{-1}t_3t_1&&\\
    &&&&t_3^{-1}t_1t_4&\\
    &&&&&t_4^{-1}t_1\epm
  $$
 $\boxed{\mathbf{M_2:}}$
  $$
  x_{\alpha_1}(r) \mapsto
 \left( \bpm 1&r \\ 0&1 \epm , I_3\right),\qquad
 x_{\alpha_3}(r) \mapsto
 \left( I_2, \bpm 1& r&\\ &1&\\&&1\epm
 \right),\qquad
x_{\alpha_4}(r) \mapsto
 \left( I_2, \bpm 1& &\\ &1&r\\&&1\epm
 \right),$$
$$ \alpha_1^\vee(t_1)\alpha_2^\vee(t_2)
\alpha_3^\vee(t_3)\alpha_4^\vee(t_4) \mapsto
\left( \bpm t_1&\\&t_1^{-1}t_2
 \epm , \bpm t_2^{-1}t_3 &&\\&t_3^{-1}t_4&\\ &&t_4^{-1} \epm
 \right).
  $$
\\
$\boxed{\mathbf{M_3:}}$
 $$
  x_{\alpha_1}(r) \mapsto
 \left(\bpm 1& r&\\ &1&\\&&1\epm , I_2\right),\qquad
 x_{\alpha_2}(r) \mapsto
 \left( \bpm 1& &\\ &1&r\\&&1\epm,I_2
 \right),\qquad
x_{\alpha_4}(r) \mapsto
 \left( I_3, \bpm 1& r\\ &1\epm
 \right),$$
$$ \alpha_1^\vee(t_1)\alpha_2^\vee(t_2)
\alpha_3^\vee(t_3)\alpha_4^\vee(t_4) \mapsto
\left(\bpm t_1 &&\\&t_1^{-1}t_2&\\ &&t_2^{-1} t_3^2\epm,  \bpm t_4t_3^{-1}&\\&t_4^{-1}
 \epm
 \right).
  $$
 The group $M_4$ is isomorphic to $\GSpin_7,$ as mentioned above.
 This group has an embedding into $SO_8.$  It also has a
 projection to $SO_7$ which restricts to an isomorphism on
 unipotent elements.
 We define $SO_8$ relative to the matrix $J_8$ which has
 ones on the diagonal which goes from top right to bottom left
 and zeros everywhere else.
 We define $e_{ij}$ to be the matrix with a $1$ at the $i,j$ and zeros
 elsewhere, and $e'_{ij}= e_{ij}-e'_{9-j,9-i}.$
 We fix  an isomorphism from $M_4$
 to $\GSpin_7\subset SO_8$  as follows:\\
$\boxed{\mathbf{M_4:}}$
 $$
  x_{\alpha_1}(r) \mapsto I_8 + r e_{12}' + r e_{35}', \qquad
 x_{\alpha_2}(r) \mapsto I_8 + r e_{23}',\qquad
x_{\alpha_3}(r) \mapsto I_8 + r e_{34}',
$$
$$ \alpha_1^\vee(t_1)\alpha_2^\vee(t_2)
\alpha_3^\vee(t_3)\alpha_4^\vee(t_4) \mapsto
\diag(
t_1t_4^{-1} ,
t_1^{-1} t_2 ,
t_2^{-1} t_1 t_3,
t_1^{-1} t_3,
t_3^{-1} t_1t_4,
 t_2 t_1^{-1} t_3^{-1}t_4,
 t_1 t_2^{-1}t_4 ,
 t_1^{-1} t_4^{2}).
  $$
 When working with unipotent subgroups of $M_4,$ we often find it
 more convenient to identify them with their images in $SO_7$ under
 a fixed projection.
 We define $SO_7$ relative to the matrix $J_7$ which has
 ones on the diagonal which goes from top right to bottom left
 and zeros everywhere else.
 We fix the projection $M_4 \to SO_7$ in such a fashion that
 $$
x_{\alpha_1}(r_1) \mapsto I_7 + r_1 e_{12}',
\qquad
x_{\alpha_2}(r_2)
\mapsto I_7 + r_2 e_{23}',\qquad
x_{\alpha_3}(r_3)\mapsto I_7 + r_3 e_{34}-r_3 e_{45}-\frac{r_3^2}2 e_{35},
$$
where now $e_{ij}' = e_{ij} - e_{8-j,8-i}.$
This determines the image of $x_{\alpha}(r)$ for any positive
root $\alpha,$ and we require that the projection of  $x_{-\alpha}(r)$ is $^tx_{\alpha}(r)$ for each root $\alpha.$  This then determines a unique map
 $M_4 \to SO_7$ which, as we say, is not an isomorphism but gives an
 isomorphism when restricted to any unipotent subgroup.

\section{\bf An Example}\label{An Example}
Let $H=F_4$, and consider the unipotent orbit ${\mathcal
O}=\widetilde{A}_2$. The weighted Dynkin diagram for this orbit is $$\ff0002.$$
From the  table  at the end of section \ref{section: constructing global integrals}, it follows that $C$ is the
exceptional group $G_2$. We have $P_{{\mathcal O}}=M_\O \cdot
U_{{\mathcal O}},$ with $M_\O\cong \GSpin_7.$  Since this orbit is labeled with zeros and twos
only, the integral to consider is integral \eqref{int4}.
Write $L_\O^{1,*}$ for the rational representation of $M_\O$ which is
dual to the rational representation of $M_\O$ on $L^{(1)}_\O$ given by
conjugation.  The $F$-points of this representation may be identified
with the space of characters of $\quo{U_\O}.$
The action of $M_\O$ on $L_\O^{1,*}$ may be identified with the representation of $\GSpin_7$ in which $\Spin_7$ acts by the spin representation and scalars act by scalar multiplication.
 There is a
  nondegenerate quadratic form on  $L_\O^{1,*}$ which is preserved by the action of $\Spin_7.$
 An element of $L_\O^{1,*},$ (or its dual, the space of characters) is in
 general position if its image under this quadratic form is nonzero.
 Further, the elements of $L_\O^{1,*}(F)$ in general position are a
 single $M_\O(F)$-orbit.
 See \cite{sato-kimura}, pp. 114-116, and \cite{igusa-spinor}, \S 4 especially proposition 4.

We choose
the character $\psi_{U_{{\mathcal O}}}$ as follows. For $u\in
U_{{\mathcal O}}$ write $u=x_{1111}(r_1)x_{0121}(r_2)u'$
as in subsubsection \ref{sss:conventions for defining characters}.   Then $\psi_{U_\O}(x_{1111}(r_1)x_{0121}(r_2)u')=\psi(r_1+r_2)$.
It is not hard to check that this character corresponds to a point in general position.  The embedding of $G_2$ into $\GSpin_7$ as the stabilizer of
 $\psi_{U_{{\mathcal O}}}$ is a pretty  standard embedding. A maximal unipotent of
this stabilizer is generated by all elements of the form
$$
 \begin{matrix}
 x_{1000}(r)x_{0010}(r), &\qquad
  x_{0100}(r),& \qquad
 x_{1100}(r)x_{0110}(-r),\\
 x_{1110}(r)x_{0120}(-r), &\qquad
x_{1120}(r),&\qquad
x_{1220}(r),\end{matrix}
  \qquad(r \in \G_a).
 $$
 For $1 \le i \le 4,$ we let $w[i]$ be the reflection in the Weyl group attached to
 the simple root $\alpha_i.$    We use the same notation for the
 standard representative for this reflection, which is
$x_{\alpha_i}(1)
x_{-\alpha_i}(-1)
x_{\alpha_i}(1).$
Finally, $w[i_1, i_2, \dots, i_k]$ denotes the product of the corresponding
simple reflections in the Weyl group, or of their representatives in $H(F).$
  Note that
  the Weyl group of $G_2$ may be identified with the subgroup of  that of $F_4$
which is generated by the simple reflection $w[2]$ and the product $w[1,3].$

 Thus, integral \eqref{int4} is given by
$$I=\int\limits_{G_2(F)\backslash G_2(\A)}\int\limits_{
U_{{\mathcal O}}(F)\backslash U_{{\mathcal O}}(\A)}
\varphi_\pi(g)E_\tau(ug,s)\psi_{U_{{\mathcal O}}}(u)\,du\,dg.$$ The
table at the end of section \ref{section: constructing global integrals} indicates that each of the four parabolic subgroups of $F_4$ is an option for the parabolic subgroup $P$ used to form the Eisenstein series. So we have four cases to check.  In each
case we carry out the unfolding process using the notations of
section \ref{section: unfolding global integrals}.
\subsection{$\bf P=P_1$.} First we determine the space $P(F)\backslash
H(F)/P_{{\mathcal O}}(F)$. A set of representatives is $e,\
w[1,2,3,4]$ and $w_0=w[1,2,3,2,1,4,3,2,3,4]$. Hence we obtain that
$Q_{w_0}$ is the maximal parabolic  subgroup of $\GSpin_7$ whose Levi
part is isomorphic to $GL_1\times \GSpin_5$. The group $U_{{\mathcal O},w_0}$ is generated by elements $x_\alpha(r),$ where $\alpha$ is in the set
$$\{(0001);\ (0011);\ (0111);\ (0121);\ (0122);\ (1122);\ (1222);\
(1232);\ (1242);\ (1342)\}.$$ Next we consider the space
$Q_{w_0}(F)\backslash \GSpin_7(F)/G_2(F)$.
Consider the action of $\GSpin_7(F)$ on the projective space consisting of all one-dimensional subspaces of its standard representation.  Then $Q_{w_0}(F)$ is the
stabilizer of the line consisting of all highest-weight vectors.
  Hence,
the space
$Q_{w_0}(F)\backslash \GSpin_7(F)$
may be identified with the orbit of this line.  It is not
hard to see that this orbit consists of all nonzero vectors ``of length $0$,'' i.e., on which the
$\Spin_7$-invariant quadratic form vanishes.
Now,
$Q_{w_0}(F) \bs \GSpin_7(F)/G_2(F)$ may be identified with  the
set of $G_2(F)$-orbits in this $\GSpin_7(F)$-orbit.
But again one
 easily checks that $G_2(F)$ acts transitively on the nonzero vectors of  length zero  in the standard representation.
 Thus $Q_{w_0}(F) \bs \GSpin_7(F)/G_2(F)$
 has only one element and we may take $\nu_0 =e.$

 Let $R$ denote the parabolic  subgroup
 of $G_2$ which preserves the line of highest weight vectors in the action considered above (i.e., in the standard
 representation of $G_2$).
 Thus, $R$ is the standard maximal parabolic subgroup of $G_2$ which contains the $SL_2$ which is
generated by $U_{\pm (0100)}$.
 Then in this case $L_{\nu_0}=R.$
To determine the group $V$ we first
observe that $U_{{\mathcal O}}^{w_0}$ is the group generated
by all $U_\alpha$ such that $\alpha$ is in the set $\{ (1111);\
(1121);\ (1221);\ (1231);\ (2342)\}$. When conjugating by $w_0$ we
obtain the group $V$ which is the group generated by all $U_\alpha$
such that $\alpha$ is in the set $\{ (0001);\ (0011);\ (0111);\
(0121);\ (0122)\}$.
Particular attention must be paid to $U_{1111}$ because it is not in the kernel of $\psi_U.$  We have
\begin{equation}\label{e:F4A2tP1eq1}
w_0 x_{1111}(r) w_0^{-1}
=x_{0001}(r).\end{equation}

We identify the Levi part of $P$ with the group $GSp_6$ as
in section \ref{s:MaximalParabolicsOfF4},
and compute that  $m(v)$ is embedded inside $GSp_6$ as the group $Z$
which consists of all matrices in $GSp_6$ of the form
\begin{equation}\label{uniz}
\left \{ z=\begin{pmatrix} 1&x&y\\ &I_4&x^*\\ &&1\end{pmatrix}\ :\ \
\ \ x\in Mat_{1\times 4};\ \ y\in Mat_{1\times 1}\right \}.
\end{equation}
To describe the projection of $R$ in $GSp_6$ we first notice that
$w_0x_{\pm 0100}w_0^{-1}=x_{\pm 0100}$.
Also,  $w_01010^\vee(t)w_0=1121^\vee(t).$
Here $1010^\vee(t):= \alpha_1^\vee(t)\alpha_3^\vee(t),$
and $1121^\vee$ is defined similarly.
Hence the Levi part of $R$, which is isomorphic to
the group $GL_2$ is embedded in $GSp_6$ as
$$\left \{\begin{pmatrix} |g|I_2&&\\ &g&\\ &&I_2\end{pmatrix}\ \ :\ \ \ g\in
GL_2\right \}$$ As for the unipotent radical of $R$, which we denote
by $U_R$, let
\begin{equation}\label{unig2}
u=x_{1000}(r_1)x_{0010}(r_1)x_{1100}(r_2)x_{0110}(-r_2)x_{1110}(r_3)
x_{0120}(-r_3)x_{1120}(r_4)x_{1220}(r_5)
\end{equation}
Then conjugating by $w_0$, we obtain
$$
w_0 u w_0^{-1} =
x_{1122}(r_1)x_{0010}(r_1)x_{1222}(r_2)x_{0110}(-r_2)
x_{1232}(r_3)x_{0120}(-r_3)x_{1242}(r_4)x_{1342}(r_5).
$$
Projecting into $M,$ we get
$x_{0010}(r_1)x_{0110}(-r_2)
x_{0120}(-r_3).$
Using our identification of $M$ with $6\times 6$ matrices,
we get
$$m(w_0uw_0^{-1})=\begin{pmatrix} 1&&&&&\\ &1&r_1&r_2&-r_3&&\\&&1&&r_2&\\
&&&1&-r_1&\\ &&&&1&\\ &&&&&1\end{pmatrix}$$ Combining all of the
above, integral \eqref{int45} in this case is
\begin{equation}\label{intg22}
\int\limits_{GL_2(F)\backslash GL_2({\A})}\int\limits_{U_R(F)\backslash U_R(\A)}\int\limits_{
Z(F)\backslash Z({\A})}\varphi_\pi(ut)\varphi_\tau(zm(w_0uw_0^{-1})t)\psi_Z(z)dzdudt\notag
\end{equation}
Here $\psi_Z$ is defined as follows. For $z$ as described in
\eqref{uniz}, we set $\psi_Z(z)=\psi(x_{1,1})$.
(It follows from \eqref{e:F4A2tP1eq1} that $\psi_Z(z)
= \psi_{U_\O}( w_0^{-1} z w_0).$)

We count dimensions. We have $L_{\nu_0}=R$ and hence $\dim L_{\nu_0}=9$. Also, $\dim U_{{\mathcal O}}^{w_0}=\dim V=5$. Hence $\dim L_{\nu_0} +\dim V=14$. Since $\pi$
is a generic cuspidal representation of $G_2$, its dimension is 6,
and as follows from the above table, we assumed that $\tau$ is
attached to the unipotent orbit $(42)$ of $GSp_6$ whose dimension
when divided by two is 8. Hence $\dim \pi +\dim \tau=6+8=14$. Hence \eqref{id1} holds.

We state some conclusions.  First, it is immediate
from the discussion above that integral \eqref{int45}
is of open orbit type.  Second,
 it is not hard to check
that the contribution to the integral $I$ from $e$ and $w[1,2,3,4]$
is zero, so
conjecture \ref{conj1} holds in this case.

By choosing $\tau$ to be a
certain Eisenstein series we can further unfold the last integral
and obtain an integral with a Whittaker coefficient of $\pi$. This
shows that the integral is a weakly preWhittaker integral. See
definition 5. Since we are not aware of any way to unfold the above
integral, to obtain a Whittaker coefficient of $\pi$, and this for
any representation $\tau$, we conjecture that the integral is not
preWhittaker.

\subsection{$\bf P=P_2$.} In this case the space $P(F)\backslash
H(F)/P_{{\mathcal O}}(F)$ consists of 5 elements. We can choose as
representatives the Weyl elements
$$e;\ w[2,3,4];\ w[2,1,3,2,3,4];\ w[2,3,4,2,1,3,2,3,4];\ w_0=
w[2,3,1,2,3,4,3,2,1,3,2,3,4].$$ Hence, if we consider $w_0$, then
$Q_{w_0}$ is the maximal parabolic subgroup of $\GSpin_7$ whose Levi
part $M_{w_0}$ is $GL_2\times \GSpin_3$. Also $U_{{\mathcal O}}^{w_0}$ is generated by the elements $x_\alpha(r)$ such that $\alpha\in \{ (1221);\ (1231)\}$. The
group $U_{{\mathcal O},w_0}$ is the product of the groups $U_\alpha$
for the other $13$ roots of $T$ in
 $U_{{\mathcal O}}$, which are
 $$
 0001,\;\;0011,\;\;0111,\;\;0121,\;\;0122,\;\;1111,\;\;1121,\;\;1122,\;\;1222,\;\;1232,\;\;1242,\;\;1342,\;\;2342
 $$
 Next we consider the space
$Q_{w_0}(F)\backslash \GSpin_7(F)/G_2(F)$. This space
can be identified with the set of $Q_{w_0}(F)$-orbits
in the $\GSpin_7(F)$-orbit of the character $\psi_{U_\O}.$
Recall that $\psi_{U_\O}$ is identified with
a point $x_0$ in the representation of $\GSpin_7$ in which $\Spin_7$ acts by the spin representation and scalars act by scalar multiplication.
This is an eight dimensional representation of $\GSpin_7.$
This representation
has a $\Spin_7$-invariant bilinear form $(\, , \, ).$  Assume that  $(\, , \, )$ is normalized so that  $(x_0, x_0)=1.$
  Then the $\GSpin_7$
orbit is $\mathcal{X}_\square :=\{x: (x,x)\text{ is a  square}\}.$
For a suitably chosen basis, $x_0=\,^t (0,0,0,1,1,0,0,0)$ and the image of $Q_{w_0}$
 consists of $8\times 8$ matrices of the form
$$
\bpm _tg_1^{-1}\det g_1& x & * \\ &g_1\otimes g_2 & x'\\&&g_1\det g_2
\epm, \qquad g_1, g_2 \in GL_2,
x \in \Mat_{2\times 4},
$$
where $x'$ is determined by $x,g_1$ and $g_2,$
and $g_1\otimes g_2$ is given by
$$
\bpm a&0&b&0\\0&a&0&b\\ c&0&d&0\\0&c&0&d\epm
\bpm g_2&\\&g_2 \epm , \qquad g_1 = \bpm a&b\\c&d \epm.
$$
Here and throughout, $_tA$ denotes the transpose of the matrix $A$
over the diagonal which runs from top right to lower left.  Thus
$_t \bspm a&b\\c&d \espm = \bspm d&b\\c&a \espm.$
We also introduce the notation $g^* := \, _tg^{-1}.$   Thus
$ \bspm a&b\\c&d \espm^* = \frac1{ad-bc}\bspm a&-b\\-c&d \espm.$
From this description it's easy to see that $\mathcal{X}_\square(F)$
is a union of two $Q_{w_0}(F)$-orbits:  one consisting of those elements of
the form $^t(x,y,z)$ with $x,z \in F^2, y \in F^4,$ such that
$z\ne0,$ and the other of those such that $z=0.$  Clearly, the first of these two orbits is open.
One may check that $\nu_0=w[2,1]$ is an element of it. In this case
$L_{\nu_0}=\nu_0^{-1}Q_{w_0}\nu_0\bigcap G_2=R^0$ which is defined
as follows.  Take the subgroup $R$  defined as in the case $P=P_1$.
Then $R^0 \subset R$  consists of $M_R\cong GL_2$ and all elements $u\in U_R$ as  in
\eqref{unig2} with $r_1=r_2=0$. Thus $\dim L_{\nu_0}=7$.
Thus, in this case, integral \eqref{int45} is given by
\begin{equation}\label{intg23}\begin{aligned}
\int\limits_{GL_2(F)\backslash GL_2({\A})}\int\limits_{(F\backslash \A)^5}\varphi_\pi(x_{1110}(r_1)
x_{0120}(-r_1)x_{1120}(r_2&)x_{1220}(r_3)t)\times
\\&
\varphi_{\tau_1}(t)\varphi_{\tau_2}\begin{pmatrix} 1&r_1&y_2\\
&1&y_1\\ &&1\end{pmatrix}\psi(y_1)dy_idr_jdt\end{aligned}\end{equation}
 Here we wrote
$\tau=\tau_1\times\tau_2$ where $\tau_1$ is an automorphic
representation of $GL_2(\A)$, and $\tau_2$ is an automorphic
representation of $GL_3(\A)$. (Cf. section \ref{s:MaximalParabolicsOfF4}.)
  Also, one can check that all other
elements give zero contribution to $I$. Hence conjecture \ref{conj1}
holds in this case.

We count dimensions.  We have $\dim L_{\nu_0}+\dim V=7+2=9$. According to  the table
the basic equation implies
$\dim \tau=3,$ and then formula \eqref{id1}  holds as well.

  There are two possibilities
for the pair $(\dim \tau_1, \dim \tau_2),$  namely $(1,2)$
and $(0,3).$   We now treat each of these two possibilities.

\subsubsection{$\dim \tau_1=1, \dim \tau_2=2$}
If $\dim \tau_1=1$ and $\dim \tau_2=2,$ then $\tau_1$ is
generic and $\tau_2$ is attached to the orbit $(21)$ of $GL_3.$
We will show that integral \eqref{intg23} can be further expanded and
unfolded to the Whittaker coefficient of $\pi$ with the only
assumption on $\tau$ that ${\mathcal O}(\tau_2)=(21)$. This will
prove that  integral \eqref{int45} is preWhittaker  in this case. Indeed, we have
\begin{equation}\label{eq: Four Exp GL3 in an example, P2}
\int\limits_{(F\backslash \A)^2}\varphi_{\tau_2}\begin{pmatrix} 1&r_1&y_2\\
&1&y_1\\ &&1\end{pmatrix}\psi(y_1)dy_1dy_2= \sum_{\alpha\in F}
\int\limits_{(F\backslash \A)^3}\varphi_{\tau_2}\begin{pmatrix} 1&r_1+z&y_2\\
&1&y_1\\ &&1\end{pmatrix}\psi(y_1+\alpha z)dzdy_1dy_2
\end{equation} If
$\alpha\in F^*$, then we obtain the Whittaker coefficient of
$\tau_2$ as an inner integration. This is zero from the assumption
that ${\mathcal O}(\tau_2)=(21)$. Hence we are left with the
contribution $\alpha=0$. Thus, changing variables, integral
\eqref{intg23} is equal to
\begin{equation}\label{intg34}
\begin{aligned}
\int\limits_{GL_2(F)\backslash GL_2(\A)}\int\limits_{(F\backslash \A)^6}\varphi_\pi(x_{1110}(r_1)
x_{0120}(-r_1)x_{1120}(r_2)&x_{1220}(r_3)t)\times
\\
&\varphi_{\tau_1}(t)\varphi_{\tau_2}\begin{pmatrix} 1&z&y_2\\
&1&y_1\\ &&1\end{pmatrix}\psi(y_1)dzdy_idr_jdt\end{aligned}\end{equation}
 Next we expand
$\varphi_\pi$ along the group $U_R/U_R^0$. We recall that $U_R$ is the
unipotent radical of the maximal parabolic subgroup of $G_2$ which
preserves a line, and $U_R^0$ is the unipotent radical of the group
$R^0$ defined above. The group $GL_2(F)$, as embedded in $R^0$ acts
on this expansion with two orbits. The trivial orbit contributes
zero by cuspidality. Thus, the above integral is equal to
\begin{equation}\label{intg35}
\int\limits_{B_0(F)\backslash GL_2(\A)}
\int\limits_{U_R(F)\backslash U_R(\A)}\varphi_\pi(ut)\psi_{U_R}(u)du
\varphi_{\tau_1}(t)\int\limits_{(F\backslash \A)^3}\varphi_{\tau_2}\begin{pmatrix} 1&z&y_2\\
&1&y_1\\ &&1\end{pmatrix}\psi(y_1)dzdy_idt
\end{equation}
Here $\psi_{U_R}$ is defined as follows. If we write $u$ as a
product given in \eqref{unig2}, then $\psi_{U_R}(u)=\psi(r_1)$.
Also, in integral \eqref{intg35} the group $B_0$ is the subgroup of
$GL_2$ consisting of all matrices of the form $\begin{pmatrix} a& r\\
&1\end{pmatrix}$ where $a\in F^*$ and $r\in F$. This group has the
factorization $B_0=TN$ where $T$ is a torus and $N$ is
unipotent. In terms of the group $F_4$, the
group $N$ is identified with $U_{0100}.$  Factoring the measure in integral
\eqref{intg35} we obtain
\begin{equation}\label{intg36}\begin{aligned}
\int\limits_{T(F)N(\A)(F)\backslash GL_2(\A)}
& \left[\int\limits_{U_R(F)\backslash U_R(\A)}
\iFA\varphi_\pi(ux_{0100}(r)t)\psi_{U_R}(u)
\varphi_{\tau_1}(x_{0100}(r)t)dr\,du\right]\, dt\ \times
\\ &\int\limits_{(F\backslash \A)^3}\varphi_{\tau_2}\begin{pmatrix} 1&z&y_2\\
&1&y_1\\ &&1\end{pmatrix}\psi(y_1)dzdy_i
\end{aligned}
\end{equation}
 Next we plug the
Fourier expansion of $\varphi_\pi$ along $U_{0100}$  into
the expression in brackets.   It is equal to
\begin{equation}\label{intg37}
\sum_{\alpha\in F}\int\limits_{(F\backslash \A)^2}
\int\limits_{U_R(F)\backslash U_R(\A)}\varphi_\pi(ux_{0100}(r+z)t)\psi_{U_R}(u)\psi(\alpha z)
\varphi_{\tau_1}(x_{0100}(r)t)dzdrdu\notag
\end{equation}
It follows from the cuspidality of $\pi$ that the contribution from
$\alpha=0$ is zero. Plugging the summation in integral
\eqref{intg36}, collapsing summation and integration, we obtain the
integral
\begin{equation}
\int\limits_{N(\A)(F)\backslash GL_2(\A)}\int\limits_{(F\backslash \A)^3}
W_{\varphi_\pi}(t)W_{\varphi_{\tau_1}}(t)
\varphi_{\tau_2}\begin{pmatrix} 1&z&y_2\\
&1&y_1\\ &&1\end{pmatrix}\psi(y_1)dzdy_idt\notag
\end{equation}
Here, for a vector $\vph_\sigma$ in a generic
representation $\sigma$ we denote by
$W_{\varphi_\sigma}$ its Whittaker coefficient, with the precise
generic character of the maximal unipotent subgroup
to be deduced from context. Thus integral \eqref{int45} is preWhittaker in this case.
\subsubsection{$\dim \tau_1=0, \dim \tau_2=3$}
(Equivalently, $\tau_1$ is a character, and $\tau_2$ is generic.)
In this case, we begin in the same fashion, by plugging in the
expansion \eqref{eq: Four Exp GL3 in an example, P2}.
The group $GL_2$ (once identified with the Levi of the parabolic subgroup
$R \subset C$) acts on the characters in this expansion with two orbits.
That is, $GL_2$ normalizes $U_{1110}$ and if we define
$\psi_a(x_{1110}(r)) = \psi(ar),$ then
$\psi_a( gx_{1110}(r)g^{-1}) = \psi_{a\det g}(r).$  It follows that in this
case
$I_{w_0, \nu_0}$ can be written as $I_{w_0, \nu_0}^0+ I_{w_0, \nu_0}^1,$
where
$$\begin{aligned}
I_{w_0, \nu_0}^0=\int\limits_{GL_2(F)V(\A)\backslash G_2(\A)}\int\limits_{(F\backslash \A)^3}\varphi_\pi(&x_{1110}(r_1)
x_{0120}(-r_1)x_{1120}(r_2)x_{1220}(r_3)g)
\psi(-r_1) \, dr
\times
\\
&\iiFA 3f_\tau\left(\left( I_2, \begin{pmatrix} 1&z&y_2\\
&1&y_1\\ &&1\end{pmatrix} \right)g, s\right)
\psi(y_1)\,dz\,dy_i\,dg\end{aligned}
$$

$$
\begin{aligned}
I_{w_0, \nu_0}^1=\int\limits_{SL_2(F)V(\A)\backslash G_2(\A)}\int\limits_{(F\backslash \A)^3}\varphi_\pi&(x_{1110}(r_1)
x_{0120}(-r_1)x_{1120}(r_2)x_{1220}(r_3)g)
\psi(-r_1) \, dr
\times
\\
&\iiFA 3f_\tau\left(\left( I_2, \begin{pmatrix} 1&z&y_2\\
&1&y_1\\ &&1\end{pmatrix} \right)g, s\right)
\psi(z+y_1)\,dz\,dy_i\,dg.\end{aligned}
$$
Here $V=U_{1110}U_{0120}U_{1120}U_{1220}.$
\begin{lem}
The integral $I_{w_0, \nu_0}^0$ is zero for all cusp
forms $\varphi_\pi$
and all $f_\tau \in \Ind_{P(\A)}^{H(\A)} \tau\delta_P^s,$
where $ s\in \C$ and $\tau$ is attached to the orbit $(1^2|3)$
of $GL_2\times GL_3.$
\end{lem}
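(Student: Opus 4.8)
The plan is to prove that $I^0_{w_0,\nu_0}$ vanishes identically, by exhibiting an incompatibility between the Fourier coefficient of $\varphi_\pi$ occurring inside it and the group $GL_2(F)$ over which one integrates; the hypothesis $\dim\tau_1=0$, i.e. that $\tau_1$ is a character and not a generic representation of $GL_2$, is exactly what prevents the $f_\tau$-factor from repairing this incompatibility.

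First I would identify the two inner integrations. The integration against $\varphi_\pi$ is the Fourier coefficient of the cusp form along $V=U_{1110}U_{0120}U_{1120}U_{1220}$; inside $G_2$ this $V$ is the group generated by the three root subgroups $x_{1110}(r)x_{0120}(-r)$, $x_{1120}(r)$ and $x_{1220}(r)$, i.e. the derived group of the maximal unipotent subgroup of $G_2$, and the character it carries is nontrivial on exactly the first of these (the $r_1$-variable). Write $\varphi_\pi^{V}$ for this Fourier coefficient, and $F$ for the $f_\tau$-factor appearing in $I^0_{w_0,\nu_0}$, whose $GL_3$-argument is the upper unitriangular matrix with superdiagonal entries $z,y_1$ and $(1,3)$-entry $y_2$, integrated against $\psi(y_1)$. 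I would then establish two facts. First, $F$ is left-invariant under $V(\A)$: each of the roots $1110,0120,1120,1220$ lies in the unipotent radical $U(P_2)$ of $P=P_2$, while the $GL_3$-argument of $f_\tau$ lies in the Levi $M_2$ and hence normalizes $U(P_2)$, so conjugating an element of $V$ past it keeps it in $U(P_2)(\A)$, on which $f_\tau$ is left-invariant. Since moreover $\tau_1$ is a character, the $GL_2$-factor of $M_2$ contributes nothing; together with the observation that the remaining generator $x_{1000}(r)x_{0010}(r)$ of the unipotent radical $U_R$ of the parabolic $R$ of $G_2$ with Levi $GL_2$ gets absorbed, after being carried across the $GL_3$-argument, into that argument, the same computation shows that $F$ is in fact left-invariant under all of $U_R(\A)$. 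Second, $\varphi_\pi^{V}$ is \emph{not} left-invariant under $GL_2(F)=M_R(F)$: the Levi $M_R$ normalizes $V$ (which equals $[U_R,U_R]$) and acts on the root subgroup $x_{1110}(\cdot)x_{0120}(\cdot)$ through a nonzero power of the determinant character, the copy of $SL_2$ generated by $U_{\pm(0100)}$ acting trivially there since the relevant root strings have length one; so, the Jacobian on $V(F)\bs V(\A)$ being trivial by the product formula, a left translation by $\gamma\in GL_2(F)$ replaces $\varphi_\pi^{V}$ by the Fourier coefficient of $\varphi_\pi$ along $V$ against the character rescaled by $(\det\gamma)^k$, $k\ne0$ — a genuinely different function of the argument whenever $\det\gamma$ is noncentral — whereas $\varphi_\pi^{V}$ \emph{is} left-$SL_2(F)$-invariant.

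Combining these, the integrand of $I^0_{w_0,\nu_0}$ is left-$SL_2(F)$-invariant but not left-$GL_2(F)$-invariant, and $F$, being independent of $\pi$, cannot cancel the defect: the character twist above lives on a root subgroup of $G_2$, unrelated to the $GL_3$-root subgroup carrying the character in $F$. Passing from the $SL_2(F)$-equivariance just obtained to the $GL_2(F)$-quotient required by the domain $GL_2(F)V(\A)\bs G_2(\A)$ forces the appearance of an integration along the one-dimensional torus which is the image of $\det$, against a character which varies nontrivially along it; this produces a Poisson-type cancellation — concretely, an integral of a nontrivial additive character over $F\bs\A$ — and hence $I^0_{w_0,\nu_0}=0$ for every cusp form $\varphi_\pi$, every $f_\tau\in\Ind_{P(\A)}^{H(\A)}\tau\delta_P^s$, and every $s\in\C$.

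I expect this last step to be the main obstacle: one must make precise how the domain $GL_2(F)V(\A)\bs G_2(\A)$ encodes that extra $\det$-integration relative to the $SL_2(F)$-equivariance supplied above — that is, trace back in the preceding unfolding which of the previously introduced unipotent integrations ends up carrying the relevant character — and verify that a genuine additive-character integral, rather than a convergent but nonzero average, appears. The contrast is with $I^1_{w_0,\nu_0}$, where the character on the $GL_3$-unipotent is the generic one, whose stabilizer in $GL_2$ is already $SL_2$, so there is no obstruction and that term survives. Alternatively, one could attempt to deduce the vanishing by Fourier-expanding $\varphi_\pi$ further along $U_R$ and invoking the cuspidality of $\pi$, at the price of considerably more bookkeeping.
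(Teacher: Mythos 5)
Your proposal does not prove the lemma; the decisive step fails. You observe, correctly, that the Fourier coefficient of $\varphi_\pi$ along $V$ with the character $\psi(-r_1)$ transforms under $\gamma\in GL_2(F)=M_R(F)$ by rescaling the character by $\det\gamma,$ and is therefore $SL_2(F)$- but not $GL_2(F)$-invariant. But the conclusion you draw from this — that passing from $SL_2(F)$ to the full $GL_2(F)$-quotient "produces a Poisson-type cancellation — concretely, an integral of a nontrivial additive character over $F\backslash\A$" — is a category error. The quotient $SL_2\backslash GL_2\cong\G_m$ is multiplicative; reconciling an $SL_2(F)$-equivariant integrand with a $GL_2(F)$-quotient produces a sum over the orbit of characters $\{\psi_a: a\in F^\times\}$ (equivalently, an integral over the determinant torus $F^\times\backslash\A^\times$), not an integral of an additive character over the additive quotient $F\backslash\A.$ And $\sum_{a\in F^\times}\int_{F\backslash\A}f(r)\psi(ar)\,dr$ equals $f(0)-\int_{F\backslash\A}f$ by Fourier inversion, which is generically nonzero. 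No cancellation mechanism is actually supplied. In particular, your argument never invokes the cuspidality of $\pi$ in any concrete way, and cuspidality is the real engine of the vanishing.

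The correct route is the one you mention at the end as an alternative, and its "bookkeeping" is exactly where the content lies. The paper Fourier-expands $\varphi_\pi^V$ along $\quo{U_{1000}U_{1100}}$ (the two-dimensional complement of $V$ inside $U_R$). The constant term is a constant term of $\varphi_\pi$ along the unipotent radical of a parabolic subgroup of $G_2$ and hence vanishes by cuspidality. For the nonconstant terms, $GL_2(F)$ permutes the characters transitively; collapsing the sum with the $GL_2(F)$-integration leaves the stabilizer $T_1(F)U_{0100}(F)$ of the representative $\psi(r_{1000}).$ The step your argument has no analogue of is the observation that $w_0\nu_0\cdot\alpha_2=\alpha_1\in\Phi(U,T),$ so that $h\mapsto f_\tau(w_0\nu_0 h,s)$ is left-invariant by $U_{0100}(\A).$ This lets one push the $\quo{U_{0100}}$-integration onto $\varphi_\pi,$ producing a constant term along the standard maximal parabolic of $G_2$ whose unipotent radical contains $U_{0100},$ which again vanishes by cuspidality. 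Without that specific invariance of $f_\tau$ under $U_{0100}(\A)$, and without an explicit appeal to cuspidality, the vanishing does not follow.
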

\begin{proof}
The proof relies on the fact that $\varphi_\pi$ is cuspidal,
while  $f_\tau$ is invariant by $U_{1000}.$ First expand
$$
\varphi_\pi^{V}:= \iq{V} \vph_\pi(vg)\; dg
$$
along $\quo{U_{1000}U_{1100}}.$  The constant term in this
Fourier expansion is a constant term of $\varphi_\pi,$ and hence
zero.  The remaining terms are permuted transitively by $GL_2$
(embedded as a subgroup of $R$).  As a representative
we choose $\x r {1000} \x r {1100} \mapsto \psi(r_{1000}),$ and the stabilizer in $GL_2$ is the product of $U_{0100}$ and a one-dimensional torus.
Thus
$$I_{w_0, \nu_0}^0=\int\limits_{T_1(F)U_{0100}V(\A)\backslash G_2(\A)}\int\varphi_\pi^{(V', \psi_V')}(g)
\iiFA 3
f_\tau\left(\left( I_2, \begin{pmatrix} 1&z&y_2\\
&1&y_1\\ &&1\end{pmatrix} \right)g, s\right)
\psi(y_1)\,dz\,dy_i\,dg,$$
where $V' = U_{1000}U_{1100}V,$ and $\psi_V'(v')= \psi( v'_{1000}).$
Factor the integration over $\quo{U_{0100}}.$   Since $w_0\nu_0 \cdot 0100
= 1000,$ it follows that $h \mapsto f_\tau(w_0\nu_0 h,s)$ is invariant
by $U_{0100}(\A).$  And so the function $\vph_\pi \mapsto I_{w_0, \nu_0}^0$ factors through the constant term attached to the standard maximal  parabolic subgroup
of $C$ whose unipotent radical contains $U_{0100}.$  This completes
the proof.
\end{proof}

\subsection{$\bf P=P_3$.} In this case the space $P(F)\backslash
H(F)/P_{{\mathcal O}}(F)$ consists of 7 elements. We can choose as
representatives the Weyl elements
$$e;\ w[3,4];\ w[3,2,3,4];\ w[3,2,1,3,2,3,4];\ w[3,2,1,4,3,2,3,4];$$
$$w[3,2,3,4,3,2,1,3,2,3,4];\ w_0=w[3,2,1,3,2,3,4,3,2,1,3,2,3,4].$$ The
group $Q_{w_0}$ is the maximal parabolic subgroup of $\GSpin_7$ whose
Levi part is $GL_3\times GL_1$. A similar argument shows that the space $Q_{w_0}(F)\backslash
\GSpin_7(F)/G_2(F)$ is finite and we may choose $\nu_0=w[3,2,1]$.
Conjugating by $w_0\nu_0$ we obtain that $L_{\nu_0}=SL_3$ embedded
in $G_2$ as the group generated by
the groups $U_\alpha$ attached to the
 long roots. Hence, integral \eqref{int45} is given by
\begin{equation}\label{intg24}
\int\limits_{SL_3(F)\backslash SL_3(\A)}\varphi_\pi(t)\varphi_{\tau_1}
(t)dt\int\limits_{F\backslash \A}\varphi_{\tau_2}
\begin{pmatrix} 1&x\\ &1\end{pmatrix}\psi(x)dx.\notag
\end{equation}
Here $\tau=\tau_1\times \tau_2$ where $\tau_1$ is an automorphic
representation of $GL_3$ and $\tau_2$ of $GL_2$. As in the two
previous cases conjecture \ref{conj1} holds in this case, and this
is an open orbit type of integral.\\
\subsection{$\bf P=P_4$.}
This case is an instance of the special case discussed in subsection \ref{Subsection: A Special Case}.
In this case the space $P(F)\backslash
H(F)/P_{{\mathcal O}}(F)$ consists of the following 5 elements,
$$e;\ w[4];\ w[4,3,2,3,4];\ w[4,3,2,1,3,2,3,4];\
w_0=w[4,3,2,1,3,2,3,4,3,2,1,3,2,3,4]$$ Thus $Q_{w_0}=\GSpin_7$, and
hence $Q_{w_0}(F)\backslash \GSpin_7(F)/G_2(F)$ consists of one
element. Thus $\nu_0=e$, and integral \eqref{int45} is
\begin{equation}\label{intg25}
\int\limits_{G_2(F)\backslash G_2(\A)}\varphi_\pi(t)\varphi_{\tau} (t)dt\notag
\end{equation}
As before conjecture \ref{conj1} holds in this case, and this is an
 open orbit type of integral.

\section{\bf Dealing with theta functions}
\label{section: dealing with theta functions}
\subsection{A lemma on Heisenberg groups}
\label{subsection: A lemma on Heisenberg groups}
We often use the fact that some unipotent subgroup of $F_4$ is isomorphic
to a Heisenberg group.  The point of this section is to record a simple lemma regarding what choices one has to make in order to pin down a specific isomorphism.

By a Heisenberg group, we mean a two-step nilpotent, unipotent linear algebraic group $U$ such that the center $Z$ and the quotient $U/Z$ are vector groups, and the skew-symmetric form $U/Z\times U/Z \to Z$
defined by the commutator is nondegenerate.  The
standard Heisenberg group $\cH_{2n+1}$ in $2n+1$ variables is defined by equipping $\G_a^n \times \G_a^n \times \G_a$ with the operation
$$(x_1|y_1|z_1)\cdot (x_2|y_2|z_2)
 = \left(x_1+x_2\;\left|\;y_1+y_2\;\left|\;z_1+z_2 + \frac
 12 (x_1 \cdot\,_t y_2 - y_1 \cdot \, _tx_2) \right.\right.\right).$$
 (For $(x|y|z) \in \cH_{2n+1},$ we think of $x$ and $y$ as row vectors.)
 We normally write an element of $\cH_{2n+1}$
 as $(x|y|z),$ with $x,y \in \G_a^{n}$ and $z \in \G_a.$
 We shall also use the notation $(x|y|z)_{2n+1}$
 when it is desirable to reflect the size of the group in the
 notation.

\begin{lemma}\label{l:HeisenbergIsomorphisms}
If $U$ is any Heisenberg group, then $U$ is isomorphic to the standard Heisenberg group of the same dimension, and an $F$-isomorphism is uniquely determined by an ordered collection
$(x_1, \dots, x_n, y_1, \dots, y_n)$
of injective $F$-morphisms $\G_a\to U$ satisfying
$$
(x_i(r), x_j(s)) =1_U \;\;\forall i,j, \qquad (x_i(r),y_j(s)) = 1_U \;\;\forall i,j \st i+j \ne n+1,$$ $$(y_i(r),y_j(s))=1_U \;\;\forall i,j,\qquad
(x_i(r), y_{n+1-i}(s)) = (x_j(r), y_{n+1-j}(s))\ne 1_U\;\; \forall i,j,
$$
for all $r, s\in \G_a.$
(Here, $(\;,\;)$ denotes the commutator and $1_U$
denotes the identity element of the group $U.$)
\end{lemma}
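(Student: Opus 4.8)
The plan is to set up coordinates on $U$ using the given morphisms and verify directly that the resulting map to $\cH_{2n+1}$ is an isomorphism, then argue uniqueness. First I would note that $Z := Z(U)$ and $U/Z$ are vector groups by hypothesis; since $U$ is two-step nilpotent, the commutator induces a nondegenerate alternating bilinear form $\omega\colon U/Z \times U/Z \to Z$. The last displayed condition says that the common value $c := (x_i(r), y_{n+1-i}(s))$ is a nonzero element of $Z$ depending (bi-additively) on $r,s$; because $Z$ is one-dimensional (forced by nondegeneracy of $\omega$ once we know $U/Z$ has a symplectic basis of size $2n$ — or, more carefully, because the morphism $r \mapsto (x_1(r), y_n(1))$ is a nonzero, hence injective, map $\G_a \to Z$ and $Z \cong \G_a$), we may fix the isomorphism $Z \cong \G_a$ so that $(x_i(r), y_{n+1-i}(s)) = rs$ (after rescaling one family, say the $y_j$, by a nonzero scalar if necessary — but the statement pins down the $y_j$ as given, so instead we simply record that the normalization of $Z \cong \G_a$ is determined by this condition). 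The first three families of relations say that the images of the $x_i$ pairwise commute, the images of the $y_j$ pairwise commute, and $x_i$ commutes with $y_j$ unless $i + j = n+1$.

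Next I would show the map $\Phi\colon \cH_{2n+1} \to U$ given by
$$
\Phi(a_1, \dots, a_n, b_1, \dots, b_n, c) = \Big(\prod_{i=1}^n x_i(a_i)\Big)\Big(\prod_{j=1}^n y_j(b_j)\Big)\, \iota(c),
$$
where $\iota\colon \G_a \xrightarrow{\sim} Z$ is the normalization above, is a group isomorphism. Well-definedness of the product (independence of order within each block) uses the commuting relations; that $\Phi$ is a morphism of varieties is clear. To see it is a group homomorphism one computes $\Phi(p)\Phi(q)$ and moves all $x$'s to the left past the $y$'s, picking up exactly the commutators $(y_j(b_j), x_i(a_i'))$, which vanish unless $i+j = n+1$ and otherwise contribute $\iota(-a_i' b_{n+1-i})$ (note the sign, and that the asymmetric convention $\frac12(x_1\cdot\,_t y_2 - y_1 \cdot\,_t x_2)$ in the definition of $\cH_{2n+1}$ is arranged to match this). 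Because all these commutators are central, there is no ambiguity in collecting them, and the resulting formula is exactly the $\cH_{2n+1}$ multiplication law. Injectivity: if $\Phi(p) = 1_U$ then reducing mod $Z$ and using that the images of $x_1, \dots, x_n, y_1, \dots, y_n$ in $U/Z$ are linearly independent (they are $2n$ vectors in the $2n$-dimensional $U/Z$, and independence follows from nondegeneracy of $\omega$ together with the relations — any dependence would lie in the radical of $\omega$) forces all $a_i = b_j = 0$, whence $\iota(c) = 1_U$ and $c = 0$. Since $\dim U = 2n+1 = \dim \cH_{2n+1}$ and both are connected, $\Phi$ is an isomorphism; its inverse is the desired $F$-isomorphism $U \to \cH_{2n+1}$.

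Finally, for uniqueness: any $F$-isomorphism $\theta\colon U \to \cH_{2n+1}$ restricts to an isomorphism $Z \to$ center of $\cH_{2n+1}$, and if we demand that $\theta^{-1}$ sends the standard generators $(e_i|0|0)$ and $(0|e_j|0)$ to $x_i$ and $y_j$ respectively, then $\theta \circ \Phi$ is an automorphism of $\cH_{2n+1}$ fixing every standard coordinate line pointwise on the generators, hence fixing the whole group generated by them, which is all of $\cH_{2n+1}$; so $\theta = \Phi^{-1}$. I expect the only genuinely delicate point to be bookkeeping the signs and the factor of $\frac12$ so that the commutator contributions reproduce the precise operation defining $\cH_{2n+1}$, together with checking that the four stated families of relations are exactly what is needed to make $\Phi$ multiplicative — everything else is routine linear algebra over the vector groups $Z$ and $U/Z$.
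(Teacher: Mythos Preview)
Your proposal is correct and fills in exactly the kind of details the paper has in mind; note that the paper's own ``proof'' reads in its entirety ``The proof is straightforward, and it is omitted,'' so there is no specific argument to compare against. Your outline---normalize $Z\cong\G_a$ using the common commutator value, define $\Phi$ via the ordered product, verify multiplicativity by the commutator relations, and deduce bijectivity by dimension count and independence in $U/Z$---is the natural approach and is sound; the only thing you might add for completeness is a sentence observing that such a collection $(x_1,\dots,y_n)$ always exists (choose a symplectic basis for $U/Z$ with respect to the commutator pairing and lift), which then yields part~(a) of the statement that every Heisenberg group is isomorphic to the standard one.
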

The proof is straightforward, and it is omitted.
We note that an isomorphism $U \to \cH_{2n+1}$ also determines
 an isomorphism of the group of all automorphisms of $U$ which fix $Z$ pointwise with $Sp_{2n}.$
 Suppose that a Fourier coefficient is defined as in
 section \ref{section: construction of fourier coefficients},
 by exploiting the Heisenberg group structure
 of $U$ to define theta functions.  Then the group $C$
 acts on $U$ by automorphisms which fix $Z$ pointwise,
 and hence a choice of isomorphism $U \to \cH_{2n+1}$
 also determines a homomorphism $C \to Sp_{2n}.$
One may make this description more explicit in terms
of the semidirect product $\cH_{2n+1}\rtimes Sp_{2n}$:
given an isomorphism $l: U \to \cH_{2n+1},$ there
is a unique homomorphism $\rho: C \to Sp_{2n}$
such that $l(gug^{-1}) = \rho(g) l(u) \rho(g^{-1})$
for all $g \in C$ and $u \in U.$

Write $\Phi(U/Z,T)$ for the set of roots of $T$ in the quotient
$U/Z,$ i.e., for $\Phi(U) \smallsetminus \Phi(Z).$
The simplest way to choose a family of morphisms in lemma \ref{l:HeisenbergIsomorphisms}
is to
\begin{itemize}
\item pair up the roots of $T$ in $U/Z$:  for each $\alpha$ in $\Phi(U/Z,T)$ there is a unique ``partner,'' such that the sum the unique root of $T$ in $Z.$
\item order the roots in such a way that for each $i,$
the root at position $2n+1-i$ is the partner (as above)
of the root at position $i,$
\item for $i =1$ to $n,$ let $x_i = x_{\beta_i},$
where $\beta_i$ denotes the root at position $i$ in
the ordering just fixed,
\item for $i =1$ to $n,$ let $y_i(r) = x_{\beta_{2n+1-i}}(r/N({\beta_i, \beta_{2n+1-i}})).$
\end{itemize}
Here, $N({\beta_i, \beta_{2n+1-i}})$ denotes the structure constant, defined as in
\cite{Gilkey-Seitz}.
This will be our ``standard'' method of determining an
isomorphism $U \to \cH_{2n+1}$ based on listing the
roots of $T$ in $U/Z$ in a specific order.
Note that the roots of $T$ in $U/Z$ come equipped with
a partial ordering associated to the base of simple roots
for $\Phi(G,T).$  That is, $\alpha < \beta$ if $\beta - \alpha$ is a positive root.  We shall normally
order the elements of $U/Z$ in a fashion which is compatible with this partial ordering.

As an example, consider the unipotent orbit
${\bf A_1}$ in the exceptional group $F_4.$
It's diagram is $$\ff1000.$$
The corresponding parabolic subgroup $P$ is $P_1.$
The unipotent radical $U$ of this parabolic subgroup is isomorphic
to $\cH_{15}.$  Indeed, it has a one dimensional center,
which is the root subgroup $U_{2342}.$ The other $14$
roots of $T$ in $U$ are
$$
1000,\;1100,\;1110,\;1120,\;1111,\;1220,\;1121,
\;1221,\;1122,\;1231,\;1222,\;1232,\;1242,\;1342.
$$
One easily checks that for each root $\alpha$ which appears on this list, the ``partner'' $2342-\alpha$ is also present.  For example, the partner
of $1120$ is $1222.$
Also, the roots are ordered in such a way that each root and its partner
are in symmetric positions.  For example, 1120 is the fourth root,
and 1222 is fourth-last.
In order to fix an isomorphism  $l:U\to \cH_{15},$   we require that
$l( x_{2342}(z)) = (0|0|z)$ and
$$l(x_{1000}(r_1)x_{1100}(r_2)x_{1110}(r_3)x_{1120}(r_4)x_{1111}(r_5)x_{1220}(r_6)x_{1121}(r_7))=(r_1,r_2,r_3,r_4,r_5,r_6,r_7|0|0).
$$
The relevant structure constants from \cite{Gilkey-Seitz} are
$$
N(1000,1342)=-1,\quad
N(1100,1242)=1,\quad
N(1110,1232)=-2,\quad
N(1120,1222)=1,$$
$$N(1111,1231)=2,\qquad
N(1220,1122)=-1,\qquad
N(1121,1221)=-2.
$$
This means that
$$
(x_{1000}(r), x_{1342}(s)) = x_{2342}(-rs), \qquad
(x_{1121}(r), x_{1221}(s)) = x_{2342}(-2rs), \quad \text{ etc.}
$$
It follows that the preimage of $(0|y_1,y_2,y_3,y_4,y_5,y_6,y_7|0)$
under $l$ must be
$$
 x_{1221}\left(-\frac{y_1}2\right)x_{1122}(-y_2)x_{1231}\left(\frac{y_3}2\right)x_{1222}(y_4)x_{1232}\left(-\frac{y_5}2\right)x_{1242}(y_6)x_{1342}(-{y_7}).
$$
Also in this case there are
six
other orderings of the roots which are compatible with the partial ordering
inherited from our choice of a base of simple roots:
$$
1000,\;1100,\;1110,\;1120,\;1220,\;1111,\;1121,\;\dots$$
$$1000,\;1100,\;1110,\;1120,\;1111,\;1121,\;1122,\;\dots$$
$$1000,\;1100,\;1110,\;1120,\;1111,\;1121,\;1220,\;\dots$$
$$1000,\;1100,\;1110,\;1111,\;1120,\;1220,\;1121\;\dots$$
$$1000,\;1100,\;1110,\;1111,\;1120,\;1121,\;1122\;\dots$$
$$1000,\;1100,\;1110,\;1111,\;1120,\;1121,\;1220\;\dots$$
Where ``$\dots$'' indicates that the remaining seven roots are
ordered by symmetry.

In practice, it makes sense to vary the choice of ordering
(i.e., the choice of isomorphism $U_\O \to \cH_{2n+1}$)
based on the situation.

Returning to the general case, suppose that if $l_1, l_2: U \to \cH_{2n+1}$
are two isomorphisms, defined over $F,$
such that the restrictions to the center
of $U$ agree.  Define $\rho_1, \rho_2: C \to Sp_{2n}$  as before, by requiring that $\rho_i(g) l_i(u) \rho_i(g^{-1})
= l_i( gug^{-1})$ for $i=1,2.$  Then there is an element
$\sigma \in Sp_{2n}(F)$ such that $l_2(u) \rho_2(g)= \sigma l_1(u) \rho_1(g) \sigma^{-1}.$
Hence $\theta_\phi^\psi( l_1(u) \rho_1(g)) = \theta_{\omega_\psi(\sigma)\phi}^\psi( l_2(u)\rho_2(g))$ for all $\phi \in S(\A^n).$   Thus, one may vary the choice of isomorphism
$U \to \cH_{2n+1}$ fairly freely.

We shall refer to an algebraic  group $U$ as a {\bf generalized Heisenberg group} if
\begin{enumerate}
\item  it is two step nilpotent, with center $Z$ equal to
its commutator subgroup $(U,U),$
\item both $Z$ and $U/Z$ are ``vector groups,''
\item
there exists a linear form $\ell:Z \to \G_a$
such that the skew symmetric
bilinear form $U/Z \to \G_a$ induced by
composing $\ell$ with the commutator
$U \to Z$ is nondegenerate.
\end{enumerate}
Note that the existence of $\ell$ as in the last condition clearly requires $\dim U/Z$
to be even, and that any choice of $\ell$
induces a projection from $U$ onto
$U/\ker \ell,$ which is a Heisenberg group.

\begin{lem}
Let $\O$ be an orbit and define $V_\O^{(i)}$ as in section
\ref{section: construction of fourier coefficients}.
Suppose that   $\ell: V^{(2)}_\O/V_\O^{(3)} \to \G_a$ is in general position.
Then $(x,y) \mapsto \ell(xyx^{-1}y^{-1})$ is a nondegenerate skew-symmetric
form on $V^{(1)}_\O/V^{(2)}_\O.$
\end{lem}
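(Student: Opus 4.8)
The plan is to pass to the Lie algebra, where the assertion becomes a transparent consequence of $\mathfrak{sl}_2$-representation theory together with the fact --- recalled above from \cite{C} --- that ``general position'' means lying in the open $M_\O$-orbit on the dual module. Since we work in characteristic zero, the exponential map identifies $V_\O^{(k)}$ with $\bigoplus_{j \ge k} \mathfrak{g}_j$, where $\mathfrak{g} = \Lie(H) = \bigoplus_j \mathfrak{g}_j$ is the grading by the semisimple element $\xi \in \Lie(T)$ with $\alpha_i(\xi) = c_i$ for each simple root, so that $U_\alpha \subset \mathfrak{g}_{\alpha(\xi)}$ and $\alpha(\xi) = \sum_i c_i n_i$. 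Under this identification $V_\O^{(1)}/V_\O^{(2)} \cong \mathfrak{g}_1$ and $V_\O^{(2)}/V_\O^{(3)} \cong \mathfrak{g}_2$, and by the Baker--Campbell--Hausdorff formula the commutator map $V_\O^{(1)}/V_\O^{(2)} \times V_\O^{(1)}/V_\O^{(2)} \to V_\O^{(2)}/V_\O^{(3)}$ corresponds to the Lie bracket $\mathfrak{g}_1 \times \mathfrak{g}_1 \to \mathfrak{g}_2$, the higher correction terms all landing in $\mathfrak{g}_{\ge 3}$. The Killing form $\kappa$ of $\mathfrak{g}$ is $\Lie(M_\O) = \mathfrak{g}_0$-invariant and pairs $\mathfrak{g}_2$ perfectly with $\mathfrak{g}_{-2}$, so it identifies the $M_\O$-module $(V_\O^{(2)}/V_\O^{(3)})^{*}$ with $\mathfrak{g}_{-2}$; writing $f'\in\mathfrak{g}_{-2}$ for the element corresponding to $\ell$, the form in question becomes $B_\ell(x,y) = \kappa(f',[x,y]) = \kappa([f',x],y)$ on $\mathfrak{g}_1$.

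I would then observe that the set $S$ of those $\ell$ for which $B_\ell$ is nondegenerate is Zariski open in the affine space $(V_\O^{(2)}/V_\O^{(3)})^{*}$ --- its complement is the zero locus of the determinant of the Gram matrix, a polynomial in $\ell$ --- and is $M_\O$-stable, since $B_{m\cdot\ell}(x,y) = B_\ell(m^{-1}x, m^{-1}y)$ by equivariance of the commutator. Hence if $S \ne \emptyset$ it is dense; the open $M_\O$-orbit $\Omega$ (the general-position locus) is likewise dense, so $S$ and $\Omega$ meet, and as $S$ is a union of $M_\O$-orbits this forces $\Omega \subseteq S$. Over $F$, an $\ell$ in general position lies in $\Omega(\Fb)$, so $B_\ell$ is nondegenerate over $\Fb$, hence over $F$ --- nondegeneracy of a bilinear form being insensitive to field extension. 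It thus remains only to produce a single $\ell \in S$.

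For that I would take $f'$ to be the nilnegative element $f$ of an $\mathfrak{sl}_2$-triple $(e,\xi,f)$ associated to $\O$ as in \cite{C}, \S 5.6, chosen so that $e \in \mathfrak{g}_2$, $f \in \mathfrak{g}_{-2}$ and $\xi$ induces the grading above, and let $\ell_f \in (V_\O^{(2)}/V_\O^{(3)})^{*}$ be the element corresponding to $f$ under the Killing form. Decomposing $\mathfrak{g}$ into irreducible $\mathfrak{sl}_2$-submodules gives $\dim \mathfrak{g}_1 = \dim \mathfrak{g}_{-1}$, and since $\ad(f)$ annihilates only the lowest-weight vectors, $\ad(f)\colon \mathfrak{g}_1 \to \mathfrak{g}_{-1}$ is injective, hence bijective; composing with the perfect pairing $\kappa\colon \mathfrak{g}_{-1}\times\mathfrak{g}_1 \to F$ shows that $B_{\ell_f}(x,y) = \kappa(\ad(f)x,y)$ is nondegenerate, so $\ell_f \in S$. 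The only step requiring genuine care is the first one --- pinning down the identification of the group-theoretic filtration and its commutator with the Lie-algebra grading and bracket --- and even there the characteristic-zero setting makes it routine. In particular we never need to check that $f$ itself lies in general position: the density argument automatically upgrades ``nondegenerate for one $\ell$'' to ``nondegenerate for every $\ell$ in general position.''
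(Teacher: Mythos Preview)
Your argument is correct. The Lie-algebra translation, the verification that $B_{\ell_f}$ is nondegenerate via $\ad(f)\colon\mathfrak g_1\xrightarrow{\sim}\mathfrak g_{-1}$, and the Zariski-open / orbit argument all go through.

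The route differs from the paper's, however. The paper argues \emph{directly} for the given $\ell$: since the open $M_\O$-orbit in $\mathfrak g_2$ consists precisely of the nilpositive elements of $\mathfrak{sl}_2$-triples associated to $\O$ (this is the content of \cite{C}, Prop.~5.7.3), dually every $\ell$ in general position is of the form $\kappa(\,\cdot\,,f_L)$ for the nilnegative $f_L$ of some such triple, and then the same $\ad(f_L)$ computation finishes immediately. You instead fix one triple, prove nondegeneracy for that single $\ell_f$, and then use ``$S$ open, $M_\O$-stable, nonempty $\Rightarrow$ $S\supseteq\Omega$'' to propagate the conclusion to the whole open orbit. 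Your density argument is a nice way to avoid invoking the (standard, but not entirely trivial) fact that \emph{every} general-position element comes from an $\mathfrak{sl}_2$-triple; the paper's approach is shorter once one is willing to cite that fact. Both buy the same nondegeneracy statement, and the Lie-algebraic core --- injectivity of $\ad(f)$ on $\mathfrak g_1$ --- is identical.
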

\begin{proof}
  To explain this phenomenon, one needs to
review a bit more of the theory underpinning the construction of
Fourier coefficients given in section \ref{section: construction of fourier coefficients}.  Let $\O$ be a unipotent conjugacy class in $H.$  Then,
$\O$ can be identified via the exponential map
with an orbit for the adjoint action of $H$ on its Lie algebra
$\frak h.$  Such an orbit can be identified, via the Jacobson-Morozov
theorem, with an orbit of Lie algebra homomorphisms $\fsl_2 \to \fh.$
Let $L: \fsl_2\to \fh$ be a homomorphism in the orbit attached to $\O,$
with the property that $h_L:=L\bspm 1&0\\0&1 \espm$ lies in the Lie algebra
$\frak t$ of our fixed maximal torus $T.$
Then the weight of a simple root $\alpha_i$
 in the weighted Dynkin diagram of    $\O$
is simply the eigenvalue for $h_L$ on the one-dimensional subalgebra
$\fh_{\alpha_i}.$
Furthermore, $L$ can be chosen so that $\ell(\exp X) = \beta\left( X, f_L \right),$ for all $X\in \mathfrak{v}^{(2)}_\O$
(the Lie algebra of $V_\O^{(2)}$),
 where $f_L:=L\bspm 0&0\\1&0 \espm.$  Write $\fh_i$ for the $i$-eigenspace of $h_L$ in $\fh.$
By viewing $\fh$ as a direct sum of $L(\fsl_2)$-modules, we see that
$X \mapsto [f_L, X]$ is an isomorphism $\fh_{1} \to \fh_{-1}.$
Now, given $X \in \fh_1$ there exists $Y  \in \fh_{-1}$ such
that $\beta(X,Y)\ne 0.$
But then for $X' \in \fh_{1}$ with $Y= [f_L, X'],$ we have
$\beta(X,[f_L,X'])=\beta( [X,X'], f_L) \ne 0.$   Nondegeneracy
of the form $(x,y) \mapsto \ell(xyx^{-1}y^{-1})$  follows easily.
\end{proof}

The procedure outlined above
for fixing a specific isomorphism from a Heisenberg group
 to the standard Heisenberg group
of the appropriate dimension
may be used on generalized Heisenberg
groups to extend $\ell$ to a projection $l:U \to \cH_{2n+1}$
for suitable $n$,
 {\sl if} the function $\ell: Z \to \G_a$ is supported on a single
root subgroup $U_\alpha.$

As an example consider the unipotent subgroup $U=U_4$ of $F_4.$
It is two step nilpotent and $15$-dimensional.
The center, $Z,$ is seven-dimensional, and we have
$$
\Phi(Z,T) = \{0122,1122,1222,1232,1242,1342,2342\}.$$
The group $U/Z$ is eight-dimensional, and we have
$$\Phi(U/Z,T)=\{0001,0011,0111,1111,0121,1121,1221,1231\}
$$
Observe that for each $\alpha \in \Phi(U/Z,T),$ the root
$1232-\alpha$ is also in $\Phi(U/Z,T).$  Hence we can define a
surjective homomorphism $l: U \to \cH_9$ such that
$$
l\left(x_{0122}(r_1)x_{1122}(r_2)x_{1222}(r_3)x_{1232}(r_4)x_{1242}(r_5)x_{1342}(r_6)x_{2342}(r_7)\right) =(0|0|r_4).
$$
As in the case of an ordinary Heisenberg group, there are many such
isomorphisms, and to pin down a specific one, we may use a
choice of ordering on the roots and the homomorphisms $x_\alpha,$
as well as the associated structure constants.
For example we may order $\Phi(U/Z,T)$ as
$$
0001,0011,0111,1111,0121,1121,1221,1231.
$$
Note that for each root $\alpha,$ the ``partner'' root $1232-\alpha$
is in a symmetric position.
The relevant structure constants are
$$
N(0001, 1231)= -1,\;
N(0011, 1221)= 1,\;
N(0111, 1121)= 1,\;
N(1111, 0121)= -1.$$
So, we may define $l:U \to \cH_9$ such that
$$
l\left( x_{0001}(r_1)x_{0011}(r_2)x_{0111}(r_3)x_{1111}(r_4)\right)
=(r_1, r_2, r_3, r_4|0|0),
$$
$$
l\left( x_{0121}(r_1)x_{1121}(r_2)x_{1221}(r_3)x_{1231}(r_4)\right)
=(0|-r_1, r_2, r_3, -r_4|0).
$$
In this case, there is only one other possible ordering,
which is
$$
0001,0011,0111,0121,1111,1121,1221,1231.
$$
Note that  the above construction does not work
if $1232$ is replaced by one of the other roots in $\Phi(Z,T)$:
for each other root $\beta \in \Phi(Z,T)$ there exists $\alpha
\in \Phi(U/Z,T)$ such that $\beta-\alpha$ is not a root.
On the other hand, one can check that $$
l\left(x_{0122}(z_1)x_{1122}(z_2)x_{1222}(z_3)x_{1232}(z_4)x_{1242}(z_5)x_{1342}(z_6)x_{2342}(z_7)\right)=(0|0|z_1+z_7)$$
$$
l\left( x_{0001}(r_1)x_{0011}(r_2)x_{1111}(r_3)x_{1121}(r_4)\right)
=(r_1, r_2, r_3, r_4|0|0),
$$
$$
l\left(x_{1221}(s_1)x_{1231}(s_2)x_{0111}(s_3) x_{0121}(s_4)\right)
=(0|-2s_1, 2s_2, -2s_3, -2s_4|0),
$$
gives a well defined homomorphism  $l: U \to \cH_9.$
(Note that
$$
x_{0122}(z_1)x_{1122}(z_2)x_{1222}(z_3)x_{1232}(z_4)x_{1242}(z_5)x_{1342}(z_6)x_{2342}(z_7)\mapsto z_4
$$
and
$$x_{0122}(z_1)x_{1122}(z_2)x_{1222}(z_3)x_{1232}(z_4)x_{1242}(z_5)x_{1342}(z_6)x_{2342}(z_7)\mapsto z_1+z_7$$
are in general position for the action of $M_4 \cong \GSpin_7$ on
the space of linear maps $Z \to \G_a,$ but
$$
x_{0122}(z_1)x_{1122}(z_2)x_{1222}(z_3)x_{1232}(z_4)x_{1242}(z_5)x_{1342}(z_6)x_{2342}(z_7)\mapsto z_i
$$
is not for $i \ne 4.$)

\subsection{Explicit formulae for the Weil representation}
In this short subsection, we record for convenience some explicit
formulae for the action of the Weil representation of
$\cH_{2n+1}(\A) \rtimes \Sp_{2n}(\A)$ which we shall use repeatedly
when dealing with cases involving theta functions. A reference is \cite{Perrin}.  We write $\Mat_{n\times
n}^{0}$ for the space of $n\times n$ matrices $X$ such that $_tX=X.$

For $x,y \in \A^n$ (realized as row vectors), $z \in \A,$ $g \in
GL_n(\A), \varepsilon =\pm 1,$ and $X \in \Mat_{n\times
n}^{0}(\A):$
\begin{equation}\label{E:SchrodingerRep-Formulae}
  \omega_\psi(0|0|z)\phi = \psi(z) \phi
\qquad  \omega_\psi(x|0|0)\phi(\xi) = \phi( \xi+x)
\qquad  \omega_\psi(0|y|0) \phi(\xi) =
  \psi( y  \,_t \xi)\phi(\xi)
  \end{equation}
          \begin{equation}\label{E:WeilRepActionOfSiegelParabolic}
     \omega_\psi\left(\bpm g&\\&g^*\epm
 ,\varepsilon \right)\phi(x) = \varepsilon\gm_{\det g}
 \phi(\xi g)\qquad \qquad  \omega_\psi\bpm I&X\\&I \epm \phi(\xi)
 = \psi\left( \frac12 \xi X \,_t\xi\right)\phi(\xi), \end{equation}
 where $\gm_a$ is given in terms of the function 
 $\omega$ defined on p. 377 of \cite{P} by the formula $\gm_a=\omega(1)\omega(-1/a).$ 
 
\begin{lemma}\label{Lem: Reduction to theta function on smaller symplectic group-- partial theta}
If $k < n$ are integers, embed $Sp_{2k}
\hookrightarrow Sp_{2n}$  and $\cH_{2k+1}\hookrightarrow \cH_{2n+1}$ by
$$
g \mapsto
\bpm I_{n-k} &&\\&g&\\&&I_{n-k}\epm,
\qquad
(x|y|z)_{2k+1} \mapsto (0,x|y,0|z)_{2n+1}.
$$

This induces an embedding $\iota: \cH_{2k+1} \rtimes \Sp_{2k}(\A)
\to \cH_{2n+1} \rtimes\Sp_{2n}(\A)$ such that $\omega_\psi^n \circ
\iota = \omega_\psi^k.$ Here $\omega_\psi^n$ and $\omega_\psi^k$
denote the Weil representations of $\cH_{2n+1} \rtimes\Sp_{2n}(\A)$
and $\cH_{2k+1} \rtimes \Sp_{2k}(\A),$ respectively. If    $\phi$ is
an element of the Schwartz space $S(\A^n),$ $u \in \cH_{2n+1}(\A),$
and $ \wt g \in
\Sp_{2n}(\A),$
define
\begin{equation}\label{partialTheta}
\wt \theta_{\phi|_k}^\psi (u\wt g) :=
  \sum_{\xi \in F^{k}}
 \omega_\psi (u\wt g )\phi( 0, \xi).
\end{equation}
Then
\begin{description}
\item[(i)]
$$
\iiFA {n-k} \wt \theta_\phi^\psi
 (( 0|0, y|0) u\wt  g) dy=\theta_{\phi|_k}^\psi (u\wt g)
 \qquad (\forall \; u \in \cH_{2n+1}(\A), \wt  g \in \Sp_{2n}(\A)).
 $$
\item[(ii)] For $u \in \cH_{2k+1}(\A), \wt  g \in \Sp_{2k}(\A),$ we have
$$\wt \theta_{\phi|_k}^\psi (\iota (u\wt g)) =\wt \theta^{\psi , 2k}_{\phi_1} (u\wt g),$$
where $\wt \theta^{\psi, 2k}_{\phi_1}$ is
 the theta function  on $\cH_{2k+1} \rtimes \Sp_{2k}(\A)$ defined using the function
$\phi_1(x) := \phi(0,x),$ which lies in $S(\A^k).$
Here $x \in \A^k$ and $0$ denotes the zero element of $\A^{n-k}.$
\item[(iii)]
The function $\wt \theta_{\phi|_k}^\psi$
satisfies
 \begin{equation}\label{partial theta-- GL equivariance}
 \wt \theta_{\phi|_k}^\psi \left[\bpm g_1&&\\&I_{2k}&\\&&_tg_1^{-1} \epm \bpm I_{n-k}&X&Y\\&I_{2k}&X'\\&&I_{n-k}
 \epm u\wt g\right]
 =\gm_{\det g_1} \wt \theta_{\phi|_k}^\psi (u\wt g),
 \end{equation}for all  $g_1 \in GL_{n-k}(\A), u \in \cH_{2n+1},
 \wt g \in
\Sp_{2n}(\A),$ and for all $X \in \Mat_{(n-k)\times(2k)}(\A),$
$X' \in \Mat_{(2k)\times(n-k)}(\A),$
 $Y \in
\Mat_{(n-k)\times(n-k)}(\A),$ such that the matrix above
is in $Sp_{2n}.$
 \end{description}
\end{lemma}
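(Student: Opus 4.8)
Throughout I identify $\A^n=\A^{n-k}\times\A^k$ and write the argument of $\phi\in S(\A^n)$ as $\xi=(\xi',\xi'')$ with $\xi'\in\A^{n-k}$, $\xi''\in\A^k$, and I use the standard formula $\wt\theta_\phi^\psi(h)=\sum_{\xi\in F^n}[\omega_\psi^n(h)\phi](\xi)$ for the full theta kernel. The idea is to read each assertion off from \eqref{E:SchrodingerRep-Formulae} and \eqref{E:WeilRepActionOfSiegelParabolic}. First I would record that the two displayed embeddings are compatible with the semidirect product structures --- a one-line check on the Heisenberg generators and on the elements $\bpm g&\\&g^*\epm$, $\bpm I&X\\&I\epm$ of $\Sp_{2k}$, using that the $\Sp_{2n}$-image of $\bpm g&\\&g^*\epm$ fixes the first $n-k$ position coordinates --- so that $\iota$ is a homomorphism. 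The equality $\omega_\psi^n\circ\iota=\omega_\psi^k$ I read as the statement that under $\iota$ the operators $\omega_\psi^n$ act on $S(\A^n)=S(\A^{n-k})\,\hat\otimes\,S(\A^k)$ by $\mathrm{id}\,\hat\otimes\,\omega_\psi^k$, i.e.\ $[\omega_\psi^n(\iota(u\wt g))\phi](\xi',\xi'')=[\omega_\psi^k(u\wt g)(\phi(\xi',\cdot))](\xi'')$, with $\xi'$ a spectator; for the Heisenberg generators and the Siegel parabolic of $\Sp_{2k}$ this is immediate from \eqref{E:SchrodingerRep-Formulae}--\eqref{E:WeilRepActionOfSiegelParabolic}, and for one further Weyl element generating the rest of $\Sp_{2k}$ one imports the corresponding partial Fourier transform formula from \cite{Perrin} (or invokes Stone--von Neumann uniqueness slotwise). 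Part (ii) then follows immediately: putting $\xi'=0$ and summing over $\xi''\in F^k$ turns $[\omega_\psi^n(\iota(u\wt g))\phi](0,\xi'')=[\omega_\psi^k(u\wt g)\phi_1](\xi'')$ into $\wt\theta_{\phi|_k}^\psi(\iota(u\wt g))=\sum_{\xi''\in F^k}[\omega_\psi^k(u\wt g)\phi_1](\xi'')=\wt\theta_{\phi_1}^{\psi,2k}(u\wt g)$.

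For part (i) I would simply compute. Since the $y$-coordinates of $(0|0,y|0)_{2n+1}$ occupy the last $n-k$ slots, \eqref{E:SchrodingerRep-Formulae} gives $\omega_\psi^n((0|0,y|0))\Phi(\xi)=\psi((0,y)\cdot\,_t\xi)\Phi(\xi)=\psi(y\cdot\,_t\xi')\Phi(\xi)$. Hence, with $\Phi:=\omega_\psi^n(u\wt g)\phi$,
$$\wt\theta_\phi^\psi((0|0,y|0)u\wt g)=\sum_{\xi'\in F^{n-k}}\sum_{\xi''\in F^k}\psi(y\cdot\,_t\xi')\,\Phi(\xi',\xi''),$$
and $\iiFA{n-k}$ of this vanishes term by term unless $\xi'=0$, by orthogonality of characters (for fixed $\xi'\in F^{n-k}$ the map $y\mapsto\psi(y\cdot\,_t\xi')$ is a character of $(F\bs\A)^{n-k}$, trivial iff $\xi'=0$); what remains is $\sum_{\xi''\in F^k}\Phi(0,\xi'')=\theta_{\phi|_k}^\psi(u\wt g)$.

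Part (iii) is where the work is, and the plan is to reduce it to two applications of \eqref{E:WeilRepActionOfSiegelParabolic}. Write $p=\bpm g_1&&\\&I_{2k}&\\&&\,_tg_1^{-1}\epm\bpm I_{n-k}&X&Y\\&I_{2k}&X'\\&&I_{n-k}\epm$ for the parabolic element in \eqref{partial theta-- GL equivariance}. One first notes that $\bpm g_1&&\\&I_{2k}&\\&&\,_tg_1^{-1}\epm=\bpm g&\\&g^*\epm$ with $g=\diag(g_1,I_k)\in GL_n$ (here the lower transpose conventions must be tracked, since the lower transpose of $\diag(A,B)$ is $\diag(\,_tB,\,_tA)$). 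Next, a dimension count identifies the unipotent radical $N$ of the parabolic of $Sp_{2n}$ with Levi $GL_{n-k}\times Sp_{2k}$ as a product $N=U_1U_2$, where $U_1$ consists of the $\bpm u_1&\\&u_1^*\epm$ with $u_1=\bpm I_{n-k}&X_1\\&I_k\epm\in GL_n$ (the block connecting position coordinates $1,\dots,n-k$ to position coordinates $n-k+1,\dots,n$), and $U_2=N\cap U_{\mathrm{Sieg}}(Sp_{2n})$ consists of the $\bpm I_n&Z\\&I_n\epm$ with $\,_tZ=Z$ and $Z$ supported on rows $1,\dots,n-k$ (the ``$X_2$'' and ``$Y$'' contributions) together with the rows $n-k+1,\dots,n$ meeting columns $k+1,\dots,n$ (the ``$X'$'' contribution). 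Factoring $\bpm I_{n-k}&X&Y\\&I_{2k}&X'\\&&I_{n-k}\epm$ through $N=U_1U_2$ gives $p=\bpm\tilde g&\\&\tilde g^*\epm\bpm I_n&Z\\&I_n\epm$ with $\tilde g=\bpm g_1&g_1X_1\\&I_k\epm$, $\det\tilde g=\det g_1$ (the second factor is unipotent, so the metaplectic cocycle contributes nothing here). Then \eqref{E:WeilRepActionOfSiegelParabolic} yields $[\omega_\psi^n(p)\Psi](\xi)=\gm_{\det g_1}\,\psi\big(\tfrac12(\xi\tilde g)Z\,\,_t(\xi\tilde g)\big)\,\Psi(\xi\tilde g)$ for every $\Psi\in S(\A^n)$. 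Evaluating at $\xi=(0,\xi'')$: the upper unitriangular $\tilde g$ fixes $(0,\xi'')$, while $(0,\xi'')Z$ involves only the rows $n-k+1,\dots,n$ of $Z$, which are supported in columns $k+1,\dots,n$, and $\,_t(0,\xi'')=(\,_t\xi'',0)$ is supported in positions $1,\dots,k$, so the quadratic term vanishes; hence $[\omega_\psi^n(p)\Psi](0,\xi'')=\gm_{\det g_1}\Psi(0,\xi'')$. Taking $\Psi=\omega_\psi^n(u\wt g)\phi$ and summing over $\xi''\in F^k$ gives \eqref{partial theta-- GL equivariance}.

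The step I expect to be the main obstacle is not any single deep point but the matrix bookkeeping in part (iii): making the decomposition $N=U_1U_2$ and the identification $\bpm g_1&&\\&I_{2k}&\\&&\,_tg_1^{-1}\epm=\bpm g&\\&g^*\epm$ fully explicit, correctly tracking which position- and $y$-coordinates the blocks $X,X',Y$ connect, and verifying the support of $Z$ --- routine but error-prone given the anti-diagonal and ``lower transpose'' conventions. The one input not contained in \eqref{E:SchrodingerRep-Formulae}--\eqref{E:WeilRepActionOfSiegelParabolic} is the Weil action of a Weyl element generating $\Sp_{2k}$, needed for $\omega_\psi^n\circ\iota=\omega_\psi^k$ and hence for part (ii); this is imported from \cite{Perrin}.
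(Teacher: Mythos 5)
Your proposal is correct and takes the same approach as the paper, which dispatches the lemma with the single sentence ``This follows from the formulas for the Weil representation given above''; you have simply written out the block-matrix bookkeeping (factoring the parabolic element through $\bpm\tilde g&\\&\tilde g^*\epm\bpm I_n&Z\\&I_n\epm$ with $\tilde g=\bpm g_1&g_1X_1\\&I_k\epm$, checking the support of the relevant rows of $Z$, and applying \eqref{E:SchrodingerRep-Formulae}--\eqref{E:WeilRepActionOfSiegelParabolic}) in full. You are also right that the one input not literally among the paper's displayed formulas is the Weil action of a Weyl element (needed to confirm $\omega_\psi^n\circ\iota=\omega_\psi^k$ on all of $\Sp_{2k}$), which is standard and correctly attributed to \cite{Perrin} or a Stone--von Neumann argument.
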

\begin{proof}
This follows from the formulas for the Weil
representation given above.
\end{proof}

\subsection{Unfolding in the presence of theta functions}
\label{subsection: Unfolding in the presence of theta functions}
In this subsection, we give a discussion similar to section \ref{Subsection:
The General Process}, for an integral of the type defined  in equation \eqref{int6}.
As in
subsection \ref{Subsection:  The General Process}, we express the
integral  \eqref{int6} as a double sum indexed first by $w \in P\bs H
/ P_\O,$ and then by $\nu \in Q_{w}\bs M / C,$ with $Q_{w} = M \cap
w^{-1} P w.$ For $\nu \in M$ we define $L_\nu = H \cap (w\nu)^{-1} P
(w\nu)$ and $V=Q \cap wU w^{-1}.$ Then in this case $$\begin{aligned}
I_{w,\nu}
&=
\il_{L_\nu(F) \bs C(\A)}\il_{U^{w\nu}_\O(F) \bs U_\O(\A)}
\varphi_\pi(g) f_\tau( w\nu ug,s)
\wt \theta_\phi^\psi(l(u)\rho(g))\psi_{U_\O}( l'(u))
 \, du\, dg\\
&=
\il_{L_\nu(F) \bs C(\A)}
\varphi_\pi(g)
\il_{U^{w\nu}_\O(\A) \bs U_\O(\A)}
\iq{U^{w\nu}_\O}
 f_\tau(w\nu u_1 ug,s)
\wt \theta_\phi^\psi(l(u_1u)\rho(g))\psi_{U_\O}( l'(u_1u))
\, du_1\, du\, dg .
\end{aligned}
$$
Here $l$ and $l'$ are the projections of $U_\O/V_\O^{(3)}$
onto its Heisenberg and abelian factors as in section
\ref{section: constructing global integrals}.
The definition of ``open orbit type'' for an integral of type
\eqref{int6}, of course reflects the dimension of the theta
representation.
\begin{definition}
An integral of the type  \eqref{int6} is said to be of {\bf open
orbit type} if there is a unique $w_0 \in P \bs H/ P_\O$ and a
unique $\nu_0 \in Q_{w_0}\bs M/C$ such that
$$
\dim L_{\nu_0} + \dim U^{w_0\nu_0}_\O
= \dim \pi+ \dim \tau + \dim \Theta^\psi.
$$
It is weakly open orbit type if the number $\nu_0$ satisfying this identity
is greater than one, but the integral $I_{w_0, \nu_0}$ vanishes for
all $\nu_0$ but one.  It is preWhittaker if
\begin{enumerate}
\item it is weakly open orbit type,
\item $I_{w, \nu}$ vanishes for all pairs $(w, \nu)\ne (w_0, \nu_0),$
\item $I_{w_0, \nu_0}$ is equal to a Whittaker integral, with no conditions on $\tau$ beyond those listed in the table at the end of section \ref{section: constructing global integrals}.
\end{enumerate}
It is weakly preWhittaker if this the first two conditions hold, and $I_{w_0, \nu_0}$ is equal to a Whittaker integral, under some additional
condition on $\tau$ (e.g., that it is an Eisenstein series).
\end{definition}

In this case, we shall refer to the
integral $I_{w_0, \nu_0}$ as the ``contribution
from the open orbit,'' and, introduce a corresponding
``inner period,'' analogous to \eqref{int45}, and
given by
\begin{equation}\label{Inner Period-- Theta Function Case}
\iq{L_\nu}
\varphi_\pi(h)
\iq{U^{w\nu}_\O}
 \varphi_\tau(m( w\nu uh (w\nu)^{-1}  ))
\wt \theta_\phi^\psi(l(u)\rho(h))\psi_{U_\O}( l'(u))
\, du\, dh.
\end{equation}

The integral $I_{w,\nu}$ can be simplified
in some cases
using lemma \ref{Lem: Reduction to theta function on smaller symplectic group-- partial theta}.

It should be noted that for integrals of this type, it is sometimes
necessary to take $\pi$ to be a genuine representation on a covering group of $C(\A).$  In this case, uniqueness of local Whittaker models, and
as a consequence Whittaker integrals need not be Eulerian.
Thus, the question
of whether a given integral is preWhittaker may not be directly
relevant to the question of whether it is Eulerian.  Nevertheless, particularly in  view of \cite{WMD1},\cite{WMD2},\cite{WMD3},\cite{WMD4}, it
seems that preWhittaker integrals which involve a cusp form on a covering
group are certainly not without interest.

\section{\bf An example with a theta function}\label{sec: Example with theta}

Take $\O =\mathbf{A_1}.$
The corresponding weighted Dynkin diagram is
$$\ff1000.$$
Hence $P_\O=P_1.$
The unipotent radical
of $P_1$ is a Heisenberg group in $15$
variables. The center is $U_{2342}.$
The Levi, $M_\O$ is isomorphic to $GSp_6.$

Define a character
 of $U^{(2)}_\O$ by $\psi_{U_\O}(u) = \psi(u_{2342}).$
 The stabilizer $C$ of this character in $M_\O$ is
 the derived group of $M_\O,$ isomorphic to
 $Sp_6.$  We identify it with $Sp_6$
 using the isomorphism $M_\O \to GSp_6$
 given in section \ref{s:MaximalParabolicsOfF4}.

 Write $\Theta^\psi_7$ for the theta representation
 of $\cH_{15}(\A) \rtimes \Sp_{14}(\A),$
 and let $l: U_\O \to \cH_{15}$ be an isomorphism
 such that $l(x_{2342}(z))=(0|0|z).$  It will be convenient
 to choose different isomorphisms $l$ for different cases below.
The isomorphism $l$ also determines  a homomorphism $C \to Sp_{14}.$
 Since the action of $C$ on $U_\O/U_{2342}$ is equivalent
 to the third fundamental representation of $Sp_6,$ we denote
 this homomorphism by $\varpi_3.$
 See \cite{sato-kimura},
pp. 107-108
 \cite{igusa-spinor}, \S 5 for some discussion of this representation.
 Note that on the level
 of matrices, $\varpi_3$ depends on $l,$ which will vary from case
 to case.

The image of $Sp_6(\A)$ in $Sp_{14}(\A)$
does not split under the covering map $\Sp_{14}(\A)\to Sp_{14}(\A).$
Thus an element of $\Theta^{\psi}_7$ is a genuine function defined
on the double cover of $C(\A)\cong Sp_6(\A),$ which we denote $\bC.$
   In addition let
$\bG$ denote  the metaplectic double cover of $F_4(\A)$ defined
using the Matsumoto cocycle. Then $U(\A) \rtimes \bC \hookrightarrow
\bG.$

The options in this case were discussed in detail in section
\ref{section: constructing global integrals}.  In brief,
since each vector $\wt \theta^\psi_\phi$ in the space of $\Theta^{\psi}_7$ restricts to  a genuine function on $\Sp_6(\A),$ our global integral should include either an Eisenstein
series which is genuine and a cusp form which is not, or an Eisenstein series
which is not genuine and a cusp form which is.  If
$P=P_2$ or $P_3$ then $\dim \tau =0,$ which means that $\tau$
is a character, and this is incompatible with $E_\tau$ being genuine.  Therefore, in these cases it must be the cusp form which is genuine.
If $P=P_1$ or $P_4,$ then as far as we know both options are available.
However, our arguments will be the same regardless of which function
is genuine.  For concreteness, we assume that the cusp form is genuine
in these cases as well.  Thus, our integral is
 $$
 \int_{C(F)\bs C(\A)} \wt\varphi_{\pi}(g)
 \int_{U(F)\bs U(\A)}
 \wt\theta^\psi_\phi(l(u)\varpi_3(g)) E_\tau(ug,s)\; du \; dg,
 $$
 where $\wt \varphi_\pi$ is a cusp form in the space of some genuine
irreducible  cuspidal automorphic representation of $\Sp_6(\A),$
and $E_\tau$ is an Eisenstein series defined on the group $F_4(\A),$
and $\wt \theta_\phi^\psi$ is a theta series from the representation
$\Theta_7^\psi$ of $\cH_{15}(\A) \rtimes \Sp_{14}(\A).$

  Referring to the unfolding process sketched above, notice that
 $C$ contains every unipotent element of $M_\O.$  It follows easily that for all standard
 parabolic subgroups $P$ we have
 $$
 P \bs H/ CU_\O = P\bs G/P_\O = W(M,T) \bs W/W(M_\O,T).
 $$
 In particular, the integral is of open orbit type for all $P,$
 and one can take $\nu_0$ to be the identity in all cases.
 Since $\nu_0$ is the identity, it follows that
 $L_{\nu_0} = Q_{w_0} \cap C.$  This is a parabolic subgroup
 of $C$ and it's Levi part is equal to the intersection
 of $C$ with the Levi of $Q_{w_0}.$
 \subsection{$\bf P = P_1$}
This case is an example of the phenomenon discussed
in subsection \ref{Subsection: A Special Case}.
In this case, $w_0=w[1,2,3,2,1,4,3,2,1,3,2,4,3,2,1],$ while
$Q_{w_0}=M_\O = M_1.$  As already mentioned, the integral
\eqref{int6} is of open orbit type for this Fourier coefficient,
regardless of the parabolic subgroup used to form the Eisenstein series.
Further, $\nu_0 =$ identity, $L_{\nu_0}=M_\O = M_1,$ and $V$ is
trivial. Hence, $I_{w_0,\nu_0}$ is given by
$$
\iq{C}
\il_{U_\O(\A)}
f_\tau(w_0gu, s) \wt\varphi_\pi( g) \wt\theta_\phi^\psi
( \varpi_3(g)l(u))
\, du\, dg,
$$
while the inner period
\eqref{Inner Period-- Theta Function Case}
is given by
$$
\iq{C} \varphi_\tau(g)  \wt\varphi_\pi(g)
\wt\theta_\phi^\psi(g)\, dg.
$$

\begin{lem}\label{lem: conj one for theta case p1}
Conjecture \ref{conj1} is satisfied in this case.
\end{lem}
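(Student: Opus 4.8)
The starting point is the observation that here $P=P_\O=P_1$, so we are exactly in the situation of subsection \ref{Subsection: A Special Case}: the longest double coset representative $w_0=w[1,2,3,2,1,4,3,2,1,3,2,4,3,2,1]$ satisfies $U_{\O,w_0}=U_\O$ and $Q_{w_0}=M_\O=M_1$, the space $Q_{w_0}\bs M_\O/C$ is a single point, and $\nu_0=e$; the dimension identity \eqref{id1} holds and the contribution from the open orbit is the ``inner period'' $\iq{C}\varphi_\tau(g)\wt\varphi_\pi(g)\wt\theta_\phi^\psi(g)\,dg$ already written above. Since open orbit type has been established, the entire content of Conjecture \ref{conj1} in this case is the vanishing of $I_w$ (the sum over $\nu$ of the integrals \eqref{Eq: Contribution from the open orbit}, adapted to the theta setting as in subsection \ref{subsection: Unfolding in the presence of theta functions}) for every other $w\in P_1(F)\bs F_4(F)/P_1(F)$. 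As $M_1$ has type $C_3$, these double cosets are parametrized by $W(C_3)\bs W(F_4)/W(C_3)$, a small explicit finite set, so the proof reduces to a case-by-case check; I would first tabulate, for each $w\ne w_0$, the subgroup $U_{\O,w}=U_\O\cap w^{-1}U^-w$, the parabolic $Q_w=M_\O\cap w^{-1}P_1w$, and the partition of $\Phi(U_\O,T)$ into roots sent by $w$ into $\Phi(U(P_1),T)$, into $\Phi(M(P_1),T)$, and into $\Phi(U^-,T)$.

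For each $w\ne w_0$ the vanishing of $I_w$ should come from one of two mechanisms, the dominant one being cuspidality of $\pi$. Running the first steps of the unfolding of subsection \ref{subsection: Unfolding in the presence of theta functions}, and, where convenient, integrating out part of $U_\O$ against $\wt\theta_\phi^\psi$ and invoking Lemma \ref{Lem: Reduction to theta function on smaller symplectic group-- partial theta} to replace the $14$-variable theta function by a theta function on a smaller symplectic group, one is left in each case with an integral whose integrand, viewed as a function of $g\in C(\A)\cong Sp_6(\A)$, is invariant under $N_Q(\A)$ for the unipotent radical $N_Q$ of some proper parabolic subgroup of $C$ — the invariance being supplied by the left $U(P_1)(\A)$-invariance of $f_\tau$ together with the transformation formulas \eqref{E:SchrodingerRep-Formulae} for the Weil representation. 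Such an integral factors through the constant term of the cusp form $\wt\varphi_\pi$ along that parabolic, hence is zero; this is the same device used in the excerpt to prove $I^0_{w_0,\nu_0}=0$ in the $P=P_2$ example. For any residual cell not handled this way, I would instead locate a root $\alpha\in\Phi(U_\O,T)$ with $w\alpha\in\Phi(U(P_1),T)$ which survives in the $U_\O^w(F)\bs U_\O^w(\A)$ portion of the integration after the $\mu$-summation is collapsed, and — choosing, as permitted by Lemma \ref{l:HeisenbergIsomorphisms} and the subsequent discussion, an isomorphism $l:U_\O\to\cH_{15}$ adapted to the cell so that $U_\alpha$ lands in the ``$y$''-polarization and no other free variable pairs with it — extract an inner integration $\int_{F\bs\A}\psi(r)\,dr=0$.

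The main obstacle will be the bookkeeping forced by the theta function rather than the combinatorics of $W(F_4)$ per se. For each $w$ one must pick a polarization of $U_\O/U_{2342}$ that is simultaneously compatible with the Bruhat cell (so that the root subgroup one wants to exploit genuinely lies in a Lagrangian and is not absorbed into the $\A$-integration produced by collapsing $\mu$), and one must track the $F_4$ structure constants of \cite{Gilkey-Seitz} to be certain the additive character that appears — or the unipotent subgroup of $C$ under which the integrand turns out to be invariant — is nontrivial/proper. A secondary difficulty is that the two mechanisms are not cleanly separated: for some intermediate cells one will need first to integrate out a sublieisenberg part against $\wt\theta_\phi^\psi$, then apply Lemma \ref{Lem: Reduction to theta function on smaller symplectic group-- partial theta}, and only then produce either the vanishing character or the constant term of $\wt\varphi_\pi$. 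The genuine (as opposed to routine) part of the work is checking that the resulting arguments together exhaust $W(C_3)\bs W(F_4)/W(C_3)\smallsetminus\{w_0\}$ with no cell slipping through.
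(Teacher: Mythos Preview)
Your plan correctly identifies the special-case structure, the double-coset parametrization, and two of the three vanishing mechanisms the paper uses. The genuine gap is that you never invoke the hypothesis $\O(\tau)=\{(2^21^2)\}$, and for the cell $w=w[1,2,3,2,4,3,2,1]$ this hypothesis is indispensable. In the paper's argument, after applying Lemma~\ref{Lem: Reduction to theta function on smaller symplectic group-- partial theta} (only $1342$ goes into $\Phi(U,T)$, so one obtains $\wt\theta^\psi_{\phi|_6}$), one unfolds the remaining sum over $\xi\in F^6$ and identifies it with the $GL_3$-action on symmetric $3\times 3$ matrices $\Xi$. Ranks~$0$ and~$1$ die by cuspidality of $\pi$ (since $\Xi^{\ad}=0$). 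But for $\Xi$ of rank~$3$, the character $\psi'_\xi$ on the Siegel radical $N$ is in general position, so $f_\tau^{(N,\psi'_\xi)}$ is a Fourier coefficient attached to $(2^3)$; it vanishes only because $(2^3)>(2^21^2)$. For $\Xi$ of rank~$2$, the paper must go further: a secondary Fourier expansion shows the surviving term would produce Fourier coefficients of $\tau$ attached to $(42)$ or $(3^2)$, both $>(2^21^2)$, and their vanishing forces an extra $U_{0010}U_{0011}(\A)$-invariance of $f_\tau^{(N,\psi'_\xi)}$, which in turn yields a constant term of $\wt\vph_\pi$ along the $GL_2\times SL_2$ parabolic.

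Neither of your two mechanisms reaches the rank-$2$ or rank-$3$ contributions on its own: the Fourier coefficient $\wt\vph_\pi^{(N,\psi_\xi)}$ for $\Xi$ of full rank is attached to $(2^3)$ in $Sp_6$ and is generically nonzero for generic $\pi$, so cuspidality alone cannot kill it; and for this $w$ the center $U_{2342}$ is already sent to $\Phi(U^-,T)$, so the $\int_{F\bs\A}\psi(r)\,dr$ trick is unavailable. You should therefore add a third mechanism to your checklist --- vanishing of Fourier coefficients of $\tau$ attached to orbits strictly larger than $(2^21^2)$ --- and expect to combine it with cuspidality of $\pi$ in the manner above. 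The cells $e$, $w[1]$ (center mechanism) and $w[1,2,3,2,1]$ (pure cuspidality after partial theta) go as you anticipated.
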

\begin{proof}
The space $P \bs H/ C U_\O =P\bs H/P_\O$ is represented by
     $$e,
     w[1], \;
     w[1,2,3,2,1], \;
     w[1,2,3,2,4,3,2,1],\;
     \text{ and } w_0 .$$
Let $w$ denote one of these representatives, and let
$$
I_{w}= \il_{Q^0_w(F) \bs Sp_6(\A)}
\wt \vph_\pi(g)
\il_{U_\O^w(F) \bs U_\O(\A)}
f_\tau( wug, s) \wt\theta_\phi^\psi(l(u)\varpi_3(g)) \, du\, dg.
$$
Here $Q^0_w = Sp_6 \cap w^{-1} P w.$
We must show that $I_w=0$ for all $w\ne w_0.$

If
$w=e$ or $w[1],$ then
 $w\cdot 2342\in \Phi(U,T)$ which means that
 $f_\tau(w u g, s)$ is invariant by $U_{2342}(\A)$ on the left.
 This clearly forces the integral $I_{w}$ to be zero.

If $w= w[1,2,3,2,1],$ then $Q_w^0$ is the maximal parabolic subgroup
of $Sp_6$ with Levi isomorphic to $Sp_4 \times GL_1.$
Fix an isomorphism of $U_\O$ with $\cH_{15}$ by ordering the roots as
$$
1000, 1100, 1110, 1120, 1220;  \quad 1111, 1121| 1221, 1231; \quad1122, 1222, 1232, 1242, 1342.
$$
Here, extra spacing has been used to reflect the way that
$L_\O^{(1)}$ decomposes as a direct sum of three invariant
subspaces under the action of the standard Levi subgroup
of $Q_w.$   Also,
the middle of the list is marked with a vertical
bar.  This will be standard practice.

Note that the three $M_\O$-invariant subspaces
actually form a flag which is preserved by the action of the unipotent
radical of $Q_w.$  Thus, if $R$ is the maximal standard parabolic of $Sp_{14}$ which
has Levi isomorphic to $GL_5 \times Sp_4,$ and if $\varpi_3$ is defined
by ordering the roots as above, then $\varpi_3$ will map
the Levi of $Q_w$ into the Levi of $R$ and the unipotent radical
of $Q_w$ into the unipotent radical of $R.$

Then $w\alpha \in \Phi(U,T)$ for all of the last five roots.
Therefore, by lemma \ref{Lem: Reduction to theta function on smaller symplectic group-- partial theta},
$$
I_w = \il_{Q^0_w(F) \bs Sp_6(\A)}
\wt \vph_\pi(g)
\il_{U_\O^w(F)V_1(\A) \bs U_\O(\A)}
f_\tau( wug, s) \wt\theta_{\phi|_2}^\psi(l(u)\varpi_3(g)) \, du\, dg,
$$
where $V_1$ is the five dimensional abelian unipotent subgroup
of $U$ corresponding to the last five roots listed above.

 Since both $h \mapsto f_\tau(wh)$ and $h \mapsto \wt\theta_{\phi|_2}^\psi(\varpi_3(h))$ are invariant by the unipotent radical of $Q_w^0,$ the integral $I_w$ vanishes by the cuspidality of $\wt \vph_\pi.$

 Finally, suppose $w=[1,2,3,2,4,3,2,1].$  Then $Q_{w}^0$ is the
 maximal standard parabolic subgroup of $Sp_6$ whose Levi subgroup
 is isomorphic to $GL_3.$
 In order to analyze $I_w$ in this case, we shall use a different isomorphism
 $U \to \cH_{15},$ obtained by ordering the roots as follows:
       \begin{equation}\label{F4A1P3-U->H}
1000;
\;\;
1100,
1110,1111,1120,1121,1122|\;\;1220,1221,1222,1231,1232,
1242; \;\; 1342.
\end{equation}
Thus
$$\begin{aligned}
&l(x_{1000}(r_1)x_{1100}(r_2),...,x_{1122}(r_7))=(r_1,r_2,r_3,r_4,r_5,r_6,r_7|0|0)
\qquad l(x_{2342}(z)) = (0|0|z),\\
&l(x_{1220}(y_1)x_{1221}(y_2) x_{1222}(y_3)
x_{1231}(y_4)x_{1232}(y_5) x_{1242}(y_6) x_{1342}(y_7))\\
&\hskip 3in=(0|y_1,-2y_2,y_3,2y_4,-2y_5,y_6,-y_7|0).
\end{aligned}$$

Since $w\cdot 1342=1342 \in \Phi(U,T),$ lemma
\ref{Lem: Reduction to theta function on smaller symplectic group-- partial theta} implies that
$$
I_w = \il_{Q^0_w(F) \bs Sp_6(\A)}
\wt \vph_\pi(g)
\il_{U_\O^w(F)U_{1342}(\A) \bs U_\O(\A)}
f_\tau( wug, s) \wt\theta_{\phi|_6}^\psi(l(u)\varpi_3(g)) \, du\, dg.
$$
Next we unfold the partial theta function
$$
\wt\theta_{\phi|_6}^\psi(h) = \sum_{\xi \in F^6}
[\omega_\psi(h) \phi]( 0, \xi).
$$
The standard Levi subgroup of $Q_w^0,$ isomorphic to $GL_3,$
acts on $F^6$ (which may be identified with the abelian
subgroup of $U_\O$ corresponding to the six roots
$1100,\dots 1122$) by a representation which is equivalent to the
symmetric square representation of $GL_3.$
Choose a collection $S$ of orbit representatives, and for each representative $\xi$ let $O_\xi$ denote the stabilizer in $GL_3.$
Let $N$ denote the unipotent radical of $Q_w^0.$
Then we have
$$
I_w = \sum_{\xi \in S}
\il_{O_\xi(F)N(F) \bs Sp_6(\A)}
\wt\vph_\pi(g)
\il_{U_\O^w(F) U_{1342}(\A)\bs U_\O(\A)}
[\omega_\psi( l(u) \varpi_3(g) ) \phi](0, \xi)
f_\tau( w ug,s)\, du \, dg.
$$
Let $N'(F)$ denote the six-dimensional abelian subgroup
of $U_\O(\A)$ corresponding to the roots
$1220,1221,1222,1231,1232,
1242.$  Then $wN'w^{-1} =N.$    One the other hand,
$wNw^{-1}$ lies in $U.$ Factoring the integration on $N$ and $N',$
we find that
$$
I_w = \sum_{\xi \in S}
\il_{O_\xi(F)N(\A) \bs Sp_6(\A)}
\wt\vph_\pi^{(N, \psi_{\xi})}(g)
\il_{U_\O^w(F)N'(\A)U_{1342}(\A)\bs U_\O(\A)}
[\omega_\psi( l(u) \varpi_3(g) ) \phi](0, \xi)
f_\tau^{(N, \psi'_\xi)}(w ug,s)\, du \, dg,
$$
where
$$
\wt\vph_\pi^{(N, \psi_{\xi})}(g) =\iq{N} \vph_\pi(ng)\psi_\xi(n)\, dn,
\qquad
f_\tau^{(N, \psi'_\xi)}(w ug,s) =\iq{N} f_\tau(nw ug,s) \psi_\xi'(n)\, dn,
$$
and $\psi_\xi$ and $\psi_\xi'$ are two characters of $N$ which
depend on $\xi \in F^6.$   We need to calculate this dependence
precisely.

First, take
$$
n'(y) = x_{1220}(y_1) x_{1221}(y_2) x_{1222}(y_3) x_{1231}(y_4) x_{1232}(y_5) x_{1242}(y_6)
$$
Then
$$
l(n'(y)) = (0|y_1,-2y_2,y_3,2y_4,-2y_5,y_6,-y_7|0),
$$
so
$$
[\omega_\psi( n'(y)h) \phi_1]( 0,\xi)
= \psi( y \,_t\xi) = \psi( y_1\xi_7-2y_2\xi_6+y_3\xi_5+2y_4\xi_4-2y_5\xi_3 + y_6\xi_2),$$
for  $\xi = (\xi_2, \dots, \xi_7) \in F^6.$
On the other hand
$$
wn'(y) w^{-1} = x_{0100}(y_1) x_{0110}(y_2)
x_{0120}(y_3) x_{0111}(y_4) x_{0121}(y_5)x_{0122}(y_6),
$$
which is identified with
$$
\bpm I_3&Y \\ &I_3 \epm \in Sp_6, \qquad Y = \bpm y_4&-y_5&y_6\\
-y_2& y_3& -y_5 \\ y_1& -y_2& y_4 \epm.
$$
Thus
$$
\psi_\xi'\bpm I_3&Y \\ &I_3 \epm
=\psi( \Tr( \Xi \cdot Y)), \qquad \text{ where }\Xi = \bpm \xi_4&\xi_6&\xi_7\\ \xi_3&\xi_5 & \xi_6\\
\xi_2&\xi_3&\xi_4 \epm.
$$

In order to describe $\psi_\xi,$ one needs to compute the restriction
of $\varpi_3$ to $N.$
Let
$$e_{ij}'=e_{ij}-e_{15-j,15-i}, \qquad
e_{ij}'' = e_{ij}+e_{15-j,15-i}.$$
Here $e_{ij}$ denotes the $14\times 14$ matrix with a $1$ at $i,j$ and $0$'s everywhere else.
Thus $Sp_{14}$ contains $I_{14}+re_{ij}'$ if $1 \le i,j\le 7,$
$ I_{14}+re_{ij}''$ if $1 \le i \le 7 < j < 15-i,$ and $I_{14}+re_{ij}$ if $i+j=15.$
(Here, $I_{14}$ is the $14\times 14$ identity matrix.)
We have
\begin{equation}\label{Example With a Theta; P2 : varpi3 of SiegelParabolic}
\begin{aligned}
\varpi_3( x_{0110}(r)) = I_{14}-re'_{13}-r^2e''_{18}-2r e''_{38}+2re''_{49},&\qquad
\varpi_3( x_{0100}(r)) = I_{14}+re'_{12}+re''_{58}-2e_{69},
\\ \varpi_3( x_{0111}(r)) = I_{14}-re'_{14}-r^2e''_{1,10}+2re''_{39}-2re''_{4,10}&\qquad
\varpi_3( x_{0120}(r)) = I_{14}+re_{15}'+re_{28}''-2re_{4,11}\\\varpi_3( x_{0121}(r)) = I_{14}+re_{16}'-r^2e_{1,13}''-2re_{29}''+2re_{3,11}''&\qquad
\varpi_3( x_{0122}(r)) = I_{14}+re_{17}'+re_{2,10}''-2re_{3,12}
\end{aligned}
\end{equation}
It follows from \eqref{E:WeilRepActionOfSiegelParabolic},\eqref{Example With a Theta; P2 : varpi3 of SiegelParabolic},  that
$$\begin{aligned}&
\omega_\psi\left( \varpi_3\left[
    \bpm I_3&X\\&I_3\epm g \right]
    \right)\phi( 0, \xi_2, \dots , \xi_7) \\&\qquad
    =\psi(
-2 (\xi_4 \xi_5 - \xi_3 \xi_6) x_{1,1}
+2 (\xi_2 \xi_6 -\xi_3 \xi_4) x_{1,2}
-( \xi_3^2- \xi_2 \xi_5 )x_{1,3}
\\&\hskip 2in
+2( \xi_3 \xi_7 -\xi_4 \xi_6) x_{2,1}
-( \xi_4^2
- \xi_2 \xi_7) x_{2,2}
- (\xi_6^2- \xi_5 \xi_7 )x_{3,1}
 )\\&\qquad\qquad \times
   \omega_\psi\left( \varpi_3\left(
    g \right)
    \right)\phi( 0, \xi_2, \dots , \xi_7).\end{aligned}
$$
Thus
$\psi_\xi \bpm I_3&X\\&I_3\epm =
\psi(-\Tr(\Xi^{\ad} X))$ where $\Xi$ is as above,
and $\Xi^{\ad}$
is  the matrix whose $i,j$ entry is the determinant of the $i,j$ minor
of $\Xi.$

Now, if $\Xi$ is trivial or of rank one, then $\Xi^{\ad}$ is trivial, and hence
$\wt \vph_\pi^{(N, \psi_\xi)}$ vanishes identically.
On the other hand, if $\Xi$ is of rank three, then $f_\tau^{(N, \psi'_\xi)}$
is a Fourier coefficient of $f_\tau,$ which is attached to the
orbit $(2^3).$  According to the table, $\tau$ is attached to $(2^21^2),$
so $f_\tau^{(N, \psi'_\xi)}$ vanishes in this case.

This leaves the set of symmetric matrices of rank two.  This set
 is a single orbit
under the action of $GL_3,$ and we may choose the matrix
$\Xi$ such that $\xi_6=1$ and the rest are zero as a representative.

We claim that $f_\tau^{(N, \psi_\xi)}$ is invariant by the group $U_{0010}U_{0011}$ for this choice of $\xi.$  To see this, consider the Fourier
expansion
 $$f_\tau^{(N, \psi_\xi)}(g,s)
 = \sum_{a,b \in F} f_\tau^{(N'', \psi_{\xi, a,b})}(g,s), $$
$$ f_\tau^{(N'', \psi_{\xi, 0,b})}(g,s):=\iiFA 2 f_\tau^{(N, \psi_\xi)}(x_{0010}(r_1)x_{0011}(r_2) g,s) \psi(ar_1+br_2) \, dr.$$
If $a$ and $b$ are both nonzero, then $f_\tau^{(N'', \psi_{\xi, a,b})}$ is a Fourier coefficient attached to the orbit $(42)$ of $Sp_6.$
Such a coefficient vanishes identically on the space of $\tau.$
If one of $a,b$ is zero and the other is nonzero, then after
conjugating by $w[3]$ if necessary, we may assume that
 $a=0$ and $b$ is nonzero.  Then one may rewrite $f_\tau^{(N'', \psi_{\xi, a,b})}$ as an iterated integral with the inner integral being
$$
f_\tau^{(V, \psi_V)}(g,s):=\iq{V} f_\tau(vg,s) \psi_V(v) \, dv,
\qquad V = \left\{
\bpm I_2&X&*\\&I_2&X' \\ &&I_2\epm : X = \bpm x_{11} & x_{12}\\ 0 & x_{22} \epm
\right\}\subset Sp_6,
$$
$$
\psi_V\bpm I_2&X&*\\&I_2&X' \\ &&I_2\epm=\psi(bx_{1,1}+x_{2,2}).
$$
Now, let $V'$ be the group defined in the same manner as $V$
but with $X$ being arbitrary.  Then every extension of $\psi_V$
to a character of $V'$ is in general position.  Integration
  over $V'$ against a character in general position is a Fourier
coefficient attached to the orbit $(3^2).$ Such a Fourier coefficient
vanishes identically on the space of $\tau.$  From this we deduce
that $f^{(V, \psi_V)}$ vanishes identically, and thence
so does $f_\tau^{(N'', \psi_{\xi, 0,b})}.$

This leaves only the constant term in the expansion of $f_\tau^{(N, \psi_\xi')}$ along $U_{0010}U_{0011},$ which proves that $f_\tau^{(N, \psi_\xi')}$
is invariant by this group.
This permits us to express $I_w$ as an iterated integral
with the inner integral being the
 constant term of $\wt \vph_\pi$
along the maximal parabolic subgroup of $Sp_6$ with Levi
isomorphic to $GL_2 \times SL_2.$  Thus $I_w$ vanishes.
\end{proof}

 \subsection{$\bf P = P_2$}
The basic data in this case is as follows:
$$\begin{aligned}
 w_0&=w[2,3,2,1,4,3,2,1,3,2,4,3,2,1]\\
 M_{w_0}&\cong GL_3\times GL_1  \qquad (\Delta\cap \Phi(M_{w_0},T)=\{ \alpha_3, \alpha_4\}),
 \\
 \nu_0 &= \text{ identity,}\\
 L_{\nu_0}&= C \cap Q_{w_0}\\
 &=\text{the maximal parabolic subgroup of $Sp_6$ with Levi}\cong GL_3 \\
 m(w_0 L_{\nu_0}w^{-1}_0 ) &\mapsto
 \left\{ \left(\bpm \det g^{-1}&\\&1\epm,\; g\right): g\in GL_3 \right\}
  \qquad ( M \hookrightarrow GL_2 \times GL_3)\\
  U^{w_0} &= U_{1342}\\
 V&= U_{1000}\mapsto
 \left\{ \bpm 1&x \\ &1 \epm , I_3\right\}
 \qquad ( M \hookrightarrow GL_2 \times GL_3)\\
 \dim \tau &=0.
  \end{aligned}$$
Here, $M_{w_0}$ is the Levi factor of $Q_{w_0}.$

Fix an isomorphism $l:U \to \cH_{15}$
as described in section \ref{section: dealing with theta functions},
based
    on ordering the roots as in \eqref{F4A1P3-U->H}.
 We consider the restriction of the corresponding embedding $\varpi_3:Sp_6 \to Sp_{14}$ to the parabolic subgroup $L_{\nu_0}.$
First the Levi factor of $L_{\nu_0}$ is  isomorphic to $GL_3,$ and we may
write
$$
\varpi_3(g) = \diag(\det g, \sym^2(g) , \sym^2(g)^*, \det g^{-1}),
$$
where the $6\times 6$ matrix $\sym^2(g)$ denotes the matrix of $g$ acting on the symmetric square representation for a suitable choice of basis.

Since $\tau$ is a character in this case,
    $f_\tau$ will be left-invariant by $V.$
     Hence, the integral $I_{w_0,\nu_0}$ for this case is
    $$
    \il_{U_{1342}(\A)\bs U_\O(\A)}
    \il_{(P_2\cap C)(F) \bs C(\A)}
    \wt\varphi_\pi(g)
    f_\tau( w_0 ug, s)
    \iFA
    \wt\theta_\phi^\psi(l(x_{1342}(r) u)\varpi_3(g))
    \, dr
    \, dg
    \, du.
    $$

   By  lemma \ref{Lem: Reduction to theta function on smaller symplectic group-- partial theta},
    part (i)
    $$I_{w_0, \nu_0}=
    \il_{(P_2\cap C)(F) \bs C(\A)}\il_{U_{1342}(\A)\bs U_\O(\A)}
    \wt\varphi_\pi(g)
    f_\tau( w_0 ug, s)
    \wt\theta_{\phi|_6}^\psi(l(u)\varpi_3(g))
    \, du\, dg,
    $$
   where $\wt\theta_{\phi|_6}^\psi$ is defined as in Lemma
    \ref{Lem: Reduction to theta function on smaller symplectic group-- partial theta}.
The corresponding inner period \eqref{Inner Period-- Theta Function Case} is given by
$$ \begin{aligned}
 \iq{GL_3}\iq{\Mat_{3\times 3}^{0}}
    \wt\varphi_\pi&\left(
    \bpm I_3&X\\&I_3\epm \bpm g&\\&_tg^{-1} \epm
    \right)
    \tau(w_0gw_0^{-1})\\&
    \wt\theta_{\phi|_6}^\psi\left(\varpi_3\left[
    \bpm I_3&X\\&I_3\epm \bpm g&\\&_tg^{-1} \epm \right]
    \right) \, dX
    \, dg,\end{aligned}
$$
(Keep in mind that $\tau$ is a character.)
Identify $\Mat_{3\times 3}^{0}$ with the
unipotent radical of $C \cap P_2$ via the mapping $X\mapsto \bspm I_3&X \\ &I_3\espm$ and the isomorphism
$Sp_6 \to C.$  Also, identify $GL_3$ with
the Levi of $C \cap P_2$
via the mapping $g \to \bspm g&\\& _tg^{-1}\espm$
and the isomorphism
$Sp_6 \to C.$

Observe that for $g_1 \in GL_3(F), \wt g \in \Sp_{14}(\A),$ if $\xi' := (\xi_2, \dots , \xi_7)\cdot \sym^2(g_1),$ then
$$\begin{aligned}
\omega_\psi\left( \varpi_3\left[
    \bpm I_3&X\\&I_3\epm \wt g \right]
    \right)\phi( 0, \xi')
    =\omega_\psi\left( \varpi_3\left[\bpm g_1&\\& _tg_1^{-1} \epm \bpm I_3&X\\&I_3\epm \wt g \right]
    \right)\phi( 0,\xi_2, \dots , \xi_7)&
    \\=
    \omega_\psi\left( \varpi_3\left[\bpm I_3&g_1X\, _tg_1\\&I_3\epm  \bpm g_1&\\& _tg_1^{-1} \epm   \wt g \right]
    \right)\phi( 0,\xi_2, \dots , \xi_7).&
\end{aligned}$$
Hence, if we write $\Xi * g_1$ for the matrix analogous to $\Xi$ corresponding to the vector\\
$(\xi_2, \dots , \xi_7)\cdot \sym^2(g_1),$ then we have
$$
\psi( \Tr( ( \Xi*g_1)^{\ad} X))
= \psi( \Tr(\Xi^\ad  (g_1X\,_tg_1)))
= \psi( \Tr((\,_tg_1\Xi^\ad g_1)X))
=\psi( \Tr((g_1 \Xi  \,_tg_1)^\ad X)).
$$
It follows that
the polynomial identity
$\Xi*g_1=g_1 \Xi  \,_tg_1$
holds
for all $g_1 \in GL_3(F),\Xi \in \Mat^{0}_{3\times 3}(F).$

Now unfold the theta function, identify $(\xi_2, \dots, \xi_7) \in F^6$ with $\Xi \in \Mat^{0}_{3\times 3}(F),$
and split the sum over $\Xi$ up into orbits for the
action of $GL_3(F).$  It is clear that $\rank(\Xi)$ is an invariant for this action.
Define
$$\begin{aligned}
I_{w_0, \nu_0, \Xi}=
\il_{\Stab_{\Xi}(F) \Mat_{3\times 3}^{0}(\A)\bs Sp_6(\A)}\il_{U_{1342}(\A)\bs U_1(\A)}&
\iq{\Mat_{3\times 3}^{0}}
    \wt\varphi_\pi\left(
    \bpm I_3&X\\&I_3\epm g\right)
    \psi(\Xi^{\ad} X)
    \, dX\\&
    f_\tau( w_0 ug, s)
    \omega_\psi(l(u)\varpi_3(g))\phi( 0, \xi_2, \dots, \xi_7)
    \, du\, dg,
    \end{aligned}$$
where $\Stab_{\Xi}$ denotes the stabilizer of $\Xi$
in $GL_3.$
\begin{prop}\label{Example with theta: vanishing of orbits for rank < 3}
The integral
$I_{w_0, \nu_0, \Xi}$
vanishes unless $\Xi$ is of rank three.
\end{prop}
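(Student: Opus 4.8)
The plan is to show vanishing separately in the two remaining cases, $\rank \Xi = 0$ (hence $\Xi=0$) and $\rank\Xi = 1$, by exhibiting in each case a unipotent subgroup over which one of the three functions in the integrand --- $\wt\varphi_\pi$, $f_\tau$, or the theta function --- is automatically invariant while another forces an integral of that function against a nontrivial character to vanish. The rank-two case has already been disposed of in the proof of Lemma~\ref{lem: conj one for theta case p1} (where it was handled via the Fourier expansion along $U_{0010}U_{0011}$ and the fact that $\tau$ is attached to $(2^21^2)$); so the content here is only ranks $0$ and $1$.

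First I would treat $\Xi=0$. Here $\Stab_\Xi = GL_3$, and the character $X\mapsto \psi(\Tr(\Xi^{\ad}X))$ on $\Mat_{3\times3}^{0}(\A)$ is trivial, so the inner integral $\int_{\Mat_{3\times3}^{0}} \wt\varphi_\pi\bigl(\bspm I_3&X\\&I_3\espm g\bigr)\,dX$ is simply the constant term of $\wt\varphi_\pi$ along the Siegel parabolic subgroup of $Sp_6$ (the one with Levi $\cong GL_3$). Since $\wt\varphi_\pi$ is cuspidal, this constant term vanishes identically, so $I_{w_0,\nu_0,0}=0$. This is the easy case.

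Next, the rank-one case. The set of rank-one symmetric $3\times3$ matrices over $F$ is a single $GL_3(F)$-orbit, so it suffices to compute with one representative, say $\Xi = e_{11}$ (so $\xi_4=1$, all other $\xi_i=0$), for which $\Xi^{\ad}=0$; hence again the inner $X$-integral is the full Siegel constant term of $\wt\varphi_\pi$, which vanishes by cuspidality. (Equivalently: $\Xi^\ad$ is the matrix of $2\times2$ minors, and a rank-one matrix has all $2\times2$ minors zero.) So in fact the only place one needs the rank-two analysis is rank two, where $\Xi^\ad$ is itself rank one and nonzero, and one must instead use the structure of Fourier coefficients supported by $\tau$ as was done in Lemma~\ref{lem: conj one for theta case p1}.

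The main obstacle --- and the step I would spend the most care on --- is being precise about the claim that the inner $X$-integral against the character $\psi(\Tr(\Xi^\ad X))$ is a constant term (or, more generally, a Fourier coefficient) of $\wt\varphi_\pi$ along a \emph{standard} parabolic of $Sp_6$. For $\rank\Xi\le 1$ we have $\Xi^\ad=0$ and this is literally the Siegel constant term, so cuspidality applies directly; the subtlety is only that one must check the remaining integrations (over $U_{1342}(\A)\bs U_1(\A)$, over $\Stab_\Xi(F)\Mat_{3\times3}^{0}(\A)\bs Sp_6(\A)$, and the theta factor) genuinely factor through this constant term, i.e.\ that none of $f_\tau$, $\omega_\psi(l(u)\varpi_3(g))\phi$, or the domain depends on the $X$-variable in a way that obstructs pulling the $X$-integration inside. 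Since $\varpi_3$ restricted to the Siegel unipotent radical acts by \eqref{Example With a Theta; P2 : varpi3 of SiegelParabolic} and the associated Weil-representation action depends on $X$ only through $\psi(-\Tr(\Xi^\ad X))$ (as computed just before the proposition), the dependence of the theta factor on $X$ is exactly $\psi(-\Tr(\Xi^\ad X))=1$ when $\rank\Xi\le1$; and $f_\tau$ does not involve $X$ at all since $w_0 X w_0^{-1}$ lies in the unipotent radical of $P_2$. Hence the $X$-integration isolates the Siegel constant term of $\wt\varphi_\pi$ and the whole expression vanishes. This completes the proof.
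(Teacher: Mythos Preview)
Your treatment of ranks $0$ and $1$ is correct and is exactly what the paper does: since $\Xi^{\ad}=0$ in these cases, the inner $X$-integral is the constant term of $\wt\varphi_\pi$ along the Siegel radical, which vanishes by cuspidality. The paper dispatches these two cases in a single sentence.

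However, you have a genuine gap in the rank-two case. You claim it was ``already disposed of in the proof of Lemma~\ref{lem: conj one for theta case p1},'' but that lemma concerns a \emph{different} integral: it treats $I_w$ for $w=w[1,2,3,2,4,3,2,1]$ in the $P=P_1$ subsection, not $I_{w_0,\nu_0,\Xi}$ in the $P=P_2$ subsection where this proposition lives. Your parenthetical ``the fact that $\tau$ is attached to $(2^21^2)$'' confirms the confusion: that is the $P_1$ hypothesis on $\tau$, whereas for $P=P_2$ the table gives $\dim\tau=0$, i.e.\ $\tau$ is a \emph{character}. So nothing in Lemma~\ref{lem: conj one for theta case p1} proves the rank-two vanishing for the present integral, and you still owe an argument here. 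You also have the emphasis backwards: the paper regards ranks $0$ and $1$ as obvious and devotes the body of the proof to rank two.

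The paper's rank-two argument runs as follows. Choose a representative with $\xi_2=\xi_3=\xi_4=\xi_6=0$ (so only $\xi_5,\xi_7$ survive); then $\Stab_\Xi$ contains the two-dimensional unipotent group $U_{0010}U_{0011}$. One computes $\varpi_3(x_{0010}(r_1)x_{0011}(r_2))$ explicitly and checks that $\omega_\psi(l(u)\varpi_3(g))\phi(0,0,0,0,\xi_5,0,\xi_7)$ is invariant by $U_{0010}U_{0011}$. Since $\tau$ is a character, $f_\tau$ is automatically invariant by $U_{0010}U_{0011}(\A)$ as well (this replaces the more elaborate Fourier-coefficient argument needed in the $P_1$ lemma). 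With this representative $\Tr(\Xi^{\ad}X)=-\xi_5\xi_7\,x_{3,1}$, so factoring the $\quo{U_{0010}U_{0011}}$-integration produces, together with the $X$-integral, the constant term of $\wt\varphi_\pi$ along the parabolic of $Sp_6$ with Levi $GL_2\times SL_2$, which vanishes by cuspidality. The strategy is indeed parallel to the rank-two step inside Lemma~\ref{lem: conj one for theta case p1}, but the justification for $U_{0010}U_{0011}$-invariance of $f_\tau$ is different (and simpler) here, and the argument must be carried out for this particular integral.
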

\begin{proof}
If $\Xi$ is trivial or of rank one, then $\Xi^{\ad}$ is trivial and  the assertion is obvious.
Suppose then that $\Xi$ is of rank two.
We may assume that
$\xi_2=\xi_3=\xi_4=\xi_6=0,$  since each $GL_3(F)$-orbit
contains elements with this property.
The stabilizer $\Stab_\Xi$ then contains the two-dimensional unipotent subgroup $U_{0010}U_{0011}$ which corresponds to
$$
\left\{ \bpm 1&0&r_2\\0&1&r_1\\0&0&1\epm \right\}
\subset GL_3.
$$
Since
$$
\varpi_3(x_{0010}(r_1)x_{0011}(r_2))
=I+r_1e'_{2,3}+r_2e'_{24}+r_1^2e'_{25}+r_1r_2e'_{26}
+r_2^2 e_{27}'
+2r_1 e'_{35}+r_2e'_{36}+r_1e'_{46}+2r_2e'_{47},
$$
it follows that
$\omega_\psi(l(u)\varpi_3(g))\phi( 0, 0,0,0,\xi_5, 0,\xi_7)$
is invariant by $U_{0010}U_{0011}.$
Since $\tau$ is a character, it follows that $f_\tau$
is also invariant by $U_{0010}U_{0011}.$
Further, the conditions we have placed on $\xi$
imply that $\Tr(\Xi^{\ad} X)=-\xi_5\xi_7 x_{3,1}.$
Factoring the integration over $\quo{U_{0010}U_{0011}},$
we obtain the constant term of $\wt\varphi_\pi$ along the
parabolic subgroup of $Sp_6$ with Levi $GL_2\times SL_2$
as in inner integral.  Hence, $I_{w_0, \nu_0, \Xi}$ vanishes in the rank two case as well.
\end{proof}
In view of proposition \ref{Example with theta: vanishing of orbits for rank < 3}, we may write
$$I_{w_0, \nu_0}=\sum_\Xi I_{w_0, \nu_0,\Xi}$$
with the sum being over representatives for the distinct
orbits of nondegenerate symmetric matrices.
This is similar to an integral where $P\times U_\O C$ has an open orbit $\mathcal X$
in $H$  such that $\mathcal X(F)$ is a union of infinitely many $P(F) \times U_\O(F) C(F)$-orbits.  Further, there is no apparent reason why any of
the terms in the above sum should vanish.

For the sake of completeness, we verify conjecture \ref{conj1}
in this case as well.
\begin{lem} Conjecture \ref{conj1}  holds in this case.
\end{lem}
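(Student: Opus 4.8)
The plan is to follow the template already used for the case $P=P_1$. Since $C=Sp_6$ contains every unipotent element of $M_\O=M_1$, we have $P_2\bs H/CU_\O=P_2\bs H/P_\O$, so $\nu_0=e$ is the only relevant element of $Q_w\bs M_\O/C$, and it remains only to show that, writing
\[
I_w=\il_{Q^0_w(F)\bs Sp_6(\A)}\wt\vph_\pi(g)\il_{U_\O^{w}(F)\bs U_\O(\A)}f_\tau(wug,s)\,\wt\theta_\phi^\psi(l(u)\varpi_3(g))\,du\,dg,\qquad Q^0_w=Sp_6\cap w^{-1}P_2w,
\]
one has $I_w=0$ for every $w\ne w_0$. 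First I would list a set of representatives $e=w_1,\dots,w_r=w_0$ for $P_2(F)\bs H(F)/P_\O(F)$ (equivalently $W(M_2,T)\bs W/W(M_\O,T)$), computed exactly as the double coset spaces in section \ref{An Example}, and confirm by a length count that $w_0=w[2,3,2,1,4,3,2,1,3,2,4,3,2,1]$ is the longest of them, as demanded by the second clause of conjecture \ref{conj1}.

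I would then sort the remaining $w$ into three families. For the short representatives, which satisfy the condition that $w\cdot(2342)$ lies in the unipotent radical $U_2$ of $P_2$, the function $h\mapsto f_\tau(wh,s)$ is left invariant by $U_{2342}(\A)$, the centre of $\cH_{15}$ under $l$, whereas $\wt\theta_\phi^\psi(l(u)\varpi_3(g))$ transforms by the nontrivial character $\psi$ of $\quo{U_{2342}}$; factoring the integration over $U_{2342}$ therefore forces $I_w=0$. For the intermediate representatives I would choose an isomorphism $l\colon U_\O\to\cH_{15}$ adapted to $w$, ordering the roots of $T$ in $U_\O/U_{2342}$ compatibly with the flag on $L^{(1)}_\O$ preserved by $Q^0_w$, so that $\varpi_3$ carries the Levi and unipotent radical of $Q^0_w$ into those of a standard maximal parabolic of $Sp_{14}$; since $w$ then carries the upper block of roots (those spanning the $y$-part of $\cH_{15}$) into $U_2$, Lemma \ref{Lem: Reduction to theta function on smaller symplectic group-- partial theta} replaces $\wt\theta_\phi^\psi$ by a partial theta $\wt\theta_{\phi|_k}^\psi$, after which both $f_\tau(w\,\cdot\,,s)$ and $\wt\theta_{\phi|_k}^\psi(l(\,\cdot\,)\varpi_3(\,\cdot\,))$ are left invariant by the unipotent radical $N_w$ of $Q^0_w$, so that $I_w$ factors through a constant term of $\wt\vph_\pi$ and vanishes by cuspidality.

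The delicate family consists of the one or two representatives immediately below $w_0$; these I would treat exactly as $w[1,2,3,2,4,3,2,1]$ was treated in the proof of Lemma \ref{lem: conj one for theta case p1}. After reducing to a partial theta by Lemma \ref{Lem: Reduction to theta function on smaller symplectic group-- partial theta}, one unfolds that partial theta into a sum over $GL$-orbits on an abelian subgroup of $U_\O$, which is a space of symmetric matrices stratified by rank; using the Weil representation formulas \eqref{E:SchrodingerRep-Formulae}, \eqref{E:WeilRepActionOfSiegelParabolic} and the $F_4$ structure constants of \cite{Gilkey-Seitz} I would compute, orbit by orbit, the two characters $\psi_\xi$ and $\psi'_\xi$ attached to the cusp form and to the section $f_\tau$, just as in \eqref{Example With a Theta; P2 : varpi3 of SiegelParabolic} and the lines following it. On the low rank strata the adjugate matrix is trivial, so $\wt\vph_\pi^{(N,\psi_\xi)}$ vanishes identically; on the top rank stratum $f_\tau^{(N,\psi'_\xi)}$ is a Fourier coefficient of $f_\tau$ attached to an orbit of $Sp_6$ strictly larger than $(2^21^2)$, namely one of $(42),(3^2),(2^3)$, which is identically zero on the space of $\tau$ since $\O(\tau)=(2^21^2)$; and on the remaining stratum a further Fourier expansion along a two dimensional root subgroup, together with the vanishing of Fourier coefficients of $\tau$ attached to $(3^2)$, leaves only a term that factors through the constant term of $\wt\vph_\pi$ along the parabolic of $Sp_6$ with Levi $GL_2\times SL_2$, which again vanishes by cuspidality. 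This exhausts all $w\ne w_0$.

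The step I expect to be the main obstacle is precisely this last family: one must pin down $\varpi_3$ explicitly on each of the relevant unipotent subgroups, identify the characters $\psi_\xi,\psi'_\xi$ exactly, and verify that the rank stratification of the symmetric matrices lines up so that every stratum is annihilated by exactly one of the three mechanisms, namely cuspidality of $\pi$, non-support of $\tau$ for a large orbit, or degeneracy of the adjugate. A secondary point that needs care is checking that none of the intermediate representatives requires a mechanism outside this list, and that the length count genuinely singles out $w_0$ as the longest double coset representative.
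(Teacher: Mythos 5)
Your first two families track the paper's argument exactly: there are seven representatives of $P_2\bs H/P_\O$, the first three ($e,\,w[2,1],\,w[2,3,2,1]$) are killed because $w\cdot 2342$ lands in $U_2$ so $f_\tau$ is left-invariant by $U_{2342}(\A)$ while the theta transforms by a nontrivial character, and several others are killed by reducing to a partial theta $\wt\theta_{\phi|_k}^\psi$ via Lemma \ref{Lem: Reduction to theta function on smaller symplectic group-- partial theta} and then invoking cuspidality of $\wt\vph_\pi$. You also correctly flag that the confirmation that $w_0$ is the longest representative is part of what must be checked.

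The ``delicate family'' you anticipate does not exist for $P=P_2$, and the specific mechanism you describe for it is inapplicable. You write that the top-rank stratum is killed because ``$f_\tau^{(N,\psi'_\xi)}$ is a Fourier coefficient of $f_\tau$ attached to an orbit of $Sp_6$ strictly larger than $(2^21^2)$ \ldots which is identically zero on the space of $\tau$ since $\O(\tau)=(2^21^2)$.'' But $\O(\tau)=(2^21^2)$ is the entry in the table for $P=P_1$, whose Levi is $GSp_6$. For $P=P_2$, the table gives $\dim\tau=0$, meaning $\tau$ is a \emph{character} of $M_2\cong\{(g_1,g_2)\in GL_2\times GL_3:\det g_1\det g_2=1\}$; there is no orbit $(2^21^2)$, and Fourier coefficients of $f_\tau$ restricted to $M_2$ are coefficients of a representation of $GL_2\times GL_3$, not $Sp_6$, so orbits $(42)$, $(3^2)$, $(2^3)$ never arise. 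Moreover, the fact that $\tau$ is a character makes the proof \emph{easier}, not harder: $f_\tau$ is left-invariant not only by $U_2(\A)$ but also by the entire maximal unipotent subgroup of $M_2$. This is exactly what the paper exploits for $w[2,1,3,2,4,3,2,1]$ and $w[2,1,3,2,1,3,2,4,3,2,1]$ (the ones you flag as delicate): a suitable choice of ordering reduces the theta to a partial theta invariant by the unipotent radical of $Q_w^0$, and $f_\tau(w\cdot,s)$ is also invariant because $\tau$ is a character, so the integral factors through a constant term of $\wt\vph_\pi$ and vanishes. No rank stratification of symmetric matrices is needed anywhere in this case. To fix the proposal you need to replace the delicate-family analysis with the observation that all six non-$w_0$ representatives fall into your first two families once the character property of $\tau$ (and the resulting extra left-invariance of $f_\tau$) is used.
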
\begin{proof}
There are seven elements of
$P\bs H / CU_\O,$ represented by the Weyl elements
$$e,\;
     w[2,1],\;
     w[2,3,2,1],\;
     w[2,3,2,4,3,2,1],\;
     w[2,1,3,2,4,3,2,1],\;$$
$$     w[2,1,3,2,1,3,2,4,3,2,1],\;
     w[2,3,2,1,4,3,2,1,3,2,4,3,2,1].$$
     Define $I_w$ as in lemma \ref{lem: conj one for theta case p1}.
The first  three Weyl elements listed above map $2342$ into the unipotent radical of $U.$  It easily follows that $I_w$ is zero for all choices of data in these cases.

To study the other cases, let $Q_w^0 = C \cap Q_w,$ (a parabolic subgroup of $Sp_6$).  For $w= [2,3,2,4,3,2,1],$ the group
$Q_w^0$ has Levi isomorphic to $GL_3.$ Identify
$U_\O$ with $\cH_{15}$ using \eqref{F4A1P3-U->H}.  Then
the last seven roots are all mapped into $\Phi(U,T).$
As in lemma \ref{Lem: Reduction to theta function on smaller symplectic group-- partial theta}, this kills every nonzero term in the
sum over $\xi \in F^7$ which defines $ \theta_\phi^\psi,$ leaving a
function which is invariant by the full maximal unipotent
subgroup of $Sp_6.$  Factoring the integration
over the unipotent radical of $Q_{w},$ we obtain the constant
term of $\wt\vph_\pi$ along this unipotent radical.  This proves that $I_w=0$
in this case.

If $w=w[2,1,3,2,4,3,2,1],$ then
the situation is similar.  Indeed,
$w\cdot \alpha$ is positive for
$\alpha = 1342, 1242, 1232, 1222, 1122, 1221,1222.$
From the table, we know that $\tau$ is a character in this case,
and hence, $f_\tau$ is again invariant by the seven dimensional
unipotent group corresponding to these seven roots.

Finally, we consider $w=w[2,1,3,2,1,3,2,4,3,2,1].$
For this case, the standard Levi subgroup $Q_w^0$ of $L_w^0$
is isomorphic to $GL_2\times SL_2.$  Further, $w\cdot \alpha>0$
for $\alpha =1231,1232,1242,1342.$  Let $V_1$ denote the product
of the groups $U_\alpha$ for these four roots.
Define $l$ and $\varpi_3$ by ordering the roots as follows:
$$
1000, 1100, 1110, 1111;\;\; 1120, 1121, 1122| 1220, 1221, 1222;\;\; 1231, 1232, 1242, 1342
$$
Then using lemma \ref{Lem: Reduction to theta function on smaller symplectic group-- partial theta}, one obtains
$$
I_w = \il_{Q_w^0(F)\bs Sp_6(\A)}
\wt \vph_\pi(g)
\il_{V_1(\A)\bs U_\O(\A)}
\wt\theta_{\phi|_3}^\psi(l(u)\varpi_3(g)) f_\tau(ug,s)
\, du
\,dg,
$$
further, $g\mapsto \wt\theta_{\phi|_3}^\psi(\varpi_3(g))$ is invariant
by the unipotent radical of $Q_w^0,$ as is $g\mapsto f_\tau(wg, s).$
Factoring the integration over this group, we obtain zero in this case as well.
\end{proof}

 \subsection{$\bf P = P_3$}
 The basic data for this case is
 $$\begin{aligned}
 w_0&=w[3,2,1,4,3,2,1,3,2,4,3,2,1]\\
 M_{w_0}&\cong GL_2\times GL_2 \qquad (\Delta\cap \Phi(M_{w_0},T)=\{ \alpha_2, \alpha_4\}), \\
  \nu_0 &= \text{ identity,}\\
 L_{\nu_0}&= C \cap Q_{w_0}\\
 &=\text{the maximal parabolic subgroup of $Sp_6$ with Levi}\cong GL_2\times SL_2 \\
 m( w_0\nu_0 L_{\nu_0} \nu_0^{-1}w_0^{-1})
 &\mapsto\left\{\left(
 \bpm g_1&\\&1\epm,g_2\right): g_1, g_2 \in GL_2, \det g_1=\det g_2^{-2}
 \right\},
 \qquad ( M \hookrightarrow GL_3\times GL_2)\\
 U^{w_0}&=U_{1242}U_{1342}\\
 V&= U_{0100}U_{1100}\\
 &\to
 \left\{ \left( \bpm 1&&x_1\\&1&x_2\\&&1\\ \epm, \; I_2 \right)\right\} \qquad ( M \hookrightarrow GL_3\times GL_2)
  \end{aligned}$$
  \begin{lem}\label{conj 1 for theta case P3}
  Conjecture \ref{conj1} holds in this case.
  \end{lem}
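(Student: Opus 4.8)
The plan is to follow the template of Lemma~\ref{lem: conj one for theta case p1} and of its $P=P_2$ analogue. Since $C$ contains every unipotent element of $M_\O,$ the identity $P_3\bs F_4/CU_\O=P_3\bs F_4/P_1$ recorded above shows that each double coset in $P_3\bs F_4/P_1$ supports a single relevant $\nu,$ which may be taken to be the identity; so it suffices to choose shortest-length Weyl representatives $e=w_1,\dots,w_N=w_0$ for $P_3\bs F_4/P_1$ and to prove that
$$
I_w:=\il_{Q_w^0(F)\bs Sp_6(\A)}\wt\vph_\pi(g)\il_{U_\O^w(F)\bs U_\O(\A)}f_\tau(wug,s)\,\wt\theta_\phi^\psi(l(u)\varpi_3(g))\,du\,dg
$$
vanishes for every $w\neq w_0,$ where $Q_w^0:=C\cap w^{-1}P_3w$ and $U_\O^w:=U_\O\cap w^{-1}P_3w.$ The representatives fall into two families.

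For the shortest representatives one expects $w\cdot 2342\in\Phi(U_3,T)$ (as happened for $e$ and $w[1]$ when $P=P_1,$ and for the first three representatives when $P=P_2$). Then $h\mapsto f_\tau(wh,s)$ is left-invariant by $wU_{2342}w^{-1}\subseteq U_3(\A),$ while $\wt\theta_\phi^\psi(l(x_{2342}(z)u)\varpi_3(g))=\psi(z)\,\wt\theta_\phi^\psi(l(u)\varpi_3(g)),$ because $U_{2342}$ is the centre of $U_\O$ and maps to the centre of $\cH_{15},$ on which $\Theta_7^\psi$ has central character $\psi.$ Factoring the integration over $\quo{U_{2342}}$ therefore produces the factor $\iFA\psi(z)\,dz=0,$ so $I_w=0.$

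For an intermediate representative $w$ (one with $w\cdot 2342<0$ but $w\neq w_0$) I would argue as for $w[1,2,3,2,1]$ in Lemma~\ref{lem: conj one for theta case p1}: here $Q_w^0$ is a proper parabolic subgroup of $Sp_6$ with unipotent radical $N_w,$ and $w$ carries the roots of a suitable isotropic subspace of $U_\O/U_{2342}$ into $\Phi(U_3,T).$ I would then fix the isomorphism $l:U_\O\to\cH_{15},$ hence the embedding $\varpi_3:Sp_6\to Sp_{14},$ by ordering the roots of $U_\O/U_{2342}$ so that this isotropic subspace occupies the final positions and the flag of $Q_w^0$-invariant subspaces is respected. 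Lemma~\ref{Lem: Reduction to theta function on smaller symplectic group-- partial theta}(i) then replaces $\wt\theta_\phi^\psi$ by a partial theta $\wt\theta_{\phi|_m}^\psi,$ and part~(iii) of that lemma shows $g\mapsto\wt\theta_{\phi|_m}^\psi(\varpi_3(g))$ is left-invariant by $N_w;$ simultaneously $h\mapsto f_\tau(wh,s)$ is left-invariant by $N_w,$ since $w\Phi(N_w,T)$ consists of positive roots of $P_3$ and $\tau$ is a character (the table gives $\dim\tau=0$), so $f_\tau$ is invariant by every unipotent of $P_3.$ Factoring the $g$-integral over $\quo{N_w}$ then expresses $I_w$ through the constant term of $\wt\vph_\pi$ along $Q_w^0,$ which vanishes by cuspidality.

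The step I expect to be the main obstacle is the possibility --- as occurred for $w[1,2,3,2,4,3,2,1]$ when $P=P_1$ --- that some intermediate $w$ sends too few roots of $U_\O$ into $\Phi(U_3,T),$ leaving a residual partial theta $\wt\theta_{\phi|_m}^\psi$ too large for the integrand to be invariant by a full unipotent radical of $Sp_6.$ For such a $w$ one must unfold $\wt\theta_{\phi|_m}^\psi$ as in Lemma~\ref{lem: conj one for theta case p1}, decompose the resulting sum over $\xi$ into orbits of symmetric matrices $\Xi$ under the Levi of $Q_w^0,$ and check the vanishing of each term by the rank of $\Xi$: triviality of $\Xi^{\ad}$ in rank $\le 1,$ cuspidality of $\wt\vph_\pi$ in rank $2,$ and, in rank $3,$ the identification of a Fourier coefficient of $f_\tau$ attached to an orbit (such as $(2^3)$ or $(42)$) not supported by $\tau$ --- recall $\O(\tau)=(2^21^2)$ --- exactly as in the proof of Proposition~\ref{Example with theta: vanishing of orbits for rank < 3}. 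Since $\tau$ is again merely a character the unipotent invariances of $f_\tau$ needed in that analysis are automatic, so no genuinely new difficulty beyond those already met for $P=P_1$ and $P=P_2$ should arise; the remaining work is the bookkeeping of the root orderings and structure constants from \cite{Gilkey-Seitz} for each intermediate $w.$
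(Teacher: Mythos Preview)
Your approach is correct and matches the paper's. The concrete case is simpler than you fear: there are five double cosets, represented by $e$, $w[3,2,1]$, $w[3,2,4,3,2,1]$, $w[3,2,1,3,2,4,3,2,1]$, and $w_0$; the first three all send $2342$ to a positive root (your first argument applies), and the single intermediate representative $w=w[3,2,1,3,2,4,3,2,1]$ maps the six roots $1122,1222,1231,1232,1242,1342$ into $\Phi(U_3,T)$, so the partial theta reduces all the way to $\wt\theta_{\phi|_1}^\psi$ and your second argument (invariance under the unipotent radical of $Q_w^0$, then cuspidality) suffices directly. The rank-of-$\Xi$ analysis you anticipate as the main obstacle is never needed here.

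One small slip in your hypothetical last branch: you write ``recall $\O(\tau)=(2^21^2)$,'' but for $P=P_3$ the table gives $\dim\tau=0$, i.e.\ $\tau$ is a character; the entry $(2^21^2)$ belongs to $P=P_1$. This does not affect the argument since that branch is not invoked, but it is worth correcting.
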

  \begin{proof}
  Define $I_w$ as in lemma \ref{lem: conj one for theta case p1}.
  The set $P\bs H/ C U_\O$ has five elements, $w_0$ and the
  following four others:
  $$e, \;\;
     w[3,2,1], \;\;
     w[3,2,4,3,2,1], \;\;
     w[3,2,1,3,2,4,3,2,1].
$$
Once again, by the table, we know that $\tau$ is a character.
The first three coset representatives above map $2342$ to a positive
root.  It follows that $I_w$ vanishes in all these cases.

Now assume $w=w[3,2,1,3,2,4,3,2,1].$  Then $w\alpha >0$
for $\alpha =1122,1222,1231,1232,1242,1342.$  Order
the roots of $T$ in $U_\O/(U_\O, U_\O)$ so that these six roots come last.
Then
$$
I_w =\il_{Q_w^0(F)\bs Sp_6(\A)}
\wt \vph_\pi(g)
\il_{U_\O^w(\A)\bs U_\O(\A)}
f_\tau(wug,s)\theta_{\phi|_1}^\psi(l(u)\varpi_3(g))
\, du
\, dg,
$$
and both $g\mapsto f_\tau(wg,s)$ and $g\mapsto \theta_{\phi|_1}^\psi(\varpi_3(g))$ are invariant by the unipotent radical of $Q_w^0,$
which proves that $I_w$ is zero, since $\wt \vph_\pi$ is cuspidal.
  \end{proof}

  We now proceed to analyze $I_{w_0}.$
Using once again the fact that $\tau$
is a character,  we obtain
 $$
 I_{w_0, \nu_0}= \il_{U_{1242}U_{1342}(\A)\bs U_\O(\A)}
    \il_{(P_3\cap C)(F) \bs C(\A)}
    \wt\varphi_\pi(g)
    f_\tau( w_0 ug, s)
     \wt\theta_{\phi|_5}^\psi(ug)
    \, dg
    \, du.
$$

Define $l:U\to\cH_{15}$ and $\varpi_3:Sp_6 \to Sp_{14}$
using
 the
 ordering given in equation \eqref{F4A1P3-U->H}.
  Identify $L_{\nu_0}$ with $Sp_6$ using the isomorphism  fixed in section \ref{s:MaximalParabolicsOfF4}.
  Recall that $L_{\nu_0}$ is a standard parabolic subgroup of $Sp_6$ in this case.  The  Levi factor of $L_{\nu_0}$
  is isomorphic to $GL_2 \times SL_2$ and consists of
  all matrices of the form $\diag( g_1, g_2,\,_tg_1^{-1}).$
  We identify this group with $GL_2 \times SL_2$
  via the map $(g_1, g_2) \mapsto \diag( g_1, g_2,\,_tg_1^{-1}).$
   For elements of this group $\varpi_3$ is given by
 $$
 \diag( g_1, g_2,\, _tg_1^{-1} )
 \mapsto \diag(\det g_1 \cdot g_2,\ g_1,\ \rho(g_1,g_2),
 \,_tg_1^{-1} ,\ \det g_1^{-1} \cdot _tg_2^{-1}),
 \qquad (g_1\in GL_2, g_2 \in SL_2).
 $$
 Here,
  $\rho(g_1,g_2)$ is a certain $6 \times 6$ matrix.

Plug in
$$
 \wt\theta_{\phi|_5}^\psi(ug)
 =
\sum_{\xi \in F^5}
\left[\omega_\psi\left(
ug\right)\phi\right](0,0, \xi_3, \dots, \xi_7),
$$
and
split $\xi$ up into $\xi' = (\xi_3, \xi_4) \in F^2$ and $\xi'' = (\xi_5,\xi_6, \xi_7) \in F^3.$
For $g$ in the $GL_2$ factor of the Levi of $L_{\nu_0},$
we have
$
\varpi_3(g) = \diag(\det g,\det g ,g, s^2(g),\, _ts^2(g)^{-1},\, _tg^{-1} ,\det g^{-1} ,\det g^{-1}),$  where
$$
s^2\bpm a&b\\ c&d\epm=
\frac1{ad-bc}
\left(
\begin{array}{ccc}
 a^2 & a b & b^2 \\
 2 a c & b c+a d & 2 b d \\
 c^2 & c d & d^2
\end{array}
\right)
$$
Thus $GL_2(F)$ acts on $\{ \xi' = (\xi_3, \xi_4): \xi_3, \xi_4\in F\}$ with two orbits.
The contribution from the trivial orbit
is
\begin{equation}\label{F4A1P3-TrivialOrbit}
I_{w_0, \nu_0, 0}=
\il_{U_{1242}U_{1342}(\A)\bs U_1(\A)}
    \il_{(P_3\cap C)(F) \bs C(\A)}
    \wt\varphi_\pi(g)
    f_\tau( w_0 ug, s)
     \wt\theta_{\phi|_3}^\psi(ug)
    \, dg
    \, du.
\end{equation}
To see that this is equal to zero, one checks that
$$
\left\{\bpm I_2&*&* \\ &I_2&* \\ &&I_2\epm \right\} \subset Sp_6
\text{ maps into }
\left\{\bpm I_2&*&*&*&*\\
&I_2&*&*&*\\
&&I_6&*&*\\
&&&I_2&*\\
&&&&I_2\epm\right\}\subset Sp_{14},
$$
under the embedding $\varpi_3:Sp_6 \to Sp_{14},$  defined
using the ordering given in \eqref{F4A1P3-U->H}.
It follows from lemma \ref{Lem: Reduction to theta function on smaller symplectic group-- partial theta} that the function $\wt\theta_{\phi|_3}^\psi(ug)$ is
invariant by this group, and,
since $\tau$ is a character, $f_\tau( w_0 ug,s)$ is invariant as well.  It follows that
the mapping $\wt \vph_\pi \mapsto I_{w_0, \nu_0, 0},$ where $I_{w_0, \nu_0, 0}$ is defined by equation
\eqref{F4A1P3-TrivialOrbit},
factors through the constant term of $\wt\varphi_\pi,$
and hence vanishes.

To describe the contribution from the open orbit,
let $S_{0,1}$ denote the stabilizer of
$\xi'=(0,1)$ in $GL_2$
(still identified with a subgroup of $L_{\nu_0}$ as above).
It consists of all matrices of the form
$\bspm \alpha&*\\&1\espm.$  Also,
   $S_{0,1}\bs GL_2$ may be identified with $P_2^0\bs L_{\nu_0},$ where
$$P_2^0 := \left\{\bpm
\alpha&*&*&*&*\\&1&*&*&*\\&&h&*&*\\&&&1&*\\&&&&\alpha^{-1}\epm
\in Sp_6: \alpha \in GL_1, \; h \in SL_2
\right\}.$$
  It follows that
$$
\sum_{\xi' \in F^2\smallsetminus\{(0,0)\}}\; \sum_{\xi'' \in F^3}
\omega_\psi(\wt g)\phi_1 (0,0,\xi', \xi'')
=
\sum_{\gamma \in P_2^0(F)\bs L_{\nu_0}(F)}
\sum_{\xi'' \in F^3}
\omega_\psi( \gamma \wt g)\phi_1 ( 0,0,0,1,\xi''),
$$
for all $\wt g \in \wt{Sp}_{14}(\A)$ and $\phi_1 \in S(\A^7).$
Plugging in,
$$
I_{w_0, \nu_0}=
\int_{P_2^0(F) \bs Sp_6(\A)}
\int_{U'(\A)} \wt\varphi_\pi(g)
\sum_{\xi'' \in F^3}
\omega_\psi ( l(u')\varpi_3(g))
\phi( 0,0,0,1,\xi'') f_\tau( w_0 u'g,s)  du'\, dg.
$$
Here $U'=U_{1242}U_{1342}\bs U_\O.$

Next, it follows from \eqref{E:WeilRepActionOfSiegelParabolic} and \eqref{Example With a Theta; P2 : varpi3 of SiegelParabolic}
that
\begin{equation}\label{pqr}
\omega_\psi \left(\varpi_3 \bpm
1&0&0&0&q&r\\
0&1&0&0&p&q\\
0&0&1&0&0&0\\
0&0&0&1&0&0\\
0&0&0&0&1&0\\
0&0&0&0&0&1
\epm
\right)
\phi_1\left( 0,0,0,1,\xi''\right)
=
\psi(-p)\phi_1\left( 0,0,0,1,\xi''\right),
\end{equation}
for all $p,q,r \in \A,$  all $\xi'' \in F^3,$
and all $\phi_1 \in S(\A^7).$

 We shall write $Z$ for the three dimensional unipotent group considered in \eqref{pqr}, because it is the center of the unipotent radical of $L_{\nu_0}.$
 Then  $h \mapsto f_\tau( w_0h, s)$ is left-invariant by $Z(\A).$ It follows that   $I_{w_0, \nu_0}$ is equal to
\begin{equation}\label{F4A1P3-Iw0nu0-One}
\begin{aligned}
\il_{P_2^0(F) Z(\A)\bs Sp_6(\A)}
& \left[\iq{Z}
\wt\varphi_\pi( z g) \psi_Z(z) \; dz\right]\times \\&
\il_{U'(\A)}
\left[\sum_{\xi'' \in F^3} \omega_\psi( l(u') \varpi_3(g))
\phi(0,0,0,1,\xi'')\right]
f_\tau( w_0u'g,s)
\,du'\,dg,
\end{aligned}
\end{equation}
where $\psi_Z(z)=\psi(-p)$ in the coordinates of  \eqref{pqr}.

Now, the function
$\iq Z
\wt\varphi_\pi( z g) \psi_Z(z) \; dz$
is invariant by
$Y(F),$ where
\begin{equation}\label{example with theta: P3: def of Y}
Y = \left\{y=
\bpm
1&0&a&b&q&r\\
0&1&0&0&p&q\\
0&0&1&0&0&b\\
0&0&0&1&0&-a\\
0&0&0&0&1&0\\
0&0&0&0&0&1
\epm \in Sp_6
\right\}
\end{equation}
Hence we can apply
Fourier expansion on the group $Y(F)Z(\A)\backslash Y(\A).$
The subgroup of $P_2^0$
 consisting of matrices of the form $\diag( \alpha ,1 , h, 1,\alpha^{-1}) : \alpha \in GL_1, h \in SL_2$
acts on the space of characters of $Y(F)Z(\A)\backslash Y(\A)$ with 2 orbits.
We claim that the trivial orbit contributes zero.
The term corresponding to the trivial
character is
\begin{equation}\label{example with theta: P3: expansion on Y: contribution from trivial}\begin{aligned}
\int_{P_2^0(F) Z(\A)\bs Sp_6(\A)}
\int_{U'(\A)} \iq{Y}&
\wt\varphi_\pi(y g) \psi_Y^0(y) \; dy\\
\sum_{\xi'' \in F^3} &\omega_\psi( l(u') \varpi_3(g))
\phi(0,0,0,1,\xi'')
f_\tau( w_0u'g,s)
\,du'\,dg,
\end{aligned}
\end{equation}
where $\psi_Y^0$ is the trivial extension of
$\psi_Z$ to a character of $Y.$  In the coordinates
of \eqref{example with theta: P3: def of Y}, it is given by
$\psi_Y^0(y)= \psi( -p).$
\begin{lem}
The integral \eqref{example with theta: P3: expansion on Y: contribution from trivial} vanishes identically.
\end{lem}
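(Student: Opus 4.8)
This is the usual "trivial orbit contribution" vanishing: one shows that, after exploiting the invariance properties of the two non-cuspidal factors in \eqref{example with theta: P3: expansion on Y: contribution from trivial}, the mapping $\wt\vph_\pi \mapsto$ \eqref{example with theta: P3: expansion on Y: contribution from trivial} factors through a constant term of $\wt\vph_\pi$ along a proper parabolic subgroup of $Sp_6$, and hence is zero by cuspidality. Concretely, the plan is to enlarge the unipotent group $Y$ of \eqref{example with theta: P3: def of Y} to a larger unipotent subgroup $Y'$ of $Sp_6$ on which $\psi_Y^0$ still extends to a character, fold the extra integrations into the integrand using the invariances of the theta and Eisenstein factors, and recognize the resulting inner integral over $\quo{Y'}$ as (leading to) a constant term of $\wt\vph_\pi$.

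\textbf{Invariance of the two non-cuspidal factors.} First I would record, viewing everything as a function of $g \in Sp_6(\A)$, which root subgroups $U_\gamma$ of $C \cong Sp_6$ act trivially on the left. For $f_\tau(w_0 u'g,s)$: since $\tau$ is a character, $f_\tau(h,s)$ is left-invariant by the unipotent radical $U_{P_3}(\A)$ and by the derived group of the Levi $M_{P_3}$; therefore $g\mapsto f_\tau(w_0u'g,s)$ is left-invariant by $U_\gamma(\A)$ for every root $\gamma$ of $Sp_6$ such that $w_0 U_\gamma w_0^{-1}$ lies in $U_{P_3}\cdot [M_{P_3},M_{P_3}]$; this list is read off from the reduced word for $w_0$. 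For the theta factor $\sum_{\xi''\in F^3}\omega_\psi(l(u')\varpi_3(g))\phi(0,0,0,1,\xi'')$: the formulas \eqref{E:SchrodingerRep-Formulae}, \eqref{E:WeilRepActionOfSiegelParabolic}, \eqref{Example With a Theta; P2 : varpi3 of SiegelParabolic}, together with \eqref{pqr}, show that it is left-invariant by $U_\gamma(\A)$ for precisely those $\gamma$ whose image $\varpi_3(U_\gamma)$ acts trivially on the partial Schwartz vector $\phi(0,0,0,1,\cdot)$, and that under the center $Z$ of the unipotent radical of $L_{\nu_0}$ it transforms by $\psi_Z$ (which is exactly what makes the $g$-integral over $P_2^0(F)Z(\A)\bs Sp_6(\A)$ well posed).

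\textbf{The key step.} Adjoin to $Y$ the root subgroups $U_\gamma$ that appear on both of the lists above and on which $\psi_Y^0$ has a trivial extension (i.e.\ $[U_\gamma,Y]\subseteq \ker\psi_Y^0$), obtaining a unipotent subgroup $Y'\supseteq Y$ and the trivial extension $\psi_{Y'}$ of $\psi_Y^0$. Using the left-invariance of the theta and Eisenstein factors under these $U_\gamma$, one folds the corresponding integrations back in, so that \eqref{example with theta: P3: expansion on Y: contribution from trivial} equals, up to a positive volume constant, the same expression with $\iq{Y}\wt\vph_\pi(yg)\psi_Y^0(y)\,dy$ replaced by $\iq{Y'}\wt\vph_\pi(y'g)\psi_{Y'}(y')\,dy'$; if necessary one also uses Lemma \ref{Lem: Reduction to theta function on smaller symplectic group-- partial theta} to absorb abelian factors of $U_\O$ into partial theta functions along the way. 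I would choose the roots so that $Y'$ is, after conjugating by a suitable Weyl element of $Sp_6$, the unipotent radical $N_Q$ of a proper standard parabolic subgroup $Q$ of $Sp_6$, with $\psi_{Y'}$ trivial on $[N_Q,N_Q]$; a final Fourier expansion along $N_Q/[N_Q,N_Q]$ then splits the integral into orbits, the non-generic ones collapsing to constant terms of $\wt\vph_\pi$ along smaller parabolics, and any would-be generic piece being killed because $\tau$, being a character, supports no Fourier coefficient beyond the orbit $(2^21^2)$ recorded in the table. In every case the inner integration factors through a constant term of $\wt\vph_\pi$ along a proper parabolic of $Sp_6$, which vanishes, proving the lemma.

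\textbf{Main obstacle.} The only non-formal part is the root bookkeeping in the last step: producing the explicit list of roots to adjoin, checking that $Y'$ is indeed a group with $\psi_Y^0$ extending to it, matching $Y'$ (up to a Weyl conjugation) with the unipotent radical of a genuine parabolic subgroup of $Sp_6$, and verifying that the leftover character on $N_Q/[N_Q,N_Q]$ is sufficiently degenerate. This is routine given the explicit realizations of section \ref{s:MaximalParabolicsOfF4} and the Weil-representation formulas, but it is where the real content lies.
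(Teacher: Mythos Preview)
Your overall strategy---show that the map $\wt\vph_\pi\mapsto$\eqref{example with theta: P3: expansion on Y: contribution from trivial} factors through a constant term of $\wt\vph_\pi$---is exactly the right goal, and it is what the paper achieves. However, the specific mechanism you propose, namely adjoining to $Y$ those root subgroups $U_\gamma$ under which both the partial theta series $\sum_{\xi''\in F^3}\omega_\psi(l(u')\varpi_3(\,\cdot\,))\phi(0,0,0,1,\xi'')$ and $f_\tau$ are left-invariant, cannot succeed here. A direct check using \eqref{Example With a Theta; P2 : varpi3 of SiegelParabolic} and the analogous formulas shows that among the positive roots of $Sp_6$ not already in $Y$, only $U_{0010}$ acts trivially on the partial theta series (for $U_{0001}$ the action permutes $\xi''\in F^3$ by a unipotent map with \emph{adelic} parameter, hence does not preserve the sum; $U_{0100}$ and $U_{0110}$ introduce nontrivial phases depending on $\xi''$). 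The enlarged group $YU_{0010}$ has root set $\{e_2-e_3,\,e_1\pm e_3,\,e_1+e_2,\,2e_1,\,2e_2\}$, which contains two long roots and four short roots; no unipotent radical of any parabolic of $Sp_6$ (standard or not) has this profile or is contained in it. So your ``key step'' stalls: there is no $Y'$ of the kind you describe.

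What the paper does instead is more dynamic. It first factors the integration over $Y(F)Z(\A)\bs Y(\A)\cong\quo{U_{0011}U_{0111}}$ and uses the \emph{nontrivial} action of these two root subgroups on the theta series: $U_{0011}$ translates $\xi_7$, so collapsing the sum over $\xi_7$ with the $U_{0011}$-integration turns it into an integral over $\A$; $U_{0111}$ contributes a phase $\psi(-2r_3\xi_5)$, so Fourier inversion kills the sum over $\xi_5$. Only \emph{after} this simplification does one compute that $U_{0001}$ acts on the remaining expression $\int_\A\sum_{\xi_6}\phi(0,0,0,1,0,\xi_6,r_2)\,dr_2$ merely by shifting the adelic variable $r_2$, hence trivially. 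Factoring off $\quo{U_{0001}}$ then produces the constant term of $\wt\vph_\pi$ along the unipotent radical of the $GL_1\times Sp_4$ parabolic. The essential idea you are missing is that the theta series must first be \emph{simplified} via its nontrivial transformation law under roots already in $Y$ before the crucial extra invariance (under $U_{0001}$) emerges.
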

\begin{proof}  In the proof we shall repeatedly use the
formulae for $\omega_\psi$ given in \eqref{E:SchrodingerRep-Formulae}
and
\eqref{E:WeilRepActionOfSiegelParabolic}.
Set $\wt\varphi_\pi^{(Y, \psi_Y^0)}(g)= \iq Y
\wt\varphi_\pi( y g) \psi_Y^0(y) \; dy.$
Factor the integration over $Y(F)Z(\A)\bs Y(\A),$
which may be identified
with $\quo{U_{0011}U_{0111}}.$  Note that $f_\tau$
and $\wt\varphi_\pi^{(Y, \psi_Y^0)}$ are both
invariant by this group on the left.
One computes
$$
\omega_\psi(\varpi_3( x_{0111}(r_3)x_{0011}(r_2)))\phi(0,0,0,1,\xi_5, \xi_6,\xi_7)
= \psi(-2r_3\xi_5) \phi(0,0,0,1,\xi_5, \xi_6, \xi_7+2r_2),
$$z
deducing that \begin{equation}\label{example with theta p3, psiY0 lemma}
\begin{aligned}
\iiFA2 \sum_{\xi'' \in F^3}
\il_{U'(\A)}&
\omega_\psi( l(u')\varpi_3(x_{0111}(r_3)x_{0011}(r_2) g))\phi
(0,0,0,1,\xi_5, \xi_6, \xi_7)\\&
f_\tau( w_0 u' x_{0111}(r_3)x_{0011}(r_2) g, s)
\wt\varphi_\pi^{(Y, \psi_Y^0)}(x_{0111}(r_3)x_{0011}(r_2) g)
\, du'
\, dr \\
=\iiFA2 \sum_{\xi'' \in F^3}
\il_{U'(\A)}&
\omega_\psi( \varpi_3(x_{0111}(r_3)x_{0011}(r_2)) l(u')\varpi_3(g))\phi
(0,0,0,1,\xi_5, \xi_6, \xi_7)\\&
f_\tau( w_0 x_{0111}(r_3)x_{0011}(r_2) u'  g, s)
\wt\varphi_\pi^{(Y, \psi_Y^0)}(x_{0111}(r_3)x_{0011}(r_2) g)
\, du'
\, dr \\
=\iiFA2 \sum_{\xi'' \in F^3}
\il_{U'(\A)}&
 \psi(-2r_3\xi_5)
\omega_\psi(  l(u')\varpi_3(g))\phi
(0,0,0,1,\xi_5, \xi_6, \xi_7+2r_2)\\&
f_\tau( w_0 x_{0111}(r_3)x_{0011}(r_2) u'  g, s)
\wt\varphi_\pi^{(Y, \psi_Y^0)}(x_{0111}(r_3)x_{0011}(r_2) g)
\, du'
\, dr.\end{aligned}\end{equation}
As indicated in the table above, in this case $\tau$ is a character.
Since $w_0 \cdot 0111$ and $w_0 \cdot 0011$ are both positive, it
follows that $f_\tau( w_0 x_{0111}(r_3)x_{0011}(r_2) u'  g, s)=
f_\tau( w_0u'  g, s).$
Also it follows from the definition of $\wt\varphi_\pi^{(Y, \psi_Y^0)}$
that $\wt\varphi_\pi^{(Y, \psi_Y^0)}(x_{0111}(r_3)x_{0011}(r_2) g)=
\wt\varphi_\pi^{(Y, \psi_Y^0)}(g).$
Collapsing the summation on $\xi_7$ with the integration
on $r_2,$ and
applying Fourier inversion in $r_3$ and $\xi_5,$ we find that \eqref{example with theta p3, psiY0 lemma} equals
$$
\begin{aligned}
\il_{U'(\A)}\il_{\A}\sum_{\xi_6 \in F}&\omega_\psi( l(u')\varpi_3( g))\phi
(0,0,0,1,0, \xi_6, r_2)\,dr_2\, f_\tau( w_0 u'  g, s)
\wt\varphi_\pi^{(Y, \psi_Y^0)}(g)
\, du'
\end{aligned}
$$
Next, $\varpi_3(x_{0001}(r)) = I+re_{34}'+re_{56}'+2re_{67}'+r^2e_{57}',$ hence
$
\omega_\psi(\varpi_3(x_{0001}(r)))\phi(0,0,0,1,0,\xi_6,r_2)
=\phi(0,0,0,1,0,\xi_6,r_2-2r\xi_6),
$ and so
$$
\il_{U'(\A)}\il_{\A}\sum_{\xi_6 \in F}\omega_\psi( l(u')\varpi_3( g))\phi
(0,0,0,1,0, \xi_6, r_2)\,dr_2\, f_\tau( w_0 u'  g, s)\,du'
$$
is invariant by $U_{0001}.$
Factoring the integration, one obtains
an expression for \eqref{example with theta: P3: expansion on Y: contribution from trivial} as an iterated integral,
such that the inner integration is
the constant
term of $\wt\varphi_\pi$ along the unipotent radical of the parabolic
subgroup of $Sp_6$ whose Levi part is $GL_1\times Sp_4.$
 It then follows that the
integral \eqref{example with theta: P3: expansion on Y: contribution from trivial} is zero.
\end{proof}

We continue to study the expansion of \eqref{F4A1P3-Iw0nu0-One}
along $Y(F)\backslash
Y({\A)}.$
Next
we consider the contribution from the nontrivial orbit. Choose the character  $\psi_Y (y)= \psi( a-p)$
(in terms of the coordinates given in \eqref{example with theta: P3: def of Y}) as a representative for this orbit.
Then the stabilizer inside
$GL_1\times SL_2$ as embedded in $Sp_6$ is the semidirect product of
$T_1=\text{diag}\ (\alpha,1,\alpha,\alpha^{-1},1,\alpha^{-1})$, and
the unipotent group $I_6+re_{3,4}$. Thus the  integral \eqref{F4A1P3-Iw0nu0-One} is equal
to
\begin{equation}\label{F4A1P3E1}
\begin{aligned}
\int_{Z(\A)T_1(F)U_{\max}^{Sp_6}(F)\bs Sp_6(\A)}
\int_{U'(\A)}&
\int_{Y(F)\bs Y(\A)}
\wt\varphi_\pi(yg)\psi_Y(y)
\; dy \\
\sum_{\xi'' \in F^3} &\omega_\psi( l(u') \varpi_3(g))
\phi( 0,0,0,1,\xi'')
f_\tau( w_0u'g,s)
 \,du'\,dg.
\end{aligned}
\end{equation}
Here, $U_{\max}^{Sp_6}$ is the standard maximal
unipotent subgroup of $Sp_6.$
Next we factor the integration over
$U_{0111}(F) \bs U_{0111}(\A).$
As a subgroup of
$Sp_6$ this group is
$$\left\{
\bpm
1&0&0&m&0&0\\
0&1&0&0&0&0\\
0&0&1&0&0&m\\
0&0&0&1&0&0\\
0&0&0&0&1&0\\
0&0&0&0&0&1
\epm
\right\}.$$
Note that
$$
\wt\varphi_\pi^{(Y, \psi_Y)}(g):=
\int_{Y(F)\bs Y(\A)}
\wt\varphi_\pi(yg)\psi_Y(y)
\; dy $$
is invariant by this group, as is $f_\tau.$
Using \eqref{Example With a Theta; P2 : varpi3 of SiegelParabolic}, one can check that
$$
\omega_\psi( \varpi_3(x_{0111}(m)))
\phi( 0,0,0,1,\xi_5,\xi_6,\xi_7)
= \psi(-2m\xi_5)
\phi( 0,0,0,1,\xi_5,\xi_6,\xi_7).
$$
Integration over $m$ then picks of the term $\xi_5=0.$
Hence \eqref{F4A1P3E1} equals
$$
\il_{Z_1(\A)T_1(F)U_{\max}^{Sp_6}(F)\bs Sp_6(\A)}
\il_{U'(\A)}
\wt\varphi_\pi\FC Y(g)\sum_{\xi'' \in F^2} \omega_\psi( l(u') \varpi_3(g))
\phi( 0,0,0,1,0,\xi'')
f_\tau( w_0u'g,s) \,du'\,dg,
$$
where $Z_1 = U_{0111}Z \subset Y.$
Next, we consider the group $L:=U_{0010}U_{0011}.$
We compute
$$\varpi_3(x_{0010}(\xi_6)x_{0011}(\xi_7/2))=\bpm h&\\&_th^{-1}\epm,\quad \text{ where }
h=\bpm
1&0&0&0&0&0&0\\
&1&\xi_6&\xi_7/2&\xi_6^2&\xi_6\xi_7/2&\xi_7^2/4\\
&&1&0&2\xi_6&\xi_7/2&0\\
&&&1&0&\xi_6&\xi_7\\
&&&&1&0&0\\
&&&&&1&0\\
&&&&&&1\\
\epm,
$$
Hence, by \eqref{E:WeilRepActionOfSiegelParabolic},
$$
\sum_{\xi_6, \xi_7 \in F}
\omega_\psi( \varpi_3(g)) \phi
(0,0,0,1,0,\xi_6, \xi_7)
=
\sum_{\gamma \in U_{0010}U_{0011}(F)}
\omega_\psi(\varpi_3(\gamma g)) \phi
(\xi_0),
$$
where $\xi_0=(0,0,0,1,0,0,0).$
Collapsing summation with integration, the
above integral is equal to
\begin{equation}\label{example with theta: P3: main integral, expression 1}
\int_{Z_1(\A)T_1(F)V_1(F)\bs Sp_6(\A)}
\int_{U'(\A)}
\wt\varphi_\pi^{(Y,\psi_Y)}(g) \omega_\psi( l(u') \varpi_3(g))
\phi( \xi_0)
f_\tau( w_0u'g,s)
 \,du'\,dg,
\end{equation}
where
$$
V_1 =
\left\{
\bpm
1&a&0&b&c&d\\
&1&0&e&f&*\\
&&1&g&*&*\\
&&&\ddots
\epm\in Sp_6
\right\}$$
(a subgroup of $U_{\max}^{Sp_6}$ complementary to $U_{0010}U_{0011}$).  Here, and in what follows, we exploit the fact that an element
$u$ of $U_{\max}^{Sp_6}$ is determined by the entries $u_{ij}$ with $1 \le i < j \le 7-i.$
Now, it is easily checked that the function
$h \mapsto f_\tau(w_0h)\; (h \in F_4(\A))$ is  left $V_1(\A)$
invariant.  (Here, we identify $V_1$ with a subgroup
of $F_4$ using the identification of $M_4\subset F_4$ with $GSp_6$
given in section \ref{s:MaximalParabolicsOfF4}.)
Using \eqref{E:WeilRepActionOfSiegelParabolic}
and \eqref{Example With a Theta; P2 : varpi3 of SiegelParabolic} one can check that
that the  function $h \mapsto \omega_\psi(h) \phi(\xi_0) \; (h \in \Sp_6(\A))$ is invariant by the groups $U_{0001}(\A),U_{0100}(\A),$ and $U_{0110}(\A).$  It then follows that
$$g \mapsto \int_{U'(\A)} f_\tau( w_0 u' g,s) \omega_\psi(l(u')\varpi_3(g))\phi(\xi_0)\, du'$$ is also invariant by $U_{0001}(\A),U_{0100}(\A),$ and $U_{0110}(\A).$
 The product
of these three groups may be identified with
$Z_1(\A)\bs V_1(\A).$
  Therefore, when we  factor the integration over $V_1(F) Z_1(\A)\bs V_1(\A),$
  the integral \eqref{example with theta: P3: main integral, expression 1} is equal to
$$
\int_{T_1(F) V_1(\A) \bs Sp_6(\A) }
\int_{Y_2(F) \bs Y_2(\A)}
\wt\varphi_\pi( y_2 g) \psi_{Y_2}(y_2) \, dy_2
\int_{U'(\A)}
\omega_\psi(u' \varpi_3(g))\phi(\xi_0)
f_\tau(w_0 u' g) \, du' \, dg,
$$
where
$$Y_2:= \left\{
\bpm
1&a&h&b&c&d\\
&1&0&e&f&*\\
&&1&m&*&*\\
&&&\ddots
\epm\in Sp_6
\right\},\qquad
\psi_{Y_2} \bpm
1&a&h&b&c&d\\
&1&0&e&f&*\\
&&1&m&*&*\\
&&&\ddots
\epm = \psi(h-f).
$$
Using the left
invariance property of $\wt\varphi_\pi$ under elements in $Sp_6(F)$, we
deduce that
$$\begin{aligned}
\wt\varphi_\pi^{(Y_2, \psi_{Y_2})}(g)
&:=
\int_{Y_2(F) \bs Y_2(\A)}
\wt\varphi_\pi( y_2 g) \psi_{Y_2}(y_2) \, dy_2\\
&=
\int_{Y_2(F) \bs Y_2(\A) }
\wt\varphi_\pi(y_2 w[3] g)
\psi_{Y_2}' \, (y_2) dy_2,
\end{aligned}
$$
where $w[3]$ is our standard
representative
in $N_G(T)$ for the simple reflection
$s_{\alpha_3} \in W(G,T),$ and
$\psi_{Y_2}'(y_2) = \psi_{Y_2}( w[3]^{-1} y_2 w[3]).$  One checks that
$$
\psi_{Y_2}' \bpm
1&a&h&b&c&d\\
&1&0&e&f&*\\
&&1&g&*&*\\
&&&\ddots
\epm = \psi(a-g).
$$
Finally, the function
$\wt\varphi_\pi^{(Y_2, \psi_{Y_2}')}$
is invariant by $U_{0010}(F).$
We plug in the Fourier expansion along this group.  The term corresponding to the trivial character vanishes by cuspidality.  The remaining characters are permuted transitively by the action of a one dimensional torus which is conjugate under $w[3]$ to the torus $T_1.$
Thus, in the end we obtain
$$
\int_{V_1(\A) \bs Sp_6(\A)}
\int_{U'(\A)}
 W_{\widetilde\varphi_\pi}(w[3]g)
\omega_\psi( u' \varpi_3(g)) \phi(\xi_0)
f_\tau(w_0u'g) du' dg.
$$
We conclude that this case is preWhittaker.

 \subsection{$\bf P = P_4$}
 The basic data in this case is
$$\begin{aligned}
 w_0&=w[4,3,2,1,3,2,4,3,2,1]\\
 M_{w_0}&\cong GL_1\times GSp_4 \qquad (\Delta\cap \Phi(M_{w_0},T)=\{ \alpha_2, \alpha_3\}), \\
 \nu_0 &= \text{ identity,}\\
  L_{\nu_0}&= C \cap Q_{w_0}\\
 &=\text{the maximal parabolic subgroup of $Sp_6$ with Levi}\cong GL_1\times Sp_4 \\
 &= Sp_4^{\{\alpha_2, \alpha_3\}} \cdot \alpha_4^\vee(GL_1)  \\
   m( w_0 L_{\nu_0} w_0^{-1} )&=Sp_4^{\{\alpha_2, \alpha_3\}}\cdot 2321^\vee(GL_1)
    \subset M\cong \GSpin_7\\
 U^{w_0} &= U_{1122}U_{1222}U_{1232}U_{1242}U_{1342}\\ V&=U_{1000}U_{1100}U_{1110}U_{1120}U_{1220}\\
 &= \text{the unipotent radical of the maximal parabolic subgroup of}  \\
 &\phantom{=} \text{$M \cong \GSpin_7,$ with Levi $\cong GL_1\times \GSpin_5.$}
  \end{aligned}$$
  Here $2321^\vee$ denotes the cocharacter of $T$
  given by
  $$
  2321^\vee(t) = \alpha_1^\vee(t^2)\alpha_2^\vee(t^3) \alpha_3^\vee(t^2) \alpha_4^\vee(t),\; t\in \G_m.
  $$
  \begin{lem}
  \label{conjecture for theta case P4}
  Conjecture \ref{conj1} holds in this case.
  \end{lem}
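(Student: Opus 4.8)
The plan is to follow the template of Lemmas~\ref{lem: conj one for theta case p1} and \ref{conj 1 for theta case P3}. Recall from the discussion preceding the case $P=P_1$ that $P_4\bs F_4/CU_\O = P_4\bs F_4/P_\O = W(M_4,T)\bs W/W(M_1,T)$, that this double coset space is small, and that one may take $\nu_0$ to be the identity. Hence Conjecture~\ref{conj1} reduces to two assertions: that $w_0=w[4,3,2,1,3,2,4,3,2,1]$ is the longest double coset representative, which one checks directly (the three representatives have lengths $0$, $4$, and $10$), and that $I_w=0$ for every other representative $w$, with $I_w$ defined exactly as in Lemma~\ref{lem: conj one for theta case p1}. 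Besides $w_0$ there are just two representatives, $e$ and $w_1:=w[4,3,2,1]$, so everything comes down to treating these two.

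For $w=e$ I would argue as for $w=e, w[1]$ in Lemma~\ref{lem: conj one for theta case p1}. Since $e\cdot 2342 = 2342\in\Phi(U_4,T)$, the section $f_\tau(ug,s)$ is left-invariant by $U_{2342}(\A)$, the root subgroup which is the center of $U_\O$; while $\wt\theta_\phi^\psi(l(\cdot)\varpi_3(g))$ transforms under $U_{2342}$ by the nontrivial character $z\mapsto\psi(z)$, because $l(x_{2342}(z))=(0|0|z)$ is central in $\cH_{15}$ and theta functions carry the standard central character. Factoring the integration over $U_{2342}(F)\bs U_{2342}(\A)$ and using $\int_{F\bs\A}\psi(z)\,dz=0$ gives $I_e=0$.

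For $w=w_1=w[4,3,2,1]$ the situation is genuinely different: one computes $w_1\cdot 2342 = 1220$, a positive root lying in $\Phi(M_4,T)$ rather than in $\Phi(U_4,T)$, so the mechanism above does not apply. Here I would mimic the intermediate cases of Lemmas~\ref{lem: conj one for theta case p1} and \ref{conj 1 for theta case P3}: determine the set of roots of $U_\O$ that $w_1$ carries into $\Phi(U_4,T)$, choose the isomorphism $l:U_\O\to\cH_{15}$ (as in Section~\ref{section: dealing with theta functions}) so that this set occupies the last positions of the ordering, and apply Lemma~\ref{Lem: Reduction to theta function on smaller symplectic group-- partial theta} to replace $\wt\theta_\phi^\psi$ by a partial theta function $\wt\theta_{\phi|_k}^\psi$ for the appropriate $k$. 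After this reduction the integrand should be left-invariant under the unipotent radical $N$ of the proper parabolic subgroup $Q_{w_1}^0=C\cap w_1^{-1}P_4 w_1$ of $C\cong Sp_6$: the partial theta function is invariant by construction, and $h\mapsto f_\tau(w_1 h,s)$ is invariant because $w_1$ conjugates $N$ into $U_4(\A)$. Factoring the integration over $N$ then yields a constant term of the cusp form $\wt\vph_\pi$, so $I_{w_1}=0$.

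The hard part will be precisely this last case, for a reason already present in Lemma~\ref{lem: conj one for theta case p1}: since $\tau$ is not a character for $P=P_4$ (the table shows it is attached to the orbit $(31^4)$ of $\GSpin_7$), the invariance of $f_\tau(w_1\,\cdot\,,s)$ under all of $N$ may only become visible after a supplementary Fourier expansion along one or two further root subgroups, the surviving terms being killed using the vanishing of the Fourier coefficients of $\tau$ attached to orbits strictly dominating $(31^4)$. This bookkeeping — pinning down $Q_{w_1}^0$, the root set on which to base the partial-theta reduction, and the extra Fourier expansion forced by $\tau$ not being a character — is the only step that is not purely formal, and it runs parallel to, but is shorter than, the corresponding step in Lemma~\ref{lem: conj one for theta case p1}.
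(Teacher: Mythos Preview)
Your outline is correct and matches the paper's proof almost exactly: three double cosets, the identity killed by the center argument, and $w_1=w[4,3,2,1]$ handled by a partial-theta reduction followed by cuspidality along the unipotent radical of $Q_{w_1}^0$.

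The only inaccuracy is your final paragraph. The ``hard part'' you anticipate does not occur. The unipotent radical $N$ of $Q_{w_1}^0=C\cap w_1^{-1}P_4w_1$ is, by definition, contained in $M_1\cap w_1^{-1}U_4w_1$; hence $w_1Nw_1^{-1}\subset U_4$ holds automatically, and $h\mapsto f_\tau(w_1h,s)$ is left-$N(\A)$-invariant for \emph{any} $\tau$, with no appeal to $\mathcal{O}(\tau)=(31^4)$ and no supplementary Fourier expansion. (You already said exactly this in your second paragraph; the hedge in the third paragraph contradicts it.) The paper's proof simply orders the roots so that the four roots $1231,1232,1242,1342$ sent by $w_1$ into $\Phi(U_4,T)$ sit at the end, applies Lemma~\ref{Lem: Reduction to theta function on smaller symplectic group-- partial theta} to pass to $\wt\theta_{\phi|_3}^\psi$, and observes that both $\wt\theta_{\phi|_3}^\psi(\varpi_3(\,\cdot\,))$ and $f_\tau(w_1\,\cdot\,,s)$ are $N(\A)$-invariant, whence cuspidality of $\wt\varphi_\pi$ kills $I_{w_1}$. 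The argument is thus strictly shorter than your proposal suggests; the orbit information on $\tau$ is used only later, in the analysis of $I_{w_0,\nu_0}$ itself (Propositions~\ref{I0and I00 vanish} and \ref{invariance of ftau}), not in this lemma.
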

\begin{proof}
The set $P\bs H/ CU_\O$ has only three elements, represented by the
identity, $w_0,$ and $w:=[4,3,2,1].$  The identity maps $2342$ into $U$
which proves that the term corresponding to this double coset is zero.
 Let $Q_{w}^0= Sp_6 \cap w^{-1} P w.$  It is the standard maximal parabolic subgroup of $Sp_6$ with Levi subgroup isomorphic to $GL_2\times SL_2.$

 Fix the homomorphisms $l:U_\O \to \cH_{15}$ and $\varpi_3:Sp_6 \to Sp_{14}$ by ordering the roots of $U_\O/(U_\O,U_\O)$ as follows
 $$
 1000, 1100;\; \;1110, 1111; \; 1120, 1220, 1121| 1221, 1122, 1222; \; 1231, 1231; \; 1242, 1342.
 $$
 The last four roots are sent to $\Phi(U,T)$ by $w.$  Write $V_w$ for
 the corresponding unipotent subgroup.  Then applying lemma \ref{Lem: Reduction to theta function on smaller symplectic group-- partial theta}
 we have
 $$
 I_w =
 \il_{Q_w^0(F) \bs Sp_6(\A)}
 \wt\vph_\pi(g)
 \il_{U_\O^w(F)V_w(\A)\bs U_\O(\A)}
 f_\tau(wug,s)\wt\theta_{\phi|_{3}}^\psi(l(u)\varpi_3(g))
 \, du\,dg.
 $$
 Further, both $g \mapsto f_\tau(wg,s)$ and $g\mapsto \wt\theta_{\phi|_{3}}^\psi(\varpi_3(g))$ are invariant by the unipotent radical of $Q_w^0,$
 so $I_w$ vanishes by cuspidality.\end{proof}

To analyze $I_{w_0, \nu_0},$ it is more convenient to
 define the isomorphism $l: U\to \cH_{15},$
and the embedding $\varpi_3: Sp_6 \to Sp_{14}$ by
 ordering of the roots of $T$ in $U/(U,U),$ thus:
$$
1000, 1100, 1110, 1120, 1220;  \quad 1111, 1121| 1221, 1231; \quad1122, 1222, 1232, 1242, 1342.
$$(Recall that changing the isomorphism $l$ changes the bijection
$\phi \to \wt\theta_\phi^\psi$ between Schwartz functions and theta functions, but \ul{not} the space of functions obtained.)
It follows  that
\begin{equation}l^{-1}( 0|y|0)
=x_{1221}\left(-\frac{y_1}2\right)x_{1231}\left(\frac{y_2}2\right)x_{1122}(-y_3)x_{1222}(y_4)x_{1232}\left(-\frac{y_5}2\right)x_{1242}(y_6)x_{1342}(-y_7),
\label{eq: U -> cH15, example with theta, P4}
\end{equation}
for $y=(y_1, y_2, \dots , y_7).$
Having identified $C$ with $Sp_6,$ the group $C\cap M_2$ is identified
with $\{ \diag( \alpha, g, \alpha^{-1}) \subset Sp_6: \alpha \in GL_1, g \in Sp_4\}.$
If $g \in Sp_4,$ we may write
$\varpi_3(g) = \diag( \wedge_0^2 (g) , g', \,_t(\wedge^2_0(g))),$ where
$\wedge_0^2 (g)$ denotes matrix for $g$ acting on the five-dimensional
fundamental representation of $Sp_4$ with respect to a suitable basis,
and $g' = tgt^{-1},$ for $t=\diag(-2,-2,1,1).$

We may then write
$$
I_{w_0, \nu_0}=
\il_{U_{w_0}(\A)\bs U(\A)}
\il_{(C\cap P_2)(F)\bs C(\A)}
\iiFA5\wt\varphi_{\pi}(g)
\wt \theta_\phi^\psi (
(0|0,y|0)u_0
 \varpi_3(g) ) f_\tau\left(v(y) w_0u_0 g,s\right)\, dy
\, dg\, du_0,
$$
where
\begin{equation}\label{F4A1P4 def of v(y)}
v( -y_1, -y_2, -y_3, -y_4, -y_5)=x_{1000}(y_1) x_{1100}(y_2) x_{1110}\left(\frac{y_3}2\right) x_{1120}(-y_4) x_{1220}(y_5)
\end{equation}
satisfies $l(w_0^{-1} v(y) w_0)=(0|0,0,y|0)\in \cH_{15},$ for $y=(y_1, \dots, y_5).$
The image of $v(y)$
under the
isomorphism $M\to \GSpin_7$ fixed in section \ref{s:MaximalParabolicsOfF4} and the
natural homomorphism from $\GSpin_7$
onto $SO_7$ is $$
\bpm
1&y_1&y_2&y_3/2&y_4&y_5&*\\
&1&&&&&-y_5\\
&&1&&&&-y_4\\
&&&1&&&-y_3/2\\
&&&&1&&-y_2\\
&&&&&1&-y_1\\
&&&&&&1
\epm \in SO_7.
$$
We now expand $f_\tau$ along the group $V.$
The group $Sp_4^{\{\alpha_2, \alpha_3\}}(F)\cdot 2321^\vee(GL_1(F))$ acts on the characters of $\quo V$ with infinitely many orbits.
Indeed, the space of characters of $\quo V$ may be identified
with the $F$ points of the rational representation of $Sp_4^{\{\alpha_2, \alpha_3\}}(F)\cdot 2321^\vee(GL_1(F))$
which is dual to the quotient of $V$ by its commutator subgroup
$(V,V).$  We denote this space by $V/(V,V)^*.$  Then
$2321^\vee(GL_1)$ acts linearly on $V/(V,V)^*,$
and
the action of $Sp_4^{\{\alpha_2, \alpha_3\}}$ on $V/(V,V)^*$ can be identified with the action of
$SO_5$ on its standard representation (regarding $Sp_4$ as $\Spin_5$).
Identify $V/(V,V)^*$ with column vectors and let $Q$ denote the nondegenerate
$\Spin_5$-invariant quadratic form on this space.
Then the orbits for $Sp_4 \cdot GL_1$ are as follows:  one orbit consists of the zero vector, and
a second corresponds to all  vectors $ x \ne 0$ such that $Q(x)=0.$
The vectors $x$ with $Q(x) \ne 0$ split into infinitely many orbits
parametrized by square classes in $F^\times.$  That is,
  two vectors $x_1, x_2$ with $Q(x_1)Q(x_2) \ne 0$
lie in the same orbit if and only if $Q(x_1)/Q(x_2)$ is a square.
The zero vector of course corresponds to the trivial character.
As representatives for the remaining orbits we take
$$
\psi_V^0(v(y)) = \psi( y_1), \qquad\psi_V^1(v(y)) = \psi( y_3), \qquad \text{ and }\qquad
\{ \psi_V^{a}(v(y))= \psi(y_2-ay_4)\},
$$
where $a$ ranges over a set of representatives for the nonzero, nonsquare  square
classes in $F.$  (If $a$ is a square, then $\psi_V^a,$ defined as
above, is equivalent to $\psi_V^1.$)
The stabilizer of $\psi_V^0$
 in $Sp_4^{\{\alpha_2, \alpha_3\}}(F)\cdot 2321^\vee(F^\times)$  is $SL_2^{\alpha_3}(F)\cdot2421^\vee(F^\times)U_{0100}U_{0110}U_{0120}(F),$
with the cocharacter $2421^\vee$ being defined in the
same way that $2321^\vee$ was defined above.
The space $SL_2^{\alpha_3}(F)\cdot2421^\vee(F^\times)U_{0100}U_{0110}U_{0120}(F)\bs Sp_4^{\{\alpha_2, \alpha_3\}}(F)\cdot 2321^\vee(F^\times)$
can be identified with $P_3^0(F) \bs (C\cap P_2)(F),$ where
$$
P_3^0 \mapsto
\left\{
\bpm a&*&*&*&*\\
&a&*&*&*\\
&&h&*&*\\
&&&a^{-1}&*\\
&&&&a^{-1}
\epm a \in GL_1, h \in SL_2
\right\}\subset Sp_6.
$$
The stabilizer of $\psi_V^1$ in $Sp_4^{\{\alpha_2, \alpha_3\}}(F)\cdot 2321^\vee(F^\times)$
is isomorphic to the semidirect product of $SL_2\times SL_2$ and a group of order $2$ which acts by reversing the factors.  Specifically, the stabilizer
contains $SL_2^{\alpha_2}, SL_2^{0120},$
and suitable (nonstandard) representatives for the simple reflection $w[3]$ in $W(G,T).$

If $a$ is not square, then the stabilizer of $\psi_V^a$ in $Sp_4^{\{\alpha_2, \alpha_3\}}(F)\cdot 2321^\vee(F^\times)$
also has two components, but
the identity component is isomorphic not
to $SL_2^2,$ but to $\Res_{F(\sqrt{a})/F}SL_2.$
Here $F( \sqrt{a})$ denotes the
unique quadratic extension of $F$ in which $a$ is a square,
and $\Res_{F(\sqrt{a})/F}$ denotes
restriction of scalars.
If we identify $M_1$ with $GSp_6$
as in section \ref{s:MaximalParabolicsOfF4},
then $Sp_4^{\{\alpha_2, \alpha_3\}}(F)\cdot 2321^\vee(F^\times)$
corresponds to all matrices of the
form
$$
\alpha \cdot \bpm 1&&\\&g&\\&&1\epm, \qquad \alpha \in GL_1, \;g \in Sp_4,
$$
while the identity component of the  stabilizer of $\psi_V^a$ consists of all matrices of the form
$$
\bpm 1&&\\&g&\\&&1\epm, \qquad
g = \bpm
g_1& g_2&g_3& g_4 \\
g_2 a & g_1&g_4 a&g_3 \\
g_5& g_6&g_7& g_8 \\
g_6 a & g_5&g_8 a&g_7
\epm, \qquad \bm g_1g_7+ag_2g_8 - g_3g_5-ag_4g_6=1,\\ \\
g_1g_8+g_2g_7-g_3g_6-g_4g_5=0.
\em
$$
A representative for the second connected component
of the stabilizer in this case
is $$\diag(-1,-1,1,1,-1,-1),$$
which acts on the identity component in a manner which
corresponds to the mapping $h \mapsto \, _t\bar h^{-1},$
with $\bar{\phantom{h}}$ being the action of the
nontrivial element of $\Gal(F(\sqrt{a})/F).$

From this discussion, it follows that
$$
I_{w_0, \nu_0}= I_{w_0, \nu_0}^{00}+I_{w_0, \nu_0}^{0}+
I_{w_0, \nu_0}^{1}+
\sum_{{a \in F^\times/F^{\times,2}}\atop{a\ne 1}}
I_{w_0, \nu_0}^a,
$$
where
 $$I_{w_0, \nu_0}^{00}
=
\il_{U_{w_0}(\A)\bs U(\A)}
\il_{(C\cap P_2)(F)\bs C(\A)}
\iiFA5\wt\varphi_{\pi}(g)
\wt \theta_\phi^\psi  (
(0|0,y|0)u_0
 \varpi_3(g) )
 f_\tau^V\left(v(y) w_0u_0 g,s\right)
\, dy
\, dg\, du_0,
$$
$$
f_\tau^V(h,s)= \iiFA5f(v(y')h,s)\, dy',
$$
$$I_{w_0, \nu_0}^0
=
\il_{U_{w_0}(\A)\bs U(\A)}
\il_{P_3^0(F)\bs C(\A)}
\iiFA5\wt\varphi_{\pi}(g)
\wt \theta_\phi^\psi (
(0|0,y|0)u_0
 \varpi_3(g) )
 f_\tau^{(V,\psi_V^0)}\left(v(y) w_0u_0 g,s\right)
\, dy
\, dg\, du_0,
$$
$$\begin{aligned}I_{w_0, \nu_0}^1
=\frac12
\il_{U_{w_0}(\A)\bs U(\A)}
\il_{SL_2^{\alpha_2}(F)SL_2^{0120}(F)(C \cap U_2)(F)\bs C(\A)}&
\wt\varphi_{\pi}(g) \\
\iiFA5&
\wt \theta_\phi^\psi ( (0|0,y|0)u_0\varpi_3(g) )
 f_\tau^{(V,\psi_V^1)}\left(v(y) w_0u_0 g,s\right)
\, dy
\, dg\, du_0,\end{aligned}
$$
$$\begin{aligned}I_{w_0, \nu_0}^a
=
\il_{U_{w_0}(\A)\bs U(\A)}
\il_{S_a(F)(C \cap U_2)(F)\bs C(\A)}&
\wt\varphi_{\pi}(g) \\
\iiFA5&
\wt \theta_\phi^\psi ( (0|0,y|0)u_0\varpi_3(g) )
 f_\tau^{(V,\psi_V^a)}\left(v(y) w_0u_0 g,s\right)
\, dy
\, dg\, du_0,\end{aligned}
$$
Where $S_a$ is the stabilizer of $\psi_V^a,$ described above, and
$$
f_\tau^{(V, \psi_V^i)}(h,s)= \iiFA5f_\tau(v(y')h,s)\psi_V^i(v(y))\, dy',
\qquad (i=0,1,a ).
$$

(The integral $I_{w_0, \nu_0}^1$ can also be
expressed as an integral over
$
U_{w_0}(\A)\bs U(\A)\times S_1(F)(C \cap U_2)(F)\bs C(\A),
$
where $S_1$ is
the full stabilizer of $\psi_V^1$ in $(C \cap P_2).$  In the expression above, the
factor $\frac12$ is present because the group
$SL_2^{\alpha_2}SL_2^{0120}(C \cap U_2)$ is of index
two in this stabilizer.)

Now, it follows from the table in the end of section 4 that $\mathcal{O}(\tau)=(31^4).$
For the next step, we shall use this fact to prove that the integrals $I_{w_0,\nu_0}^{00}$ and $I_{w_0,\nu_0}^0$ both vanish.

\begin{prop}\label{I0and I00 vanish}
Since $\tau$ is attached to the orbit $(31^4),$
$I_{w_0,\nu_0}^{00} = I_{w_0,\nu_0}^0=0.$
\end{prop}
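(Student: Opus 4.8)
The plan is to handle $I_{w_0,\nu_0}^{00}$ and $I_{w_0,\nu_0}^{0}$ in parallel, reducing each, after disposing of the $y$-integration, to an integral which factors through a constant term of the cusp form $\wt\varphi_\pi$ along a proper parabolic subgroup of $C\cong Sp_6$; the vanishing then follows from cuspidality of $\wt\varphi_\pi$. Two facts drive the argument. First, the character of $V$ occurring in these two terms is degenerate --- trivial for $I^{00}$, and attached to a nonzero \emph{isotropic} vector for $I^{0}$. Regarded as a subgroup of $M_4\cong\GSpin_7$, the group $V$ is exactly the unipotent subgroup attached by the construction of Section \ref{section: construction of fourier coefficients} to the orbit $(31^4)$ (its roots are the positive roots of $M_4$ with positive $\alpha_1$-coefficient, and $(31^4)$ has weighted diagram with $2$ on $\alpha_1$ and $0$ elsewhere), so a Fourier coefficient of $f_\tau$ along $V$ with a character in general position is attached to $(31^4)$, while the functionals appearing here --- the constant term $f_\tau^V$ and the degenerate coefficient $f_\tau^{(V,\psi_V^0)}$ --- are genuinely smaller. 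Second, we will use the hypothesis $\O(\tau)=(31^4)$: after a further Fourier expansion of $f_\tau^V$ or $f_\tau^{(V,\psi_V^0)}$, any term which is a Fourier coefficient of $f_\tau$ attached to an orbit strictly larger than $(31^4)$, e.g.\ $(3^21)$ or $(51^2)$, vanishes on the space of $\tau$ and may be discarded.

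First I would do the $y$-integration. In $I^{00}$ the $V$-character is trivial, so $f_\tau^V$ is left $V(\A)$-invariant, $v(y)$ disappears, and $\iiFA5\wt\theta_\phi^\psi((0|0,y|0)u_0\varpi_3(g))\,dy$ collapses by Lemma \ref{Lem: Reduction to theta function on smaller symplectic group-- partial theta}(i) to the partial theta function $\wt\theta_{\phi|_2}^\psi(u_0\varpi_3(g))$, which by part (iii) of that lemma is left invariant on $Sp_{14}$ by the unipotent radical of the maximal parabolic with Levi $\cong GL_5\times Sp_4$. In $I^{0}$, abelianness of $V$ gives $f_\tau^{(V,\psi_V^0)}(v(y)w_0u_0g,s)=\psi_V^0(v(y))^{-1}f_\tau^{(V,\psi_V^0)}(w_0u_0g,s)$, so the $y$-integral is $\iiFA5\psi_V^0(v(y))^{-1}\wt\theta_\phi^\psi((0|0,y|0)u_0\varpi_3(g))\,dy$; using \eqref{E:SchrodingerRep-Formulae}, \eqref{E:WeilRepActionOfSiegelParabolic} and the explicit form of $\varpi_3$ restricted to $V$, the four unweighted variables collapse the theta series to a lower partial theta while the single $\psi_V^0$-twisted variable introduces a shift in one Schwartz argument. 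It is precisely the isotropy of $\psi_V^0$ that makes this shift compatible with the collapse, so that the outcome is again a partial theta function left invariant by a unipotent radical in $Sp_{14}$.

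It remains to show that, for fixed $u_0$, the resulting integrand in $g$ is left invariant by the unipotent radical $N$ of a proper parabolic subgroup of $C\cong Sp_6$, with $N$ contained in $C\cap P_2$ (for $I^{00}$) resp.\ in $P_3^0$ (for $I^{0}$); granting this, the outer $g$-integral over $(C\cap P_2)(F)\bs C(\A)$ resp.\ $P_3^0(F)\bs C(\A)$ factors through $\iq N\wt\varphi_\pi(ng)\,dn$, which is zero because $\wt\varphi_\pi$ is cuspidal. The invariance of the theta factor is pulled back via $\varpi_3$ from the $Sp_{14}$-invariance found above; the invariance of the $f_\tau$-factor comes from the tautological left $U_4(\A)$-invariance of an Eisenstein section induced from $P_4$ together with the left $V(\A)$-invariance of the constant term (resp.\ the character identity for $f_\tau^{(V,\psi_V^0)}$), supplemented, if necessary, by the extra invariance obtained after a further Fourier expansion by discarding the large-orbit terms as above. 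Verifying that these two sources of invariance together cover a full unipotent radical $N$ of the required kind --- which amounts to tracking which root subgroups of $U_4$ and $V$ are carried by conjugation by $w_0$, and by $\varpi_3$, into the relevant stabilizing subgroups, using the realization and structure constants of Sections \ref{s:MaximalParabolicsOfF4} and \ref{section: dealing with theta functions} --- is the main technical obstacle; once $N$ has been identified the conclusion is immediate.
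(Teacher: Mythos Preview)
Your treatment of $I_{w_0,\nu_0}^{00}$ is correct and matches the paper: after the $y$-integration the theta collapses to $\wt\theta_{\phi|_2}^\psi$, which is invariant by the unipotent radical of the $GL_5\times Sp_4$-parabolic of $Sp_{14}$; since $\varpi_3$ maps the unipotent radical of $C\cap P_2$ into that radical and $f_\tau^V(w_0u_0g,s)$ is likewise invariant, one factors out a constant term of $\wt\varphi_\pi$.

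For $I_{w_0,\nu_0}^0$ your plan has a genuine gap. After the $y$-integration the relevant object is the partial theta $\sum_{\xi\in F^2}[\omega_\psi(u_0\varpi_3(g))\phi](0,0,0,0,-1,\xi)$. You assert that the integrand will be left invariant by the $\A$-points of a full unipotent radical $N$ of a proper parabolic of $Sp_6$; this fails. The group $U_{0001}U_{0011}\subset N_1$ acts on this partial theta by \emph{shifting} $\xi$ rather than fixing it, so the sum is invariant only under $F$-points, not $\A$-points, and no amount of extra invariance of $f_\tau^{(V,\psi_V^0)}$ can compensate for this. The $Sp_{14}$-invariance you invoke (by the radical of the $GL_4\times Sp_6$-parabolic) pulls back via $\varpi_3$ to a subgroup of $Sp_6$ that is not the unipotent radical of any parabolic.

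What the paper does instead is a two-stage argument. First factor the integration over the three-dimensional subgroup $N_2=U_{0111}U_{0121}U_{0122}$, where the partial theta \emph{is} $\A$-invariant, and then Fourier-expand $\wt\varphi_\pi^{N_2}$ along $\quo{U_{0001}U_{0011}}$. The constant term in this expansion yields the full $N_1$-constant term of $\wt\varphi_\pi$ and dies by cuspidality. For the nonconstant terms, the $\omega_\psi$-action of $U_{0001}U_{0011}$ collapses the $\xi$-sum to a single evaluation $\phi(0,0,0,0,-1,0,-1/2)$, which \emph{is} invariant by $U_{0010}U_{0110}U_{0120}$. Only now does the hypothesis $\O(\tau)=(31^4)$ enter: one proves (Proposition~\ref{invariance of ftau}) that $f_\tau^{(V,\psi_V^0)}$ is invariant by $U_{0100}U_{0110}U_{0120}(\A)$ by expanding along this group and showing every nonconstant term is a Fourier coefficient attached to $(51^2)$ or $(3^21)$. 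Combining these invariances one finally obtains the constant term of $\wt\varphi_\pi$ along the unipotent radical of the $GL_2\times SL_2$-parabolic of $Sp_6$. Your sketch anticipates the role of $(31^4)$ in gaining extra invariance of $f_\tau$, but misses that one must first expand $\wt\varphi_\pi$ to collapse the theta sum before any single-step invariance argument can succeed.
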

\begin{proof}
We have  $$I_{w_0,\nu_0}^{00}=
\il_{U_{w_0}(\A)\bs U(\A)}
\il_{(C\cap P_2)(F)\bs C(\A)}
\wt\varphi_{\pi}(g)
\tilde \theta^\psi_{\phi|_2} (
u_0
 \varpi_3(g) ) f_\tau\left(v(y) w_0u_0 g,s\right)\, dy
\, dg\, du_0.
$$
The parabolic subgroup $C \cap P_2$ of $C \cong Sp_6$
maps into the standard parabolic subgroup of $Sp_{14}$ whose
Levi subgroup is isomorphic to $GL_5 \times Sp_4,$ with the unipotent radical of $C\cap P_2$ mapping into the unipotent radical of this parabolic subgroup in $Sp_{14}.$  The function $  \tilde \theta^\psi_{\phi|_2}$ is invariant by the
unipotent radical of the parabolic subgroup of $Sp_{14}(\A)$
(which lifts into $\Sp_{14}(\A)$) and so is $f_\tau\left(w_0u_0 g,s\right),$ so this term vanishes by cuspidality
of $\wt \vph_\pi.$

Next consider the integral $I_{w_0,\nu_0}^{0}$.
Changing variables, and using the formulas for the action of the
Weil representation, we have$$
\begin{aligned}
\iiFA 5
\wt\theta_\phi^\psi((0|0y|0)u_0\varpi_3(g) )
 f_\tau^{(V, \psi_V^0)}&\left(v(y) w_0u_0 g,s\right)\, dy\\
 &=f_\tau^{(V, \psi_V^0)}\left( w_0u_0 g,s\right)\iiFA 5
\wt\theta_\phi^\psi((0|0y|0)u_0\varpi_3(g) )\psi(y_1)
 \, dy\\
&=f_\tau^{(V, \psi_V^0)}\left( w_0u_0 g,s\right)
\sum_{\xi \in F^2} [\omega_\psi
( u_0\varpi_3(g) )\phi](0,0,0,0,-1,\xi).
\end{aligned}
$$
Hence, the integral
$I_{w_0,\nu_0}^{0}$ is equal to$$
\il_{U_{w_0}(\A)\bs U(\A)}
\il_{(C\cap P_2)(F)\bs C(\A)}
\wt\varphi_{\pi}(g)
\sum_{\xi \in F^2} [\omega_\psi
( u_0\varpi_3(g) )\phi](0,0,0,0,-1,\xi) f_\tau^{(V, \psi_{V}^0)}\left( w_0u_0 g,s\right)
\, dg\, du_0.
$$
Next, let $N_1$ denote the unipotent
radical of $(C\cap P_2)$ and
factor the integration over this group.
Since $w_0 N_1 w_0^{-1} \subset U_4,$ the function
$f_\tau^{(V, \psi_{V}^0)}$ is invariant by $N_1(\A).$

Now consider the partial theta function $
\wt\theta_1(h):=
\sum_{\xi \in F^2} [\omega_\psi
(h )\phi](0,0,0,0,-1,\xi),$
with $h \in \Sp_{14}(\A).$
We claim that the function
 $n_1 \mapsto \wt\theta_1( \varpi_3(n_1)h)$ with $n_1 \in N_1(\A), h \in \Sp_{14}(\A),$
 depends only on the inner $6\times 6$ block
  of $\varpi_3(n_1).$
  To see this,
  let $Q_4$ denote the standard parabolic subgroup  $Sp_{14}$ with Levi factor isomorphic to $GL_4 \times Sp_{6}.$
  Note that $\varpi_3(n_1)$
  is
upper triangular, and hence contained in $Q_4.$
Now, the function $\wt\theta_1$ is invariant on the left by the unipotent
radical of $Q_4,$ and  satisfies $\wt\theta_1( \diag( g, I_6,\, _tg^{-1}) h)
= \gm_{\det g} \wt\theta_1(h)$ for all $g \in GL_4(\A)$ and
$h \in \Sp_{14}(\A).$
This follows from the same argument used to prove lemma \ref{Lem: Reduction to theta function on smaller symplectic group-- partial theta}.
The upper left $4\times 4$ block of $\varpi_3(n_1)$ will be unipotent,
and hence have determinant one.  It follows that
the function
 $n_1 \mapsto \wt\theta_1( \varpi_3(n_1)h)$ with $n_1 \in N_1(\A), h \in \Sp_{14}(\A),$
 depends only on the inner $6\times 6$ block
  of $\varpi_3(n_1),$ as claimed.

Each element of $N_1(\A)$ can be written uniquely
as
 $$
n_1(r)=x_{0001}(r_{0001})x_{0011}(r_{0011})x_{0111}(r_{0111})x_{0121}(r_{0121})x_{0122}(r_{0122}),$$
and the projection of $\varpi_3(n_1(r))$ as above onto
$Sp_6$ is
$$
\bpm 1&0&0&-2r_{0001}&-2r_{0011}&0\\0&1&0&0&0&-2r_{0011}\\0&0&1&0&0&-2r_{0001}\\0&0&0&1&0&0\\0&0&0&0&1&0\\0&0&0&0&0&1\epm.$$
It follows that
$$[\omega_\psi(u_0 \varpi_3(n_1(r)g))\phi](0,0,0,0,-1,\xi_6,\xi_7)
=\psi(2 \xi_7 r_{0001} + 2 \xi_6 r_{0011})
[\omega_\psi(u_0 \varpi_3(g))\phi](0,0,0,0,-1,\xi_6,\xi_7).
$$
First, factor the integration over
$N_2:=U_{0111}U_{0121}U_{0122}.$  Thus $I_{w_0, \nu_0}^0$
is equal to
$$\il_{U_{w_0}(\A)\bs U(\A)}
\il_{(C\cap P_2)(F)N_2(\A)\bs C(\A)}
\wt\varphi_{\pi}^{N_2}(g)
\sum_{\xi \in F^2} [\omega_\psi
( u_0\varpi_3(g) )\phi](0,0,0,0,-1,\xi) f_\tau^{(V, \psi_{V}^0)}\left( w_0u_0 g,s\right)
\, dg\, du_0,$$
where
$$\wt\varphi_{\pi}^{N_2}(g)=\iq{N_2}
\wt\varphi_{\pi}(n_2g)\, dn_2.$$
Now expand $\wt\varphi_{\pi}^{N_2}(g)$ along
$U_{0001}U_{0011}.$
The term corresponding to the trivial character
produces a constant term of $\wt\varphi_{\pi},$
and therefore vanishes by cuspidality.
The group $SL_2^{\alpha_3}$
permutes the nontrivial characters of this group transitively.
Choose $x_{0001}(r_{0001})x_{0011}(r_{0011})
\mapsto \psi(r_{0001})$ as a representative.
Then the stabilizer is $U_{0010}.$
Now factoring the integration over $\quo{U_{0001}U_{0011}}$
picks off a single term in the sum over $\xi,$
 and we have
 $$\begin{aligned}
 \il_{U_{w_0}(\A)\bs U(\A)}
\il_{(C\cap P_2)(F)N_1(\A)\bs C(\A)}
&\wt\varphi_{\pi}^{(N_1,\psi_{N_1})}(g) f_\tau^{(V, \psi_{V}^0)}\left( w_0u_0 g,s\right)
\\
& [\omega_\psi
( u_0\varpi_3(g) )\phi](0,0,0,0,-1,0,-1/2)\, dg\, du_0,
\end{aligned}$$
where $N_1,$ as above,
is the unipotent radical of $C \cap P_2$ and can also be
described as
$U_{0001}U_{0011}N_2,$
and
$$\wt\varphi_{\pi}^{(N_1,\psi_{N_1})}(g)
=\iq{N_1}\wt\varphi_{\pi}(ng) \, \psi_{N_1}(n)\, dn
\qquad \psi_{N_1}(n) = \psi( n_{0001}), \; n \in N_1(\A).
$$
Next, the projection of
$\varpi_3( x_{0010}(r_1) x_{0110}(r_2) x_{0120}(r_3))$
onto $Sp_6$ is given by
$$\bpm 1&0&0&0&0&0\\0&1&r_1&2r_2&2(r_1r_2-r_3)&0\\0&0&1&0&2r_2&0\\0&0&0&1&-r_1&0\\0&0&0&0&1&0\\0&0&0&0&0&1\epm.$$
It follows that
$$
[\omega_\psi
( u_0\varpi_3(g) )\phi](0,0,0,0,-1,0,-1)
$$
is left-invariant by $U_{0010}U_{0110}U_{0120}.$
To show that $I_{w_0, \nu_0}^0=0,$ we will prove the following.

\begin{prop}
\label{invariance of ftau}
Since $\tau$ is attached to $(31^4),$ the function
 $$f_\tau^{(V, \psi_V^0)}(g,s):=\iq V f_\tau(vg,s) \psi_V^0(v) \, dv$$
 is left-invariant by $U_{0100}U_{0110}U_{0120}(\A).$
 \end{prop}

 Assuming proposition \ref{invariance of ftau},
 the proof of proposition \ref{I0and I00 vanish} is as follows:
 we obtain as inner integration the constant term
$\widetilde{\varphi}_\pi^L$ where $L$ is the unipotent radical of
the maximal standard parabolic subgroup of $Sp_6$ whose Levi part is
$GL_2\times SL_2$. By cuspidality this is zero.

 \begin{proof}[Proof of proposition  \ref{invariance of ftau}]
 Put $N_3=U_{0100}U_{0110}U_{0120}.$
 Expand $f_\tau^{(V, \psi_V^0)}(g,s)$ along $\quo{N_3}.$
 Under the action of $SL_3^{\alpha_3},$ there are infinitely many orbits of characters, represented by
 $$
 n_3(r_1, r_2, r_3) \mapsto 1, \quad
 n_3(r_1, r_2, r_3) \mapsto \psi(r_3),\text{ and } \quad
 n_3(r_1, r_2, r_3) \mapsto \psi(r_1+ar_3), a \in F^\times
 $$
 respectively, where $n_3(r_1, r_2, r_3)=x_{0100}(r_1) x_{0110}(r_2) x_{0120}(r_3).$
 Now,  for any $a \in F^\times,$
 \begin{equation}\label{F4A1P4 Proposition FC for 511}
 \iiFA 3 f_\tau^{(V, \psi_V^0)} \left(n_3(r_1, r_2, r_3) g,\, s\right)\psi(r_1+ar_3)\,dr
 \end{equation}
 is a Fourier coefficient attached to the orbit
 $(51^2)$ which is greater than $(31^4).$
 Hence integral \eqref{F4A1P4 Proposition FC for 511}
is zero.

In order to prove that
 \begin{equation}\label{F4A1P4 Proposition -- Integral we need to relate to the FC 331}
 \iiFA 3 f_\tau^{(V, \psi_V^0)} \left(n_3(r_1, r_2, r_3) g, \, s\right)\psi(r_3)\,dr
 \end{equation}
 vanishes, we shall relate it to  Fourier coefficients
 attached to the unipotent orbit $(3^21),$ which is also
 greater than $(31^4).$
 Let $N_4$ denote the unipotent subgroup of $M \cong \GSpin_7$ which projects to
 $$
 \left\{ \pr(n_4)=\bpm
1&0&r_1&r_2&r_3&*&*\\
&1&r_4&r_5&r_6&*&*\\
&&1&0&0&-r_6&-r_3\\
&&&1&0&-r_5&-r_2\\
&&&&1&-r_4&-r_1\\
&&&&&1&0\\
&&&&&&1\\
 \epm \right\}
 \subset SO_7.$$
Since this projection is an isomorphism on $N_4,$ we can use the matrix entries above to define characters of $N_4.$
An integral
$$
\iq{N_4} f_\tau(n_4 g,\,s) \psi\left( \sum_{i=1}^6 {a_i} r_i \right) \, dn_4
$$
is a Fourier coefficient attached to $(3^21)$ if and only if the character $n_4 \mapsto \psi\left( \sum_{i=1}^6 {a_i} r_i \right)$ is in general position, which is the case if and only if the matrix
$\bspm
a_1&a_2&a_3\\
a_4&a_5&a_6\espm$ has rank $2.$
Now let $N_4^0$ denote the subgroup of $N_4$ defined by setting $r_4=r_5=0.$  Use $r_1, r_2, r_3, r_6$ as
coordinates on $N_4^0$ in the obvious way.  We claim that
\begin{equation}\label{F4A1P3 UniPer on N40}
\iq{N_4^0} f_\tau\left(n_4^0 g,\,s\right) \psi\left( a_1r_1+a_6r_6 \right) \, dn_4^0
\end{equation}
vanishes identically whenever $a_1a_6\ne 0.$  Indeed, this integral is left-invariant by the $F$-points of two dimensional subgroup $U_{0010}U_{0110}$ corresponding to the omitted variables $r_4$ and $r_5.$  One may perform a Fourier expansion along this group, and every term will produce a Fourier coefficient of $f_\tau$ attached to $(3^21).$
Thus integral \eqref{F4A1P3 UniPer on N40} is zero for all choice of data, whenever $a_1a_6\ne 0.$

In \eqref{F4A1P3 UniPer on N40}, use the left invariance property of
$f_\tau$ to obtain $f_\tau(n_4^0g,s)=f_\tau(w[2]n_4^0g,s)$.
Conjugating $w[2]$ to the right, we obtain an integral
over $w[2]\quo{N_4^0}w[2]^{-1}.$
This integral is clearly zero
for all choice of data, and, for suitable
nonzero $a_1,a_6,$ is also an inner integration to integral
\eqref{F4A1P4 Proposition -- Integral we need to relate to the FC 331}. Thus we conclude that integral \eqref{F4A1P4 Proposition -- Integral we need to relate to the FC 331} is zero for all choice of
data.
 \end{proof}
\end{proof}
 We now treat $I_{w_0,\nu_0}^1.$
Let
$$\psi_{U^{w_0}}^1((0|0,0,y|0))= \psi(-y_3), \qquad y = (y_1, y_2,y_3,y_4,y_5) \in \A^5$$
be the unique character of $U^{w_0}(\A)$ such that
$\psi_{U^{w_0}}^1(u) \psi_V^1(v(w_0uw_0^{-1}))$is trivial.

  Using the action of the Weil
representation, we obtain that
$$\int\limits_{(F\backslash {\bf
A})^5}\widetilde{\theta}_\phi^\psi((0|0,y|,0) u \wt h)\psi_{U^{w_0}}^1(y)dy=
\sum_{\xi \in F^2} \omega_\psi(u\wt h) \phi(0,0,-1,0,0,\xi),
$$ for all  $\wt h \in \Sp_{14}(\A), u \in \cH_{14}(\A).$
 Denote the left hand side by
$\wt\theta_\phi^{(U^{w_0}, \psi_{U^{w_0}}^1)}(u\wt h).$

Changing variables
in $f_\tau ^{V,\psi_V^1}$  then proves that
$$\begin{aligned}I_{w_0, \nu_0}^1
=\frac12
\il_{U_{w_0}(\A)\bs U(\A)}\,&
\il_{SL_2^{\alpha_2}(F)SL_2^{0120}(F)(C \cap U_2)(F)\bs C(\A)}
\wt\varphi_{\pi}(g) \\
&\qquad \qquad \qquad \qquad
\wt\theta_\phi^{(U^{w_0}, \psi_{U^{w_0}}^1)} (u_0\varpi_3(g) )
 f_\tau^{(V,\psi_V^1)}\left(w_0u_0 g,s\right)
\, dg\, du_0.\end{aligned}
$$

Next we factor the integration over $N_1:=U_{0001}U_{0011}U_{0111}U_{0121}U_{0122}.$
This group is the unipotent radical of a standard parabolic subgroup of $C \cong Sp_6,$ having Levi subgroup isomorphic to $GL_1 \times Sp_4.$  Further, it is isomorphic to the Heisenberg group $\cH_5.$

Set
 $$
n_1'(r)=x_{0111}(r_{0111})x_{0121}(r_{0121})x_{0122}(r_{0122}),\;  n_1''(r) = x_{0001}(r_{0001})x_{0011}(r_{0011}).$$
Then one may compute $\varpi_3( n_1'(r))$
and  $\varpi_3( n_1''(r)).$
Each will be of the form $\bspm u&b&c \\ & h & b'\\ && _tu^{-1} \espm,$
with $u \in SL_2, h \in Sp_{10}, b \in \Mat_{2 \times 10},$ etc.,
and it follows from  lemma \ref{Lem: Reduction to theta function on smaller symplectic group-- partial theta}, that  we only need the middle $10\times 10$
block, $h.$
The projection of $\varpi_3(n_1'(r))$ onto $Sp_{10}$ is
$$
\bpm I_5&X'(r)\\&I_5 \epm, \qquad
X'(r) = \bpm2 r_{0111}& 2 r_{0121}& 0& 0& -2 r_{0122}\\0& -2 r_{0111}& 0& 0&
  0\\0& 0& 0& 0& 0\\0& 0& 0& -2 r_{0111}& 2 r_{0121}\\0& 0& 0&
  0& 2 r_{0111}\epm,
$$
whence, by the action of the Weil
representation,
$$
[\omega_\psi( \varpi_3( n_1'(r)))\phi](0,0,-1,0,0,\xi_1,\xi_2)
=\psi(-r_{0122}-2\xi_1r_{0121}-2\xi_2r_{0111})
\phi(0,0,-1,0,0,\xi_1,\xi_2).
$$
The projection of $\varpi_3(n_1''(r))$ onto $Sp_{10}$ is
$$
\bpm I_5&X_1''(r)\\&I_5 \epm
\bpm h_1''(r)&\\&_th_1''(r)^{-1} \epm, \qquad
h_1''(r):=\bpm 1& 0& 0& r_{0001}& r_{0011}\\0& 1& 0& 0& r_{0001}\\0& 0& 1&
  0& 0\\0& 0& 0& 1& 0\\0& 0& 0& 0& 1\epm,$$
$$X_1''(r):=\bpm 0& 0& -2 r_{0001} r_{0011}& 0& 0\\0& 0& -r_{0001}^2& 0&
  0\\-2 r_{0001}& -2 r_{0011}&
  0& -r_{0001}^2&- 2 r_{0001} r_{0011}\\0& 0& -2 r_{0011}& 0& 0\\0&
   0& -2 r_{0001}& 0& 0\epm,
$$
whence
$$
[\omega_\psi( \varpi_3( n_1''(r)))\phi](0,0,-1,0,0,\xi_1,\xi_2)
=
\phi(0,0,-1,0,0,\xi_1-r_{0001},\xi_2-r_{0011}).
$$
Define $\phi_0 \in S(\A^2)$
by $\phi_0(x_1,x_2) = \phi(0,0,-1,0,0,x_1,x_2).$
Then the above shows that
\begin{equation}\label{F4A1P4 theta to theta on Sp4}
[\omega_\psi\circ \varpi_3(n_1)]\phi(0,0,-1,0,0,x_1,x_2) = [\omega_{\psi_{-2}}^{(4)}\circ \iota(n_1)]\phi_0(x_1,x_2).
\end{equation}
for a suitable isomorphism $\iota:N_1 \to \cH_{5}.$
Here $\omega^{(4)}_{\psi_{-2}}$ denotes the Weil representation of $\cH_5(\A) \rtimes \Sp_{4}(\A)$
defined using the additive character $\psi_{-2}(x) = \psi(-2x)$
in place of $\psi.$
By uniqueness of the extension of $\omega_{\psi_{-2}}^{(4)}$
from $ \cH_{5}$ to $\cH_5(\A) \rtimes \Sp_{4}(\A),$
it follows that \eqref{F4A1P4 theta to theta on Sp4}
 holds for $g \in \Sp_4(\A)$ (i.e., the preimage
 in $\wt C(\A)$ of $Sp_4^{\{\alpha_2, \alpha_3\}}(\A)$) as well.

Now, the identity component of the stabilizer of $\psi_{U^{w_0}}^1$ in
$Sp_4^{\{\alpha_2, \alpha_3\}}$ is $SL_2^{\alpha_2} SL_2^{0120}.$  In order to complete our analysis
of this case, we show that $f_\tau^{(U^{w_0}, \psi^1_{U^{w_0}})}$ is left-invariant by the $\A$ points of this group.
To prove that, and also the
analogous statement related to
the integrals $I_{w_0,\nu_0}^a,$ we  prove the
following.

 \begin{prop}
 Since $\tau$ is attached to $(31^4),$
 the function
 $$f_\tau^{(V, \psi_V^a)}(g, s)
 =\iq{V} f_\tau(vg,s) \psi_V(v)\, dv
 $$
 satisfies
 $$
 f_\tau^{(V, \psi_V^a)}( hg,s) = f_\tau^{(V, \psi_V^a)}(g,s) \qquad( \forall g \in C(\A), \text{ and } h \in S_a(\A)),
 $$
 for all $a \ne 0,$
 where $S_a$ is the stabilizer of $\psi_V^a$ in $Sp_4^{\{\alpha_2, \alpha_3\}}.$
 \end{prop}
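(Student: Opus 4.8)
The plan is to follow the same strategy used to prove Proposition \ref{invariance of ftau}, namely to realize each generator of $S_a$ as (a representative of) a Weyl group element that can be conjugated past $f_\tau^{(V,\psi_V^a)}$ at the cost of replacing the integration domain $V$ by a unipotent group on which the relevant character is no longer in general position, and then to perform a Fourier expansion producing only Fourier coefficients attached to orbits strictly larger than $(31^4)$. These all vanish since $\mathcal O(\tau) = (31^4)$. The key point is that $S_a$ is generated by $SL_2^{\alpha_2}$-type and $SL_2^{0120}$-type subgroups (when $a$ is a square, so $S_a$ is, up to components, $SL_2 \times SL_2$; when $a$ is a nonsquare, the identity component is $\Res_{F(\sqrt a)/F}SL_2$, which over $\overline F$ still becomes $SL_2\times SL_2$, so it suffices to prove invariance over $\overline F$, hence for the generators of these two $SL_2$'s). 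Since invariance by a connected unipotent group together with the torus $T \cap S_a$ and the Weyl elements $w[\alpha_2], w[0120]$ generating the two $SL_2$'s implies invariance by all of $S_a(\A)$, it is enough to check: (i) invariance by the unipotent radicals $U_{0100}$-type subgroups inside $S_a$; (ii) invariance by the representatives of the simple reflections $w[\alpha_2]$ and $w[0120]$.

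First I would make explicit, using the $SO_7$-realization of $M_4 \cong \GSpin_7$ fixed in section \ref{s:MaximalParabolicsOfF4}, the matrix action of $Sp_4^{\{\alpha_2,\alpha_3\}}\cdot 2321^\vee(GL_1)$ on $V/(V,V)$, identifying it with $SO_5$ acting on its standard five-dimensional representation as in the discussion preceding the proposition, and pin down which root subgroups of $C$ (equivalently of $M_4$) constitute the unipotent radical of $S_a$ and which Weyl elements represent the $SL_2$-factors. For part (i): an element $h$ of the unipotent radical of $S_a$ fixes $\psi_V^a$, so conjugating it past $f_\tau^{(V,\psi_V^a)}$ replaces $V$ by $hVh^{-1}$, still a unipotent subgroup of $M_4$ of the same dimension containing the relevant root subgroup; the difference $hVh^{-1}$ versus $V$ lies inside $(V,V)$ and higher root spaces, so after a Fourier expansion along the "new" directions (as in the treatment of \eqref{F4A1P3 UniPer on N40}, which is the model computation), each nonconstant term is a Fourier coefficient attached to an orbit $\ge (3^21)$ or $\ge (51^2)$, hence vanishes, and the constant term gives back $f_\tau^{(V,\psi_V^a)}$; this yields the desired invariance. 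For part (ii): use the left $F$-invariance of $f_\tau$ to write $f_\tau^{(V,\psi_V^a)}(g,s) = f_\tau^{(V,\psi_V^a)}(w g, s)$ for $w = w[\alpha_2]$ or $w[0120]$, then conjugate $w$ to the right, converting the $V$-integral into an integral over $wVw^{-1}$; since $\psi_V^a$ is $w$-stable (as $w \in S_a$) the new integrand is again a Fourier coefficient, and one shows it vanishes for all choices of data while simultaneously identifying it, for the needed parameter values, with an inner integration of the original expression — exactly the $w[2]$ manoeuvre at the end of the proof of Proposition \ref{invariance of ftau}.

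The main obstacle I anticipate is the bookkeeping in the nonsquare case: one must either argue cleanly that invariance over $\overline F$ suffices (which is fine, since $f_\tau^{(V,\psi_V^a)}(hg,s) = f_\tau^{(V,\psi_V^a)}(g,s)$ is a polynomial identity in $h \in S_a$, so it holds over $\A$ once it holds over $\overline F$, and over $\overline F$ the group $\Res_{F(\sqrt a)/F}SL_2$ becomes $SL_2 \times SL_2$), or else work directly with the $F$-rational structure and the explicit $4\times 4$ matrices $g = \bigl(\begin{smallmatrix} g_1 & g_2 & g_3 & g_4\\ g_2 a & g_1 & g_4 a & g_3\\ g_5 & g_6 & g_7 & g_8\\ g_6 a & g_5 & g_8 a & g_7\end{smallmatrix}\bigr)$ given just before the proposition. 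I would take the first route. A secondary nuisance is that all the matrix computations for the Weil representation twists ($\varpi_3$ of the relevant root subgroups) and for the action on $V/(V,V)$ must be done concretely with the structure constants of \cite{Gilkey-Seitz}; these are routine but error-prone, so I would organize them by recording $\varpi_3$ on each of the $SO_7$-root subgroups $U_{0100}, U_{0110}, U_{0120}, U_{\alpha_2}, U_{0121}, U_{0122}$ once and reusing them. Modulo these calculations, the vanishing of every term in each Fourier expansion is immediate from $\mathcal O(\tau) = (31^4)$ and the partial ordering of nilpotent orbits of $\GSpin_7$ (equivalently of $SO_7$), together with \cite{G-R-S1} Theorem 2.1 to handle specialness where needed.
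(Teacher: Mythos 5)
Your high-level plan---reduce $S_a(\A)$-invariance to a Fourier-expansion argument that produces only Fourier coefficients which are forbidden by $\O(\tau) = (31^4)$---is in the right spirit, and your instinct to reuse the machinery of Proposition \ref{invariance of ftau} is sound. However, several of the individual steps are either incoherent or go in a direction that contradicts the intended conclusion.

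First, your treatment of the unipotent part does not work as written. You propose to conjugate $h$ past $f_\tau^{(V,\psi_V^a)}$ and argue that the ``difference between $hVh^{-1}$ and $V$'' lies in $(V,V)$ and higher root spaces, and then Fourier expand along these ``new directions.'' But $V$ is the unipotent radical of a maximal parabolic of $\GSpin_7$ and is abelian, so $(V,V)$ is trivial; more importantly, $S_a$ normalizes $V$ and fixes $\psi_V^a$ by definition, so $hVh^{-1}=V$ and $\psi_V^a\circ \Ad(h) = \psi_V^a$ identically, and there \emph{are} no new directions. Conjugation by $h$ alone is vacuous and produces nothing to expand. The correct manoeuvre, which the paper carries out, is to Fourier expand $f_\tau^{(V,\psi_V^a)}$ along $L(F)\bs L(\A)$ where $L = S_a\cap U_{0100}U_{0110}U_{0120}$ \emph{itself} (using only the free $L(F)$-invariance), and then to show that every nonconstant term vanishes. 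Each nonconstant term has, as an inner integration, the coefficient $f_\tau^{(R_1,\psi_{R_1})}$ with $R_1 = V_1 L$ (here $V_1$ is $V$ with $U_{1000}$ removed) and $\psi_{R_1}$ nontrivial on $L.$ The group $R_1$ is codimension one in the unipotent radical $N_4$ of the $GL_2\times \GSpin_3$-parabolic, and every extension of $\psi_{R_1}$ to $N_4$ is in general position, so the further expansion along $N_4/R_1$ lands entirely in $(3^21)$-coefficients, which vanish. This is where the dimension count and the rank condition from before \eqref{F4A1P3 UniPer on N40} enter, not in a direct ``conjugation'' step.

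Second, your step (ii) --- invariance by the Weyl representatives $w[\alpha_2], w[0120]$ --- is both unnecessary and, as stated, wrong. These representatives lie in $S_a(F)\subset H(F),$ so left $H(F)$-invariance of $f_\tau$ together with the fact that $S_a$ normalizes $(V,\psi_V^a)$ gives $f_\tau^{(V,\psi_V^a)}(\gamma g,s) = f_\tau^{(V,\psi_V^a)}(g,s)$ for all $\gamma\in S_a(F)$ \emph{automatically}, by the standard change-of-variable argument. Your proposal to ``show the new integrand vanishes for all choices of data'' would, since $wVw^{-1}=V$ and $\psi_V^a$ is $w$-stable, be asking to show that $f_\tau^{(V,\psi_V^a)}$ itself vanishes identically---which would trivialize the entire construction. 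The $w[2]$ manoeuvre you are thinking of from the proof of Proposition \ref{invariance of ftau} is applied to a \emph{different} integral, \eqref{F4A1P3 UniPer on N40}, which is an auxiliary coefficient whose vanishing is needed to kill a \emph{nonconstant} term of a Fourier expansion; it is not applied to the Fourier coefficient under investigation.

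Third, the ``check invariance over $\overline F$'' reduction for the nonsquare case is not a valid argument. Invariance of $f_\tau^{(V,\psi_V^a)}$ under $S_a(\A)$ is an assertion about functions on adele groups, not a polynomial identity on $\overline F$-points, and the representation-theoretic input (vanishing of Fourier coefficients attached to $(3^21)$) is genuinely adelic. The paper avoids this issue entirely via the observation that $S_a(\A)$ is generated by $S_a(F)$ and $L(\A)$; since $L$ is a two-dimensional $F$-unipotent group this works uniformly in $a,$ whether or not $a$ is a square, and no descent or base-change argument is needed.

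In summary: the decomposition ``$F$-points plus unipotent $\A$-points,'' the Fourier expansion along that unipotent group, and the reduction of nonconstant terms to $(3^21)$-coefficients via the codimension-one inclusion $R_1\subset N_4$ are the missing pieces, and they are exactly what the paper supplies. The ingredients you reach for (expansion, forbidden orbits, the $w[2]$-type conjugation) all appear in the paper's proof, but your proposal misassigns their roles.
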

 \begin{proof}
The group $S_a(\A)$ is generated by $S_a(F)$ and the two dimensional maximal
unipotent subgroup $L(\A):=S_a(\A)\cap U_{0100}U_{0110}U_{0120}(\A),$
so it suffices to prove that $f_\tau^{(V, \psi_V^a)}$ is invariant
by this unipotent group.
Clearly, we may expand $f_\tau^{(V, \psi_V^a)}$
along $\quo{L}.$  We claim that every term other than the constant
term vanishes.

Recall that, by hypothesis, $\tau$ is
attached to the unipotent orbit $(31^4),$ and thus does not
support any Fourier coefficient which is attached to the
orbit $(3^21).$  The set of such  Fourier coefficients was described before
equation \eqref{F4A1P3 UniPer on N40}.

Let $V_1$ denote the product  of all groups $U_{\alpha} \subset V$
except for $U_{1000}.$  Let $N_4$ denote the unipotent radical of the
maximal parabolic subgroup of $\GSpin_7$ with Levi isomorphic
to $GL_2\times \GSpin_3,$ and
$R_1$ denote the product of $V_1$
and $L.$  Then $R_1$ is a codimension $1$ subgroup of $N_4.$
To be precise:
there is a column vector $c_a=(1,0,-a)$ or $(0,1,0),$ such that
$$\psi_{V}^a \bpm 1&x&*\\&I_5&-_t x \\&&1\epm
= \psi\left(\bpm x_2&x_3&x_4 \epm \cdot c_a \right),$$
and
$$
R_1 = \left\{ \bpm I_2&X&*\\&I_3&-_tX\\ &&I_2 \epm :
X= \bpm x_1&x_2&x_3\\x_4&x_5&x_6 \epm,
\; \bpm x_4&x_5&x_6 \epm \cdot c_a=0\right\}.
$$
Here we identify unipotent elements of $M_4$ with their images in
$SO_7.$
This follows from the fact that $I_7 + re'_{12} \in V$ and
$L$ preserves $\psi_V^a.$  Now, let $\psi_{R_1}$ be
any character such that $\left.\psi_{R_1} \right|_{V_1(\A)}=\left.\psi_{V} \right|_{V_1(\A)},$ and $\left.\psi_{R_1}\right|_{L(\A)}$ is nontrivial.
We claim that
$$
f_\tau\FC{R_1}:=\iq{R_1} f_\tau(r_1 g,s)\psi_{R_1}(r_1) \, dr_1.
$$vanishes identically.
Indeed, one may expand $
f_\tau\FC{R_1}$ on $R_1(\A)N_4(F) \bs N_4(\A)$ and obtain an
expression as a sum of terms, each of which is an integral
on $\quo{N_4}$ against a character whose restriction to $R_1$ is $\psi_{R_1}.$  But it is clear from the remarks above that every character of $N_4$
which restricts to $\psi_{R_1}$ is in general position.  As explained
before equation \eqref{F4A1P3 UniPer on N40}, such an integral
is a Fourier coefficient attached to $(3^21),$ and therefore vanishes
identically.
  \end{proof}
  It follows that with the notation as above
  $$\begin{aligned}
  I_{w_0, \nu_0}^1
  = \frac12\il_{U^{w_0}(\A)\bs U(\A)}
  \il_{N_1(F)SL_2^{\alpha_2}(F) SL_2^{0120}(F)\bs C(\A)}
 & \wt\vph_\pi(g) f_\tau\FC V(u_0g,s)\\
&  \sum_{\xi \in F^2}
 [\omega_\psi(u_0 \varpi_3(g))\phi](0,0,-1,0,0,\xi_6, \xi_7)
  \, dg
  \, du_0.\end{aligned}
  $$
 The inner period in this case is
 $$
 \iq{\cH_5} \iq{SL_2}\iq{SL_2}
 \wt\vph_\pi \left(u \bpm 1&&\\&\alpha(g_1, g_2)&\\&&1 \epm \right)
 \theta_\phi^{\psi_2} (u\alpha(g_1, g_2))
 \,dg_1\,dg_2
 \,du,
 $$
 where $ \wt\vph_\pi $ is a genuine cusp form
 $\Sp_6(\A) \to \C,$ $\theta_\phi^{\psi_2}$ is a
theta function on $\cH_5(\A) \rtimes \Sp_4(\A)$
defined using the character $\psi_2(x) = \psi(2x)$
and we have identified
$\cH_5$ with
$$
\left\{ \bpm1&X&y\\&I_4&X' \\&&1 \epm \right\} \subset Sp_6.
$$
Also
$\alpha: SL_2\times SL_2 \to Sp_4$ is given by
$$
\alpha\left(\bpm a& b \\ c& d \epm , g_2\right)
= \bpm a& &b\\&g_2&\\c&&d\epm.
$$

The analysis of $I_{w_0, \nu_0}^a$ nonsquare is similar.
Let
$$\psi_{U^{w_0}}^a((0|0,0,y|0))= \psi(-y_2+ay_4)$$
be the unique character of $U^{w_0}(\A)$ such that
$\psi_{U^{w_0}}^1(u) \psi_V^1(v(w_0uw_0^{-1}))$ is trivial.
Then the integral over $\quo{V}$ picks off a partial theta series
$$
\theta_\phi^{(U^{w_0}, \psi_{U^{w_0}}^1)}(h) :=
\sum_{\xi \in F^2} \omega_\psi(h) \phi(0,-1,0,a,0,\xi).
$$

With notation as before, similar computations show that
$$
[\omega_\psi( \varpi_3( n_1'(r)))\phi](0,-1,0,a,0,\xi_1,\xi_2)
=\psi(-ar_{0122}+2a\xi_1r_{0111}-2\xi_2r_{01s1})
\phi(0,-1,0,a,0,\xi_1,\xi_2),
$$
whence
$$
[\omega_\psi( \varpi_3( n_1''(r)))\phi](0,-1,0,a,0,\xi_1,\xi_2)
=
\phi(0,-1,0,a,0,\xi_1+r_{0011},\xi_2-ar_{0011}).
$$
This is still essentially the
action of the Weil representation of $\cH(\A)\rtimes \Sp_4(\A).$
Determined by the character $\psi_{-2a}(x) = \psi( -2ax).$

We conclude that this case does not appear to be of open orbit
type.

\section{Appendix}
The purpose of this appendix is to elaborate slightly on the
earlier remark that the Fourier coefficient $\varphi\FC{U_\O}$
of an automorphic form $\varphi$ will not, in general, be
an automorphic form, because it will not, in general, be $\frak z$-finite.
This should be something that one can prove using purely local
methods, but here it is more convenient to point out that the
existence of an integral representation which involves some Fourier
coefficient tends to suggest that the Fourier coefficient is far from
$\mathfrak{z}$-finite.  Indeed, it is, in a sense, spread out all over
the spectrum of the operators in $\mathfrak{z}.$

To illustrate this, consider the integral representation for the exterior
square $L$ function of a cuspidal automorphic representation of $GL_{2n}(\A)$, which was
given in \cite{Jacquet-Shalika}.  This construction
consists of integrating $\varphi\FC{U_\O}$ against an
Eisenstein series $E(g,s)$ defined on the group $GL_n,$ with $\varphi$
itself being a cuspform in the space of an irreducible cuspidal
automorphic representation of the group $GL_{2n}.$  The function
$\varphi\FC{U_\O}$ inherits the rapid decay of the cusp form $\vph.$
Hence the integral
$$
\il_{Z(\A)\quo{GL_n}} \varphi\FC{U_\O}(g) E(g,s)\, dg
$$
is absolutely convergent for all $s$ where $E(g,s)$ has no poles,
uniformly for $s$ in a compact set.
Reversing order and making a change of variable,
$$\begin{aligned}
\il_{Z(\A)\quo{GL_n}} \lim_{t\to 0}&\varphi\FC{U_\O}(g\exp (tX)) E(g,s)\, dg
\\& =\il_{Z(\A)\quo{GL_n}} \varphi\FC{U_\O}(g) \lim_{t\to 0}E(g\exp (tX),s)\, dg
\end{aligned}
$$
for all $X \in \frak{gl}(2, \R).$  Consequently, even though $E(g,s)$
is not $L^2,$ one may still think if this integral as a pairing $\la, \ra$ with
respect to which the elements of the center of the universal
enveloping algebra are ``self-adjoint.''
Now for concreteness, assume $n=2$ and $F=\Q,$  and take $\Delta$
the Laplace-Beltrami operator.
Then $\Delta E(g,s) = s(1-s)$ for all $s \in \C.$
Suppose that $\vph\FC{U_\O}$ is
$\mathfrak z$-finite.  Then it is killed by a polynomial $P(\Delta)$ in $\Delta.$
But then
$$0 = \la P(\Delta) \vph, E( \cdot, s) \ra
= \la \vph,  P(\Delta) E( \cdot, s) \ra
=P( s(1-s) ) \la \vph, E( \cdot, s)\ra
$$
for all values of $s.$ Since $P(s(1-s))$ has only finitely many zeros, and
$ \la \vph, E( \cdot, s)\ra$ is nonzero for $\Re(s)$ large, we have a contradiction.

\end{document}